\newtheorem{thm}{Theorem}[section]
\newtheorem{lem}[thm]{Lemma}
\newtheorem{proposition}[thm]{Proposition}
\newtheorem{corollary}[thm]{Corollary}
\newtheorem{assumption}[thm]{Assumption}
\newtheorem{definition}[thm]{Definition}
\newtheorem*{definition*}{Definition}
\newtheorem*{thm*}{Theorem}
\newtheorem{remark}[thm]{Remark}
\newcommand{\bequ}{\begin{equation}}
\newcommand{\eequ}{\end{equation}}
\def\bn{\begin{eqnarray*}}\def\en{\end{eqnarray*}}
\def\bu{\begin{equation}}\def\eu{\end{equation}}
\newcommand{\card}{\mbox{Card}}
\newcommand{\supp}{\mbox{supp}}
\newcommand{\Var}{\mbox{Var}}
\newcommand{\Atan}{\mbox{atan2}}
\def\N{\mathbb{N}}
\def\R{\mathbb{R}}
\def\L{\mathbb{L}}
\def\E{\mathbb{E}}
\def\ind{1}
\begin{document}

\begin{frontmatter}
\title{Adaptive warped kernel estimation for \\ nonparametric regression with \\ circular responses \support{T.D.N. was supported by a public grant as part of the Investissement d’avenir project, reference ANR-11-LABX-0056-LMH, LabEx LMH.}
}
\runtitle{Adaptive warped kernel estimation for nonparametric circular regression}

\begin{aug}
\author{\fnms{Tien Dat} \snm{Nguyen}
\ead[label=e1]{tiendat.nguyen.math@gmail.com}} 
\address{Laboratoire  de Math\'ematiques, UMR 8628, Universit\'e Paris Sud, 91405 Orsay Cedex France \\
\printead{e1}}
\and

\author{\fnms{Thanh Mai} \snm{Pham Ngoc}
\ead[label=e2]{phamngoc@math.univ-paris13.fr}}

\address{LAGA, Institut Galil\'ee, CNRS, UMR 7539, Universit\'e Sorbonne Paris Nord, 93430, Villetaneuse,  France \\
\printead{e2}}

\and

\author{\fnms{Vincent} \snm{Rivoirard}
\ead[label=e3]{Vincent.Rivoirard@dauphine.fr}}

\address{Ceremade, CNRS, UMR 7534, Universit\'e Paris-Dauphine, PSL Research University, 75016 Paris,
	France \\
\printead{e3}\\
}

\runauthor{Nguyen et al.}

\end{aug}

\begin{abstract}
In this paper, we deal with nonparametric regression for circular data, meaning that observations are represented by points lying on the unit circle. We propose a kernel estimation procedure with data-driven selection of the bandwidth parameter. For this purpose, we use  a warping strategy combined with a Goldenshluger-Lepski type estimator. To study optimality of our methodology, we consider the minimax setting and prove, by establishing upper and lower bounds, that our procedure is nearly optimal on anisotropic H\"older classes of functions for pointwise estimation. The obtained rates also reveal the specific nature of regression for circular responses. Finally, a numerical study is conducted, illustrating the good performances of our approach.
\end{abstract}

\begin{keyword}[class=MSC]
\kwd[Primary ]{62G08}
\kwd{62H11}
\end{keyword}

\begin{keyword}
\kwd{circular data, nonparametric regression, warping method,  kernel rule, adaptive minimax estimation, Goldenshluger-Lepski procedure}
\end{keyword}
\tableofcontents
\end{frontmatter}
\section{Introduction}\label{regression.circular:sec:Introduction}
Directional statistics is the branch of statistics which deals with observations that are directions. In this paper, we will consider more specifically \textit{circular data} which arises whenever using a periodic scale to measure observations. These data are represented by points lying on the unit circle of $\mathbb{R}^2$ denoted in the sequel by $\mathbb{S}^1$.  Circular data are collected in many research fields,  for example in ecology (animal orientations), earth sciences (wind, ocean current directions, cross-bed orientations to name a few), medicine (circadian
rhythm), forensics (crime incidence) or social science (clock or calendar effects). Various comprehensive surveys on statistical methods for circular data can be found in Mardia and Jupp~\cite{book:Mardia-Jupp}, Jammalamadaka and
SenGupta~\cite{book:Jammalamadaka-SenGupta}, Ley and Verdebout~\cite{book:Ley-Verdebout} and recent advances are collected in Pewsey and Garc\'ia-Portugu\'es~\cite{Pewsey-GarciaPortuge}. Note that the term \textit{circular data} is also used to distinguish them from data supported on the real line $\R$ (or some subset of it), which henceforth are referred to as \textit{linear data}.

In the present work, we focus on a nonparametric regression model with a circular response and linear predictor. Assume that we have an independent identically distributed (i.i.d in the sequel) sample $\left\{ (X_{i},\Theta_{i}) \right\}_{i=1}^{n}$ distributed as $(X,\Theta)$, where $\Theta$ is a circular random variable, i.e. $\Theta\in\mathbb{S}^1,$ and $X$ is a random variable  with density $f_{X}$ supported on $ \mathbb{R}$.
We assume that the cumulative distribution function of $X$, denoted $F_{X},$ 
is known. We also assume that $F_X$ is invertible on $\R$ meaning that $f_{X}$ is positive on $\R$ and $F_{X}(\R) = (0,1)$. We aim at estimating a function $m$ which contains the dependence structure between the predictors $X_i$ and the observations $\Theta_i$. For our setting, described in Section~\ref{sec:framework}, the regression function $m$ is derived in Equation~\eqref{formula:regress-function.m}.
 

Regression with circular response and linear covariates has been first and mostly explored from a parametric point of view. Pioneered contributions are due to Gould~\cite{Gould}, Johnson and Wehlry~\cite{Johnson-Wehlry} or Fisher and Lee~\cite{Fisher-Lee:1992}. The latter proposed the most popular link-based function (namely the function $2 \arctan$) to model the conditional mean. Major difficulties, among others of such link-based models  involve computational drawbacks to estimate parameters  as identified by Presnell et al.~\cite{Presnell-Morrison-Littel}. Presnell et al.~\cite{Presnell-Morrison-Littel}  in turn suggested alternatively a spherically projected multivariate linear model. Since then, numerous parametric approaches have been proposed, we refer the reader to all the references in Pewsey and Garc\'ia-Portugu\'es~\cite{Pewsey-GarciaPortuge}.  In order to get a more flexible approach, nonparametric paradigm has been considered, first in the pioneering work by Di Marzio et al.~\cite{Marzio-Panzera-Taylor} and more recently in Meil\'an-Vila et al.~\cite{MVila-FFernandez-Crujeiras-Panzera} for the multivariate setting. Surprisingly enough, the nonparametric point of view has only been considered in very few papers. 
Note that contrary to all works aforementioned which classically focus on the conditional mean (which is our goal as well) Alonso-Pena and Crujeiras~\cite{Alonso-Crujeiras} proposed a nonparametric multimodal regression method for estimating the conditional density when for instance the latter is highly skewed or multimodal. Estimation procedures developed in \cite{Marzio-Panzera-Taylor} or \cite{MVila-FFernandez-Crujeiras-Panzera} consist in  estimating the arctangent function of the ratio of the trigonometric moments of  $\Theta$ (more details about this approach are given in the next section as it is the starting point of our procedure). More precisely, in the case of pointwise estimation and covariates supported on $[0,1]$, Di Marzio et al.~\cite{Marzio-Panzera-Taylor} investigated the performances of a Nadaraya-Watson and a local linear polynomial estimators. Theoretically, for regression functions being twice continuously differentiable, they obtained expressions for asymptotic bias and variance. Their proofs are based on linearization of the function arctangent  by using Taylor expansions, but no sharp controls of the remainder terms in the expansions are obtained. Actually obtaining such controls would be very tedious with such an  approach based on Taylor expansions. As for the more recent work of  Meil\'an-Vila et al.~\cite{MVila-FFernandez-Crujeiras-Panzera}, they studied the multivariate setting $[0,1]^d$ with the same estimators and proofs technics. In both papers, neither rates of convergence nor adaptation are obtained and cross-validation is used to select the kernel bandwidth in practice.  By adaptation, we mean that the estimators do not require the specification of the regularity of the regression function which is crucial from a practical point of view. In view of this, we were motivated to fill the gap in the literature. Our goal is twofold:  obtaining optimal rates of convergence for predictors supported on $\R$ and adaptation for estimating $m$ the regression function. To achieve this, we propose a new strategy based on concentration inequalities along with warping methods. 

\medskip

\textit{Our contributions.} Under the assumption that the cumulative distribution function (c.d.f.) of the design $X$ is known and invertible, warping methods used in this paper consist in introducing the auxiliary function $g := m \circ F_X^{<-1>}$, with $ F_X^{<-1>}$ the inverse of $F_X$. We then use classical kernel rules to estimate the function $g$ in the specific framework of circular data. Our procedure needs to select two bandwidths. Fully data-driven selection of bandwidths is performed by using a Goldenshluger-Lepski type procedure \cite{Goldenshluger-Lepski:2011}. Then, theoretical performances are studied. We consider the minimax setting and prove by establishing upper and lower bounds that our procedure is nearly optimal on anisotropic H\"older classes of functions for pointwise estimation. These results are stated in Theorems \ref{adaptive:thm:pointwise-risk.upper-bound:g} and \ref{lower-bound} respectively. Then, we conduct a numerical study whose goal is twofold. We first investigate the best tuning parameters of our procedure. Once tuned, our estimates are used on artificial data and compared to other classical methods. The numerical study reveals the good performances of our methodology.

\medskip

\textit{Plan.} In section \ref{sec:case-known-design-distribution}, we explain how to take into account the circular nature of the response and then propose our data-driven kernel estimator of the regression function $m$  based on warping strategy and the Goldenshluger-Lepski bandwidth selection rule. Section \ref{sec:theoretical} contains the theoretical results. Section \ref{sec:regression.circular:numerical.simulation} presents numerical results including simulations. Finally, all the proofs are deferred to Section \ref{sec:proofs}.

\medskip

\textit{Notations.} It is necessary to equip the reader with some notations. In the sequel, a point on $\mathbb{S}^1$ will not be represented as a two-dimensional vector $\bold{w}=(w_2,w_1)^\top$ with unit Euclidean norm but as an angle $\theta=atan2(w_1,w_2) $ defined as follows:
\begin{definition}\label{def:full.formula.atan2} The function $\Atan: \R^2\setminus (0,0)\mapsto [-\pi;\pi]$ is defined for any $(w_1,w_2)\in\R^2\setminus (0,0)$ by \emph{
		$$
		\Atan (w_1,w_2):= \left \{
		\begin{array}{ll}
		\arctan\big( \frac{w_1}{w_2} \big) & \quad \mbox{if } \quad w_2 \geq 0 , w_{1} \neq 0 \\
		{0} & \quad {\mbox{if } \quad w_2 > 0 , w_{1} = 0} \\
		\arctan\big( \frac{w_1}{w_2} \big) + \pi & \quad \mbox{if } \quad w_2 < 0, w_1 >0 \\
		\arctan\big( \frac{w_1}{w_2} \big) - \pi & \quad \mbox{if } \quad w_2 < 0, w_1 \leq 0,
		\end{array}
		\right.
		$$
		with $\arctan$ taking values in $[- \pi/ 2, \pi/ 2]$. In particular for $w_1>0$, $\Atan (w_1,0)=\arctan(+\infty)=\pi/2$ and $\Atan (-w_1,0)=\arctan(-\infty)=-\pi/2$.
}\end{definition} 
In this definition, one has arbitrarily fixed the origin of $\mathbb{S}^1$ at $(1,0)^\top$ and uses the anti-clockwise direction as positive. Thus, a circular random variable can be represented as angle over $[-\pi, \pi)$. Observe that $\Atan (0,0)$ is not defined. Hereafter, $\left\| \cdot \right\|_{\L^{1}(\R)}$ and $\left\| \cdot \right\|_{\L^{2}(\R)}$ respectively denote the $\L^{1}$ and $\L^{2}$ norm on $\R$ with respect to the Lebesgue measure:
\begin{equation*}
\left\| f \right\|_{\L^{1}(\R)}  =  \int_{\R} |f(y)| dy , \quad \left\| f \right\|_{\L^{2}(\R)}  = \Big( \int_{\R} |f(y)|^{2} dy  \Big)^{1/2} .
\end{equation*}
The $\L^{\infty}$ norm is defined by $\left\|f\right\|_{\infty} = \sup_{y \in \R} |f(y)|$. Moreover, we denote $*$ the classical convolution product defined for functions $f, g$ by $f * g(x) := \int_{\R} f(x-y). g(y) dy $, for $x \in \R$. Finally, for $\alpha \in \R$, $[\alpha]_{+} := \max\left\{ \alpha ; 0 \right\}$, and for $\beta > 0$, $\lfloor \beta \rfloor$ denotes the largest integer strictly smaller than $\beta$.
\section{The estimation procedure}
\label{sec:case-known-design-distribution}
After recalling the framework of circular data in Section~\ref{sec:framework}, Section~\ref{sec:warping-strategy} is devoted to the construction of an estimator for $m(x)$, at a given point $x \in~\R$  which will be fixed along this paper, using warped kernel methods. Then, Section~\ref{sec:regression.circular:data-driven.estimator} presents a data-driven procedure for bandwidth selection by using the Goldenshluger-Lepski methodology. 
\subsection{The framework of circular data}\label{sec:framework}
There is no doubt that, due to their periodic nature, circular data are fundamentally different from linear ones, and thus need specific tools. To measure the closeness between two angles $\theta_1$ and $\theta_2$, we do not consider the natural distance
$$d(\theta_{1} , \theta_{2}) := \min \left\{ \big| \theta_{1}  - \theta_{2}  +  2k\pi \big| : k \in \mathbb{Z} \right\},\quad \theta_1, \theta_2 \in [-\pi, \pi),$$
but we focus on $d_c$ with $$d_{c} (\theta_{1}, \theta_{2}) := 1 - \cos(\theta_{1} - \theta_{2}),\quad \theta_1, \theta_2 \in [-\pi, \pi),$$   which is extensively used in the literature of directional statistics (see for instance Section 2 in the seminal monograph by Mardia and Jupp \cite{book:Mardia-Jupp}, Section 3.2.1 of \cite{book:Ley-Verdebout}, \cite{Marzio-Panzera-Taylor} or \cite{MVila-FFernandez-Crujeiras-Panzera}). Note that the divergence $d_c$ corresponds to the usual squared Euclidean norm in $\R^{2}$. Indeed, the angles $\theta_{1}$ and $\theta_{2}$ determine the corresponding points $( \cos \theta_{1} ,  \sin \theta_{1})$ and $( \cos \theta_{2} ,  \sin \theta_{2})$ respectively on the unit circle $\mathbb{S}^{1}$.  Then, the usual squared Euclidean norm  in $\R^{2}$ reads
\begin{align*}
\big( \cos \theta_{1} - \cos \theta_{2} \big)^{2} + \big( \sin \theta_{1} - \sin \theta_{2} \big)^{2} = 2. \big[ 1 - \cos (\theta_{1} - \theta_{2}) \big] = 2. d_{c}(\theta_{1}, \theta_{2}).
\end{align*}
Hence, $\sqrt{d_c}$ is a distance on $[-\pi,\pi)$ and we naturally look for a measurable function $m$ such that: 
\begin{align}\label{minimisation}
\mathbb{E} \big[ d_c(\Theta, m(X))  \big] =
\underset{f: \;\mathbb{R} \rightarrow [-\pi,\pi) }{ \textrm{min} } \; \mathbb{E} \big[ d_c(\Theta, f(X))  \big],
\end{align}
where the minimum is taken over $[-\pi, \pi)$-valued functions $f$ that are measurable with respect to the $\sigma$-algebra generated by $X$. It is interesting to notice that the minimization problem (\ref{minimisation}) is directly linked to the definition of the Frechet mean on the circle (see Charlier \cite{Charlier:2013}). 
Furthermore, in the literature of directional statistics, the problem of finding such a regression function $m(X)$ as defined in (\ref{minimisation}) has been already considered to solve the circular regression problem (see \cite{Marzio-Panzera-Taylor:2014} and \cite{MVila-FFernandez-Crujeiras-Panzera}).
\\
Now let us work conditionally to $X$. For $x \in \R$ let
\begin{equation}\label{def:m1m2}
m_{1}(x) := \mathbb{E} \big( \sin (\Theta) | X=x\big)\quad and \quad m_{2}(x) := \mathbb{E} \big( \cos (\Theta) | X=x\big).
\end{equation}
Moreover, write for an arbitrary function $f: \mathbb{R} \rightarrow [-\pi,\pi) $
\begin{align*}
\mathbb{E} \big[ \cos (\Theta - f(X)) | X \big] &= \cos(f(X)). m_{2}(X) + \sin(f(X)). m_{1}(X)
\\
&=  \sqrt{(m_{2}(X))^{2} + (m_{1}(X))^{2} }. \cos (f(X) - \gamma(X)) ,
\end{align*}
where $\gamma : \;\mathbb{R} \rightarrow [-\pi,\pi)$ is defined for $x\in\R$ by 
\begin{equation*}
\cos(\gamma(x)) := \dfrac{m_{2}(x)}{\sqrt{(m_{2}(x))^{2} + (m_{1}(x))^{2} }}   \quad  \textrm{ and } \quad  \sin(\gamma(x)) := \dfrac{m_{1}(x)}{\sqrt{(m_{2}(x))^{2} + (m_{1}(x))^{2} }}.
\end{equation*}
Observe that 
$$\gamma(x) = \Atan ( m_{1}(x),m_{2}(x)).$$
Thus, we have
\begin{align*} \label{eq:m(X).given.by.atan2}
\underset{f: \;\mathbb{R} \rightarrow [-\pi,\pi) }{ \textrm{min} }  \; \mathbb{E} \big[ d_c(\Theta, f(X))  \big]
&=1- \underset{f: \;\mathbb{R} \rightarrow [-\pi,\pi) }{ \textrm{max} } \E\Big[\mathbb{E} \big[ \cos (\Theta - f(X)) | X \big] \Big]\\
&=1- \underset{f: \;\mathbb{R} \rightarrow [-\pi,\pi) }{ \textrm{max} } \E\Big[\sqrt{m_1^2(X)+m_2^2(X)}\cos (f(X) - \gamma(X))\Big].  
\end{align*}
Finally the minimizer of the minimization problem \eqref{minimisation} is achieved for 
$$f(x)=\gamma(x) = \Atan \big( m_{1}(x),m_{2}(x)\big).$$

In conclusion, the circular nature of the response is taken into account by the arctangent of the ratio of the conditional expectation of sine and cosine components of $\Theta$ given $X$ 
and we tackle the problem by estimating the function \begin{equation}
m(x) = \Atan (m_1(x), m_2(x)),\quad x\in \R,
\label{formula:regress-function.m}
\end{equation} 
with $m_1$ and $m_2$ defined in \eqref{def:m1m2}. 

\begin{remark}
Observe that if $m_1(x)=m_2(x)=0$, then $m(x)$ is not defined. This occurs if and only if
$$
\phi_1(f(\cdot|x)):=\int_{-\pi}^{\pi} e^{i\theta}f(\theta|x) d\theta=0,
$$
where $f(\cdot|x)$ denotes the conditional density of $\Theta|X=x$. Note that $\phi_1(f(\cdot|x))$ plays a specific role in the literature of directional statistics. See for instance Section 3.4.2 of \cite{book:Mardia-Jupp}.
\end{remark}
In the sequel, we estimate the circular regression function $m$ as defined in \eqref{formula:regress-function.m} under the condition
\begin{equation}\label{not0}
\phi_1(f(\cdot|x))\not=0.
\end{equation}
We set $\zeta:=(\zeta_i)_{i=1,\ldots,n}$ the vector of errors so that 
\begin{equation} \label{def:regression-model}
\Theta_i = m(X_i) + \zeta_i \quad (\textrm{mod } 2\pi), \quad i = 1, \ldots, n.
\end{equation}
Our estimation methodology is based on a warping strategy.
\subsection{Warping strategy}
\label{sec:warping-strategy}
The popular Nadaraya-Watson (NW) methodology provides a natural estimator of $m$ of the form
\begin{equation*}
\widehat{m}^{NW}_{h} : x \longmapsto \dfrac{\frac{1}{n} \sum_{j=1}^{n} \Theta_{j}.K_{h}(x - X_{j}) }{ \frac{1}{n} \sum_{j=1}^{n}  K_{h}(x - X_{j}) },
\end{equation*}
with $K : \R \rightarrow \R$ such that $\int_{\mathbb{R}} K(y) dy = 1$ and $K_{h}(\cdot) := \frac{1}{h} K(\frac{\cdot}{h})$, for some bandwidth $h > 0$.
However, on the one hand, the denominator which can be small may lead to some instability. On the other hand, as adaptive estimation requires the data-driven selection of the bandwidth, the ratio form of the NW estimate indicates that we should select two bandwidths: one for the numerator and one for the denominator. 
Consequently, considering NW estimators for $m_{1}$ and $m_{2}$ involve four bandwidths. This makes the study of these estimators quite intricate. 

Recalling that $g= m \circ F_X^{<-1>}$ with $ F_X^{<-1>}$ the inverse of $F_X$, warping methods then boil down to first estimating $g$ by say $\widehat g$ and then estimating the regression function of interest $m$ by $\widehat g\, \circ F_X$. To deal with regression with random design, the warping strategy has been applied for instance by Kerkyacharian and Picard~\cite{Kerkyacharian-Picard:2004}, Pham Ngoc~\cite{PhamNgoc2009}, Chagny~\cite{Chagny2013} and Chagny et al.~\cite{Chagny-Laloe-Servien:2019}. Among the advantages of this method, let us mention that a warped kernel estimator does not involve a ratio, which strengthens its stability whatever the design distribution, even when the design is inhomogeneous.  In our framework, in order to construct an estimator for the regression function $m$, we first estimate $m_{1}$ and $m_{2}$ (see~\eqref{formula:regress-function.m}). Consequently, we introduce two auxiliary functions $g_{1} , g_{2} : (0,1) \longmapsto \R$ defined by
	\begin{equation*}
	g_{1}  :=  m_{1} \circ  F_{X}^{<-1>}, \quad \textrm{and} \quad g_{2}  :=  m_{2} \circ  F_{X}^{<-1>} ,
	\end{equation*}
	so that $m_{1} = g_{1} \circ F_{X}$ and $m_{2} = g_{2} \circ  F_{X}$; we then have for $u \in  (0,1)$
\begin{align*}
g(u)= \Atan \big( g_{1}(u),g_{2}(u) \big).
\end{align*} 
Our fully data-driven approach is based on the selection of two bandwidths that adapt automatically to the unknown smoothness of functions $g_1$ and $g_2$.

Now, we propose to adapt the strategy developed in the linear case by Chagny et al. in \cite{Chagny-Laloe-Servien:2019}. The warping device is based on the transformation $F_{X}(X_i)$ of the data $X_i$, $i = 1,\ldots,n$. 
We first define kernels considered in our framework as follows.
\begin{definition} \emph{
Let $K: \mathbb{R} \rightarrow \mathbb{R}$ be an integrable function such that $K$ is compactly supported, $K \in  \L^{\infty}(\R) \cap \L^{1}(\R) \cap \L^{2}(\R)$. We say that $K$ is a kernel if it satisfies $\int_{\mathbb{R}} K(y)dy = 1$. 
}\end{definition}
Then, for $u \in (0,1)$, we estimate $g_{1}(u)$ and $g_{2}(u)$ by
\begin{align}
\widehat{g}_{1,h_{1}}(u) :=  \dfrac{1}{n} \sum_{i=1}^{n} \sin (\Theta_{i}) . K_{h_{1}} (u -  F_{X}(X_{i})) , 
\label{formula:estimate.g1-g2.fixed-h}
\\   
\widehat{g}_{2,h_{2}}(u) :=  \dfrac{1}{n} \sum_{i=1}^{n} \cos (\Theta_{i}) . K_{h_{2}} (u -  F_{X}(X_{i}))  \nonumber
\end{align}
respectively,  where $h_{1}, h_{2} > 0$ are bandwidths of kernels $K_{h_{1}}(\cdot)$ and $K_{h_{2}}(\cdot)$ respectively.\\
Thus, we estimate $g$ by
\begin{equation}
\widehat{g}_{h}(u) := \Atan \big( \widehat{g}_{1,h_{1}}(u), \widehat{g}_{2,h_{2}}(u) \big) , \quad  u \in  (0,1),
\end{equation}
where we denote $h := (h_{1},h_{2})$. Moreover, as a consequence, for $x \in \R$, the estimators for $m_{1}$ and $m_{2}$ are
\begin{equation} \label{formula:estimator.m_1h}
\widehat{m}_{1,h_{1}}(x)  :=  \widehat{g}_{1,h_{1}} \big( F_{X}(x) \big)  = \dfrac{1}{n } \sum_{i=1}^{n} \sin (\Theta_{i}). K_{h_{1}} \big(  F_{X}(x) -  F_{X}(X_{i}) \big)  ,
\end{equation}
and
\begin{equation} \label{formula:estimator.m_2h}
\widehat{m}_{2,h_{2}}(x)  :=  \widehat{g}_{2,h_{2}} \big( F_{X}(x) \big) = \dfrac{1}{n } \sum_{i=1}^{n} \cos (\Theta_{i}). K_{h_{2}} \big(  F_{X}(x) -  F_{X}(X_{i}) \big)  .
\end{equation}
Using $m = g \circ  F_{X}= \Atan \big(m_1,m_2\big),$ we then obtain an estimator of $m(x)$ at $x \in \R$ by setting $$\widehat{m}_{h}(x) := \Atan \big( \widehat{m}_{1,h_{1}}(x), \widehat{m}_{2,h_{2}}(x) \big)=\widehat{g}_{h}\big(  F_{X}(x)\big).$$
\subsection{Bandwidth selection}
\label{sec:regression.circular:data-driven.estimator}
We study the pointwise risk of the estimator $ \widehat{m}_{h}(x)$ associated to the divergence $d_c$.  The expression of the risk is then
\begin{equation*}
\E \Big[  d_c( \widehat{m}_{h}(x),m(x)) \Big] = \mathbb{E} \Big[ d_c\big( \widehat{g}_{h}(F_X(x)), g(F_X(x)) \big) \Big].
\end{equation*} 
We first focus on the estimator $\widehat{g}_{h}$ of $g$ by studying the adaptive choice of bandwidths belonging to a convenient grid $\mathcal{H}_{n}$. To define the latter, we assume that the kernel $K$ satisfies $\supp(K) \subseteq [-A,A]$ for some $A>0$ and we take $h_{\max}$ a constant such that $F_X(x) - A. h_{\max} > 0$ and $F_X(x) + A. h_{\max} < 1$. Then, we set
\begin{equation}
\label{def:bandwidth.colletion.H_n}
\mathcal{H}_{n} := \left\{ h = k^{-1} :\  k \in{\mathbb N}^{*}, \ h \leq h_{\max} , \  n.h >  \max \Big( \dfrac{\left\|K\right\|_{\L^{2}(\R)}^{2} }{\left\|K\right\|_{\infty}^{2} } ; 1 \Big)  . \log(n)\right\}.
\end{equation}
\begin{remark}
Observe that the condition
$$F_X(x) - A. h_{\max} > 0,\quad F_X(x) + A. h_{\max} < 1$$
is satisfied for $n$ large enough if $h_{\max}$ depends on $n$ and goes to 0 (even slowly) when $n\to+\infty$.\end{remark}
We have $  \card\big( \mathcal{H}_{n} \big)\lesssim n/\log n$. 
In the sequel, we apply the method proposed by Goldenshluger and Lepski in~\cite{Goldenshluger-Lepski:2011} to select an optimal value for bandwidths $h_{1}$ and $h_{2}$ automatically. Let $j\in\{1,2\}$.
For $h_{j} \in \mathcal{H}_{n}$ and $v \in (0,1)$ we set
\begin{equation} \label{adaptive:def:A1(h1,v)}
A_{j}(h_{j} , v)  :=  \underset{h_{j}' \in \mathcal{H}_{n}}{ \sup }  \Big\{ \big| \widehat{g}_{j,h_{j},h_{j}'}(v) - \widehat{g}_{j,h_{j}'}(v) \big|  -  \sqrt{\widetilde{V}_{j}(n,h_{j}')} \Big\}_{+}  ,
\end{equation}
with $\widetilde{V}_{j}(n,h_{j}') := c_{0, j}.\dfrac{\log(n) . \left\|K\right\|_{\L^2(\R)}^2}{n . h_{j}'}$, $c_{0, j} > 0$ a tuning parameter and \begin{equation*}
\widehat{g}_{j,h_{j},h_{j}'}(v) := \big( K_{h_{j}'}*\widehat{g}_{j,h_{j}} \big) (v),
\end{equation*}
so that $\widehat{g}_{j,h_{j},h_{j}'}(v) = \widehat{g}_{j,h_{j}',h_{j}}(v)$.
Then, a data-driven choice of bandwidth $h_{j}$ is performed as follows:
\begin{equation} \label{adaptive:def:hat.h1:criterion}
\widehat{h}_{j} = \underset{h_{j} \in \mathcal{H}_{n}}{ \textrm{argmin} } \Big\{ A_{j}(h_{j} , v)  +   \sqrt{\widetilde{V}_{j}(n,h_{j})} \Big\}.
\end{equation}
Observe that our bandwidth  selection rule depends on $x$. 
The criterion~\eqref{adaptive:def:hat.h1:criterion} is inspired from \cite{Goldenshluger-Lepski:2011}, in order to mimic the optimal "bias-variance" trade-off in the pointwise quadratic decomposition:  
\begin{align*}
\E[|\widehat{g}_{j,h_{j}}( v )-g_j(v)|^2] &= |\E[\widehat{g}_{j,h_{j}}( v )]-g_j(v)|^2+\E[|\widehat{g}_{j,h_{1}}( v )-\E[\widehat{g}_{j,h_{j}}( v )]|^2]
\\
&=: b^2(h_j,v)+V(h_j,v).
\end{align*}
It is common to use $\widetilde{V}_{j}(n,h_{j}')$ to provide an upper bound for the variance term $V(h_j,v)$ (see Section~\ref{sec:lem:point-wise:mean-var-cov.g_{2}:proof}), whereas the more involved task of the Goldenshluger-Lepski method is to provide an estimate for the bias term by comparing pair-by-pair several estimators. In our framework, the bias term corresponds to 
$$b(h_j,v)=|\E[\widehat{g}_{j,h_{j}}( v )]-g_j(v)|=\big| \big( K_{h_{j}} * g_{j} \big) (v) - g_{j}(v) \big|,$$ (see~\eqref{form:compute.expetation.g_{1,h1}}), so it is natural to estimate it by an estimator of the form $\big| \big(  K_{h_{j}}*\widehat{g}_{j,h_{j}'} \big) (v) - \widehat{g}_{j,h_{j}'}(v) \big|$. 
Thus, the estimator of the bias term is $A_{j}(h_{j},v)$, defined in~\eqref{adaptive:def:A1(h1,v)}, where the second term $\sqrt{\widetilde{V}_{j}(n,h_{j}')}$ controls the fluctuations of the first term. 
Now, we define the kernel estimator of $g(v)$ with data-driven bandwidths as follows:
\begin{equation} \label{def:adaptive-estimator:g_h}
\widehat{g}_{\widehat{h}}(v)  :=  \Atan \big( \widehat{g}_{1,\widehat{h}_1}(v) , \widehat{g}_{2,\widehat{h}_2}(v) \big)  ,
\end{equation}
where we denote $\widehat{h} := (\widehat{h}_{1}, \widehat{h}_{2})$. 
We finally define the adaptive estimator for $m(x)$ by \begin{equation} \label{adaptive:def:m_h.adaptive}
\widehat{m}_{\widehat{h}}(x) := \Atan \big( \widehat{m}_{1,\widehat{h}_1}(x), \widehat{m}_{2,\widehat{h}_2}(x) \big).
\end{equation}  
\section{Theoretical results}\label{sec:theoretical}
\subsection{Minimax rates of convergence}
\label{sec:adaptive:oracle-inequalities.rate-convergence}
The minimax approach is a framework that shows the optimality of an estimate among all
possible estimates.  The minimax pointwise quadratic risk for the estimator $\widehat{g}_{\widehat{h}} =  \Atan \big( \widehat{g}_{1,\widehat{h}_1}, \widehat{g}_{2,\widehat{h}_2} \big)$ will be derived from the following control of the pointwise quadratic risks  of $\widehat{g}_{1,\widehat{h}_1}$ and $\widehat{g}_{2,\widehat{h}_2}$.
\begin{proposition}\label{adaptive:prop:upper-bound.high-Proba:g2-g1}
Consider the collection of bandwidths $\mathcal{H}_{n}$ defined in~\eqref{def:bandwidth.colletion.H_n}. Let $j\in\{1,2\}$ and $q \geq  1$ and assume that $\min\left\{c_{0, 1} ; c_{0, 2}\right\} \geq 16 \big(2 + q \big)^{2}. \big( 1 + \left\|K\right\|_{\L^{1}(\R)} \big)^{2}$. Then,  with probability larger than $1 - 4.n^{-q}$,
\begin{align*}
\big|  \widehat{g}_{j,\widehat{h}_j}( F_X(x) )  - g_{j}(F_X(x))  \big|   \leq  \inf_{h_{j} \in \mathcal{H}_{n}} \Big\{ & \big( 1 + 2. \left\|K\right\|_{\L^{1}(\R)} \big). \left\| g_{j} - K_{h_{j}}*g_{j} \right\|_{\infty} \\ 
&+ 3. \sqrt{\widetilde{V}_{j}(n,h_{j})} \Big\} .
\end{align*}
\end{proposition}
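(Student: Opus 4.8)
The plan is to run the standard Goldenshluger--Lepski bias/variance balancing argument, but pointwise at $v := F_X(x)$ and on a high-probability event rather than in expectation. Writing $\widehat g_{j,h}$ for $\widehat g_{j,h}(v)$ and exploiting the symmetry $\widehat g_{j,h,h'} = \widehat g_{j,h',h}$ recorded after~\eqref{adaptive:def:A1(h1,v)}, I would first fix an arbitrary $h_j \in \mathcal H_n$ and split
\[
|\widehat g_{j,\widehat h_j}(v) - g_j(v)| \le \underbrace{|\widehat g_{j,\widehat h_j}(v) - \widehat g_{j,h_j,\widehat h_j}(v)|}_{T_1} + \underbrace{|\widehat g_{j,h_j,\widehat h_j}(v) - \widehat g_{j,h_j}(v)|}_{T_2} + \underbrace{|\widehat g_{j,h_j}(v) - g_j(v)|}_{T_3}.
\]
By the very definition~\eqref{adaptive:def:A1(h1,v)} of $A_j$, taking $h_j'=\widehat h_j$ inside the supremum gives $T_1 \le A_j(h_j,v) + \sqrt{\widetilde V_j(n,\widehat h_j)}$, and taking $h_j'=h_j$ (after rewriting $\widehat g_{j,h_j,\widehat h_j}=\widehat g_{j,\widehat h_j,h_j}$ by symmetry) gives $T_2 \le A_j(\widehat h_j,v) + \sqrt{\widetilde V_j(n,h_j)}$. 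Invoking the minimizing property~\eqref{adaptive:def:hat.h1:criterion} of $\widehat h_j$, namely $A_j(\widehat h_j,v)+\sqrt{\widetilde V_j(n,\widehat h_j)} \le A_j(h_j,v)+\sqrt{\widetilde V_j(n,h_j)}$, collapses $T_1+T_2$ to $2A_j(h_j,v) + 2\sqrt{\widetilde V_j(n,h_j)}$. This bound is deterministic and holds for every $h_j$.

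The core of the proof then reduces to two stochastic estimates, established on a common event of probability at least $1-4n^{-q}$. For the ``bias estimator'' I would decompose, for any pair $h_j,h_j'$,
\[
\widehat g_{j,h_j,h_j'}(v) - \widehat g_{j,h_j'}(v) = \big(K_{h_j'}*(K_{h_j}*g_j - g_j)\big)(v) + \xi(h_j,h_j') - \eta(h_j'),
\]
where the deterministic term is bounded by $\|K\|_{\L^1(\R)}\,\|g_j - K_{h_j}*g_j\|_\infty$ via Young's inequality, and $\xi,\eta$ are the centred fluctuations of $\widehat g_{j,h_j,h_j'}(v)$ and $\widehat g_{j,h_j'}(v)$. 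For $T_3$ I would split into the same deterministic bias $\|g_j - K_{h_j}*g_j\|_\infty$, using the identity $\E[\widehat g_{j,h_j}(v)] = (K_{h_j}*g_j)(v)$, plus a centred fluctuation. Everything hinges on showing that, uniformly over the grid, all these centred fluctuations are dominated by the matching $\tfrac12\sqrt{\widetilde V_j(n,\cdot)}$, so that the positive part in~\eqref{adaptive:def:A1(h1,v)} annihilates them and leaves $A_j(h_j,v)\le \|K\|_{\L^1(\R)}\|g_j-K_{h_j}*g_j\|_\infty$, while $T_3 \le \|g_j-K_{h_j}*g_j\|_\infty + \sqrt{\widetilde V_j(n,h_j)}$. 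Adding these to $2A_j(h_j,v)+2\sqrt{\widetilde V_j(n,h_j)}$ produces precisely the coefficients $1+2\|K\|_{\L^1(\R)}$ and $3$ of the statement, and taking the infimum over $h_j\in\mathcal H_n$ concludes.

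The main obstacle is the uniform concentration step. Each $\widehat g_{j,h_j'}(v) = \tfrac1n\sum_i Y_i K_{h_j'}(v-F_X(X_i))$, with $Y_i=\sin\Theta_i$ or $\cos\Theta_i$ so that $|Y_i|\le 1$, is an average of bounded i.i.d. summands; the probability integral transform $F_X(X_i)\sim\mathcal U(0,1)$ yields the variance bound $\Var \le \|K\|_{\L^2(\R)}^2/(nh_j')$ and the range bound $\|K\|_\infty/h_j'$, so Bernstein's inequality applies. For the convolved estimator $\widehat g_{j,h_j,h_j'}$ the effective kernel is $K_{h_j}*K_{h_j'}$, whose $\L^2$-norm is controlled by Young's inequality as $\|K\|_{\L^1(\R)}\|K_{h_j'}\|_{\L^2(\R)}$, and this is exactly where the factor $(1+\|K\|_{\L^1(\R)})^2$ in the hypothesis on $c_{0,j}$ originates. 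The delicate points are (i) calibrating the Bernstein deviation level to $\sqrt{\widetilde V_j(n,\cdot)}$ so that the $\log n$ factor built into $\widetilde V_j$ beats the union bound over $\card(\mathcal H_n)\approx n/\log n$ bandwidths, and (ii) checking that the grid constraint $n h_j' > \max(\|K\|_{\L^2(\R)}^2/\|K\|_\infty^2,1)\log n$ in~\eqref{def:bandwidth.colletion.H_n} keeps the Bernstein variance term dominant over the range term, so that the assumed bound $\min\{c_{0,1};c_{0,2}\}\ge 16(2+q)^2(1+\|K\|_{\L^1(\R)})^2$ delivers a per-term failure probability of order $n^{-(q+1)}$ and hence the global control $4n^{-q}$.
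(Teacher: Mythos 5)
Your proposal is correct and follows essentially the same route as the paper's proof: the same three-term decomposition, the same use of the minimizing property of $\widehat h_j$ to collapse the first two terms to $2A_j(h_j,v)+2\sqrt{\widetilde V_j(n,h_j)}$, the same control of $A_j$ via Young's inequality for the deterministic part and Bernstein plus a union bound over the grid for the fluctuations. The only calibration detail to fix is the ``$\tfrac12\sqrt{\widetilde V_j}$'' split: since $\left\|K\right\|_{\L^1(\R)}\geq 1$ in general, the fluctuations of $\widehat g_{j,h_j'}$ and of the convolved estimator must be dominated by $\sqrt{\widetilde V_j}/(1+\left\|K\right\|_{\L^1(\R)})$ and $\left\|K\right\|_{\L^1(\R)}\sqrt{\widetilde V_j}/(1+\left\|K\right\|_{\L^1(\R)})$ respectively (which is exactly what the hypothesis on $c_{0,j}$ delivers with $p=q+2$), not by $\tfrac12\sqrt{\widetilde V_j}$ each.
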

\noindent
The proof of Proposition~\ref{adaptive:prop:upper-bound.high-Proba:g2-g1} is given in Section~\ref{sec:adaptive:prop:upper-bound.high-Proba:g2-g1:proof}. Roughly speaking, in view of results of Section~\ref{sec:regression.circular:proofs:preliminaries}, the right hand side of the inequality stated in Proposition~\ref{adaptive:prop:upper-bound.high-Proba:g2-g1} may be viewed as the bias-variance decomposition of the pointwise quadratic-risk of the best warped-kernel estimate, up to a logarithmic term. 
\begin{remark}
Examining the proof of Proposition~\ref{adaptive:prop:upper-bound.high-Proba:g2-g1}, the "uniform bias" comes from Inequality \eqref{uniformbias}. This control can be refined and the term $\left\| g_{j} - K_{h_{j}}*g_{j} \right\|_{\infty} $ can be replaced by
$$\sup_{t \in V(u_x)}\big| g_{j}(t) - \big(K_{h_{j}}*g_{j}\big)(t)\big|,$$
with $V(u_x):=\{t: \ |t-u_x|\leq Ah_{\max}\}.$ Observe that the size of this neighborhood of $u_x$ goes to 0 if $h_{\max}\to 0$.
\end{remark}

Since the function $\Atan (w_{1},w_{2})$ is undefined when $w_{1} = w_{2} = 0$, it is reasonable to consider the following assumption:
\begin{assumption}\label{assumption:c_{low}} \emph{We have 
\begin{equation*}
m_1(x)\not= 0 \quad \textrm{ or } \quad m_2(x)\not= 0.
\end{equation*}
Then, we define $\delta>0$ such that
\begin{equation}\label{deltawx}
\delta=\left\{
\begin{array}{ccc}
\min\big(|m_1(x)|, |m_2(x)|\big)  &\mbox{ if }   & m_1(x)\not= 0 \mbox{ and }  m_2(x)\not= 0, \\
|m_1(x)|  &\mbox{ if }   & m_2(x)= 0,  \\
|m_2(x)|  &\mbox{ if }   & m_1(x)= 0.
\end{array}
\right.
\end{equation}
}  \end{assumption}
In the minimax setting, we need some assumptions on the regularity of $g_1$ and $g_2$. Thus, we introduce the following H\"older classes that are adapted to local estimation.
\begin{definition}\label{definition:holder.class} 
\emph{Let $\beta > 0$ and $L > 0$. The H\"older class $\mathcal{H}(\beta,L)$ is the set of functions $f : (0,1) \longmapsto \mathbb{R}$, such that $f$ admits derivatives up to the order $\lfloor \beta \rfloor$, and for any $(y,\widetilde{y})\in(0,1)^2$,
\begin{align*}
\left| \dfrac{ d^{\lfloor \beta \rfloor} f }{ (d y )^{\lfloor \beta \rfloor} } (\widetilde{y}) - \dfrac{ d^{\lfloor \beta \rfloor} f }{ (d y)^{\lfloor \beta \rfloor} } (y) \right|  \leq  L. \big|\widetilde{y}-y\big|^{\beta - \lfloor \beta \rfloor}.
\end{align*}
} \end{definition}
\noindent

We also consider the following assumption on the kernel $K$:
\begin{assumption}\label{assumption:kernel.K} \emph{
The kernel $K$ is of order $\mathcal{L} \in \mathbb{R}_{+}$, i.e.
\begin{enumerate}
\item[(i)] $C_{K, \mathcal{L}} :=  \int_{\mathbb{R}}(1+ |y|)^{\mathcal{L}} . |K(y)| dy < \infty $ ;
\item[(ii)]  $\forall k \in \left\{ 1,...,\lfloor  \mathcal{L} \rfloor  \right\}$, $\int_{\mathbb{R}} y^{k}. K(y) dy = 0 $.
\end{enumerate}
} \end{assumption}
Now, we obtain an upper bound for the pointwise risk of our final estimator $\widehat{m}_{\widehat{h}}$ at $x$ defined in~\eqref{adaptive:def:m_h.adaptive}:
\begin{thm} \label{adaptive:thm:pointwise-risk.upper-bound:g}
Let $\beta_{1} , \beta_{2} , L_{1}, L_{2} > 0$. Suppose that $g_{1}$ belongs to $\mathcal{H}(\beta_{1}, L_{1})$, $g_{2}$ belongs to $\mathcal{H}(\beta_{2}, L_{2})$, the kernel K satisfies Assumption~\ref{assumption:kernel.K} with an index $\mathcal{L} \in \mathbb{R}_{+}$ such that $\mathcal{L} \geq  \max( \beta_{1}  , \beta_{2}  )$. Let $q \geq 1$, and suppose that $\min\left\{c_{0, 1} ; c_{0, 2}\right\} \geq 16 \big( 2 + q \big)^{2}. \big( 1 + \left\|K\right\|_{\L^{1}(\R)} \big)^{2}$. Then, by taking $h_{\max}=(\log n)^{-1}$,  under Assumption~\ref{assumption:c_{low}}, for $n$ sufficiently large, 
\begin{equation*}
\E \Big[ d_c( \widehat{m}_{\widehat{h}}(x), m(x) )\Big]
\leq \frac{C}{\delta^2}  . \max \left\{  \psi_{n}^2(\beta_{1}) ,  \psi_{n}^2(\beta_{2})\right\} , 
\end{equation*}
where 
$$\psi_{n}(\beta_{1}) = \big( \log(n)/n \big)^{\frac{\beta_{1}}{2\beta_{1} + 1}},\quad \psi_{n}(\beta_{2}) = \big( \log(n)/n \big)^{\frac{\beta_{2}}{2\beta_{2} + 1}},$$ $\delta$ is defined in~\eqref{deltawx} 	and $C$ is a constant depending on $\beta_{1}, \beta_{2},L_{1}, L_{2},c_{0,1}, c_{0,2}$ and~$K$.  
\end{thm}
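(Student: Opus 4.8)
The plan is to transfer the high-probability control of the two component estimators provided by Proposition~\ref{adaptive:prop:upper-bound.high-Proba:g2-g1} through the map $\Atan$, and then to carry out a bias--variance optimisation of the resulting bound over the grid $\mathcal H_n$.

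The first step is a deterministic stability estimate for $\Atan$ under Assumption~\ref{assumption:c_{low}}. Recall that $m_j(x)=g_j(F_X(x))$ and $\widehat m_{j,\widehat h_j}(x)=\widehat g_{j,\widehat h_j}(F_X(x))$, and that $\Atan$ is $\mathcal C^1$ away from the origin with
\[
\frac{\partial \Atan}{\partial w_1}(w_1,w_2)=\frac{w_2}{w_1^2+w_2^2},\qquad \frac{\partial \Atan}{\partial w_2}(w_1,w_2)=\frac{-w_1}{w_1^2+w_2^2}.
\]
I would apply the mean value theorem along the segment joining $(m_1(x),m_2(x))$ to $(\widehat m_{1,\widehat h_1}(x),\widehat m_{2,\widehat h_2}(x))$. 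On the event where each $\widehat m_{j,\widehat h_j}(x)$ is within $\min(\delta_1,\delta_2)/2$ of $m_j(x)$, this segment stays bounded away from the origin and on the same side of the branch cut of $\Atan$ (since $\widehat m_{j,\widehat h_j}(x)$ then shares the sign of $m_j(x)$), so the linearisation is licit; using $|\partial\Atan/\partial w_1|\le 1/|w_2|$ and $|\partial\Atan/\partial w_2|\le 1/|w_1|$ together with the lower bounds $\delta_1,\delta_2$ gives
\[
\big|\widehat m_{\widehat h}(x)-m(x)\big|\ \le\ \tfrac{1}{\delta_2}\,\big|\widehat m_{1,\widehat h_1}(x)-m_1(x)\big|+\tfrac{1}{\delta_1}\,\big|\widehat m_{2,\widehat h_2}(x)-m_2(x)\big|
\]
up to a factor tending to $1$ as $n\to\infty$ that is absorbed into the constants below. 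The delicate point is precisely this sign/branch-cut issue: the estimate cannot be used unconditionally and must be coupled with the high-probability event of Proposition~\ref{adaptive:prop:upper-bound.high-Proba:g2-g1}, which I regard as the main obstacle of the argument.

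Next I would split the expectation along the event $\mathcal A$ on which Proposition~\ref{adaptive:prop:upper-bound.high-Proba:g2-g1} holds, so that $\mathbb{P}(\mathcal A^c)\le 4n^{-q}$. As any two elements of $[-\pi,\pi)$ differ by at most $2\pi$, one has $|\widehat m_{\widehat h}(x)-m(x)|^2\le 4\pi^2$ deterministically, whence the contribution of $\mathcal A^c$ to the risk is at most $16\pi^2 n^{-q}$; since $q\ge 1$ and $\psi_n^2(\beta_j)\ge c\,n^{-1}$ for $n$ large, this term is negligible with respect to the claimed rate. On $\mathcal A$, Proposition~\ref{adaptive:prop:upper-bound.high-Proba:g2-g1} gives, for $j\in\{1,2\}$,
\[
\big|\widehat m_{j,\widehat h_j}(x)-m_j(x)\big|\ \le\ \inf_{h_j\in\mathcal H_n}\Big\{\big(1+2\|K\|_{\L^1(\R)}\big)\,\|g_j-K_{h_j}*g_j\|_\infty+3\sqrt{\widetilde V_j(n,h_j)}\Big\}.
\]

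It remains to evaluate this infimum. For the bias, a Taylor expansion of $g_j\in\mathcal H(\beta_j,L_j)$ to order $\lfloor\beta_j\rfloor$, combined with the vanishing moments of $K$ up to order $\lfloor\mathcal L\rfloor\ge\lfloor\beta_j\rfloor$ from Assumption~\ref{assumption:kernel.K}(ii) and the finiteness of $C_{K,\mathcal L}$ from Assumption~\ref{assumption:kernel.K}(i) (available since $\beta_j\le\mathcal L$), yields $\|g_j-K_{h_j}*g_j\|_\infty\le c(\beta_j,L_j,K)\,h_j^{\beta_j}$, while $\sqrt{\widetilde V_j(n,h_j)}=\sqrt{c_{0,j}\,\|K\|_{\L^2(\R)}^2\,\log(n)/(n h_j)}$. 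Minimising $h\mapsto c\,h^{\beta_j}+c'\sqrt{\log(n)/(nh)}$ gives a minimiser $h_j^\star\asymp(\log(n)/n)^{1/(2\beta_j+1)}$ with optimal value $\asymp\psi_n(\beta_j)$; one checks that $h_j^\star\le h_{\max}$ and $n h_j^\star>\max(\|K\|_{\L^2(\R)}^2/\|K\|_\infty^2;1)\log(n)$ for $n$ large, and that the spacing of $\mathcal H_n$ near $h_j^\star$ is $O((h_j^\star)^2)$, so a grid point $\bar h_j\in\mathcal H_n$ with $\bar h_j\asymp h_j^\star$ is admissible. This produces $|\widehat m_{j,\widehat h_j}(x)-m_j(x)|\le C_j\,\psi_n(\beta_j)$ on $\mathcal A$, with $C_j$ depending only on $\beta_j,L_j,c_{0,j},K$. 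Inserting this and the stability estimate, squaring and using $\psi_n^2(\beta_j)\le\max\{\psi_n^2(\beta_1),\psi_n^2(\beta_2)\}$ gives
\[
\E\big[|\widehat m_{\widehat h}(x)-m(x)|^2\,\ind_{\mathcal A}\big]\le\Big(\tfrac{1}{\delta_2^2}C_1^2+\tfrac{1}{\delta_1^2}C_2^2\Big)\max\{\psi_n^2(\beta_1),\psi_n^2(\beta_2)\},
\]
which is bounded by $\big(\delta_2^{-2}+\delta_1^{-2}\big)(C_1^2+C_2^2)\max\{\psi_n^2(\beta_1),\psi_n^2(\beta_2)\}$; adding the negligible $\mathcal A^c$ term yields the statement.
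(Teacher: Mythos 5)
Your proposal is correct, and its overall architecture coincides with the paper's: componentwise concentration at rate $C_j\psi_n(\beta_j)$ obtained by inserting the Taylor/vanishing-moment bias bound into Proposition~\ref{adaptive:prop:upper-bound.high-Proba:g2-g1} and optimising over the grid $\mathcal H_n$ (this is exactly the paper's Corollary~\ref{adaptive:corollary:concentration:g2-g1}), a good/bad event split with the bad event controlled by $|\Atan|\le\pi$ times an $n^{-q}$ probability, and a stability estimate for $\Atan$ under Assumption~\ref{assumption:c_{low}}. The one step where you genuinely diverge is that stability estimate. The paper avoids the bivariate mean value theorem: on the good event it first notes that $\widehat g_{j,\widehat h_j}(u_x)$ and $g_j(u_x)$ share the same sign, so the two values of $\Atan$ reduce to $\arctan$ of a ratio up to a common additive constant; it then inserts the intermediate point $\arctan\big(g_1(u_x)/\widehat g_{2,\widehat h_2}(u_x)\big)$, bounds the first difference via the $1$-Lipschitz continuity of the scalar $\arctan$ together with $|\widehat g_{2,\widehat h_2}(u_x)|\ge\delta_2/2$, and for the second difference uses the identity $\arctan(u)-\arctan(v)=\arctan(1/v)-\arctan(1/u)$ (valid when $uv>0$) to trade the denominator for $g_1(u_x)$ and invoke $|g_1(u_x)|\ge\delta_1$. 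Your route --- the mean value theorem along the segment, with $|\partial\Atan/\partial w_1|\le 1/|w_2|$ and $|\partial\Atan/\partial w_2|\le 1/|w_1|$ --- produces the same weights, namely $1/\delta_2$ on the $g_1$-error and $1/\delta_1$ on the $g_2$-error, and you correctly identify and dispose of the branch-cut/sign issue that makes the linearisation licit (each coordinate of the segment keeps a fixed sign and stays bounded away from $0$, so the segment never meets $\{w_1=0\}$). Your version is more direct and makes the roles of $\delta_1,\delta_2$ transparent; the paper's stays with elementary one-variable facts about $\arctan$. Only cosmetic discrepancies remain: the joint good event has probability at least $1-8n^{-q}$ rather than $1-4n^{-q}$, and a numerical factor (the $2$ from $(a+b)^2\le 2a^2+2b^2$ in your argument, an $8$ in the paper's own intermediate display) must be absorbed into $C_1,C_2$ --- which is precisely what the paper does between its proof and the stated bound.
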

A proof of Theorem~\ref{adaptive:thm:pointwise-risk.upper-bound:g} is given in Section~\ref{sec:adaptive:thm:pointwise-risk.upper-bound:g:proof}. Observe that if $\beta_1=\beta_2=\beta$, then we obtain the rate $\psi_{n}(\beta)=(\log n/n)^{\beta/(2\beta+1)}$, which is the optimal rate for adaptive univariate regression function estimation and pointwise risk (see e.g. Section 2 in~\cite{Bochkina-Sapatinas.2009}). Note that the logarithmic term appearing in the rate of convergence is expected since we deal with pointwise adaptive estimation. For further details, we refer the reader to   Lepski \cite{Lepski1990} and Lepski and Spokoiny \cite{Lepski-Spokoiny:1997} who have highlighted and discussed this fact for the Gaussian white noise model.  

\begin{remark} 
Eventually, to obtain the fully computable estimator, we replace the c.d.f. $F_{X}$ by its natural estimate $\widehat{F}_{n}(y) :=n^{-1} \sum_{i=1}^{n} \ind_{X_i \leq y}$. Following arguments of \cite{Chagny2015}, this replacement should not change the final rate of convergence of our nonparametric estimator. \end{remark}

\subsection{Minimax lower bounds}\label{sec:lowerbound}
To establish minimax lower bounds, we assume that the $\zeta_i$'s are centered i.i.d. random angles, independent of the $X_i$'s. We also assume that  Model~\eqref{def:regression-model} satisfies the following assumption.
\begin{assumption}\label{assumption:model} 
	The design points $X_i$'s are i.i.d. random variables with density $f_X(.)$ on $[0, 1]$ such that there exists $\mu_0 < \infty$  and $f_X(t)\leq  \mu_0$ $\forall t \in [0, 1]$ and the errors $\zeta_i$ have common density $p_\zeta(.)$ on $\mathbb{S}^1$ with respect to the Lebesgue measure on $\mathbb{S}^1$, verifying
	\begin{equation}\label{condition-erreur}
	\exists\, p_*>0,\ \exists\, \theta_0>0 : \int p_{\zeta}(t) \log \frac{p_\zeta(t)}{p_{\zeta}(t+\theta)} dt \leq p_* \theta^2, \;\forall |\theta|\leq \theta_0. 
	\end{equation}
\end{assumption}
The subsequent minimax lower bound is based on a reduction scheme based on some well-chosen probability distributions.  The closeness between the associated models is measured by using the Kullback-Leibler divergence and is controlled by using Assumption~\ref{assumption:model}.  
In the sequel, the function $m$ belongs to the class $\tilde \Sigma(\beta,L)$ defined as the set of functions $f : [0,1] \longmapsto \mathbb{S}^1$ such that the derivative $f^{(l)}$, $l=\lfloor \beta \rfloor$ exists and verifies
$$
\sqrt{d_c( f^{(l)}(t), f^{(l)}(t'))} \leq L |t-t'|^{\beta-l}, \quad \forall t, t' \in [0,1].
$$  
\begin{remark}\label{rem:support} 
	For two classes of functions $\mathcal{D}$ and $\mathcal{D}'$ such that  $\mathcal{D} \subset  \mathcal{D}' $, a lower bound for the minimax rate of convergence for $\mathcal{D}$ will also be a lower bound for the minimax rate for $\mathcal{D}'$. Hence, this justifies the restriction of the study of the lower bound to circular functions $m$ defined on $[0,1]$.
\end{remark} 
\begin{remark} 
The classical von Mises distribution with location parameter $\mu \in  [-\pi,\pi) $ and  concentration parameter $\kappa >0$ whose density $f_{vM(\mu, \kappa)}$ is defined for any $\theta \in [-\pi,\pi)$ by
\begin{equation}\label{vmdensity}
f_{vM(\mu, \kappa)}(\theta)=c(\kappa). \exp \big({\kappa}. \cos( \theta - {\mu} ) \big),
\end{equation}
with $c(\kappa)$ the normalizing constant, satisfies condition (\ref{condition-erreur}). This is proved in Lemma \ref{preuve-condition-erreur}. Note that apart from the most popular von Mises distribution, two other classical circular distributions namely the cardioid and the wrapped Cauchy distributions, respectively defined by (see \cite{{book:Ley-Verdebout}})
$$\theta\in[-\pi,\pi)\longmapsto \frac{1}{2\pi} \big(1+ 2\rho \cos(\theta-\mu)\big), \quad \rho \in [0, \frac 1 2), \ \mu \in  [-\pi,\pi)
$$
and
$$\theta\in[-\pi,\pi)\longmapsto \frac{1}{2\pi} \frac{1-\ell^2}{1+ \ell^2 -2 \ell \cos(\theta-\mu)}, \quad \ell \in [0, 1 ), \ \mu \in  [-\pi,\pi)
$$
also satisfy (\ref{condition-erreur}). Proofs are very similar to the von Mises case.  
\end{remark}
We obtain the following lower bound:
\begin{thm}\label{lower-bound}
	Let $\beta >0$ and $L>0$. Under Assumptions \ref{assumption:model},  we have
	$$
	\liminf_{n \rightarrow \infty} \inf_{T_n} \sup_{m\in\tilde \Sigma(\beta,L)} \E\big[n^{\frac{2\beta}{2\beta+1}}d_c(T_n(x),m(x))\big] \geq \tilde c,
	$$   
	where $\tilde c$ depends only on $\beta, L, p_*$ and $\mu_0$ and the infimum is taken over all possible estimates based on observations $(\Theta_i,X_i)_{i=1,\ldots,n}$.
\end{thm}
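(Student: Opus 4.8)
The plan is to establish the lower bound via the classical two-point (Le Cam) reduction scheme, since the rate $n^{-\beta/(2\beta+1)}$ is driven by the \emph{local} difficulty of estimating $m$ at the single fixed point $x$. Fix a $\Co^\infty$ ``bump'' $\phi:\R\to\R$ with $\supp(\phi)\subset[-1,1]$, $\phi(0)=1$, and with finite Hölder seminorm of order $\beta$. For a bandwidth $h>0$ to be tuned, I would take as the two competing hypotheses $m_0\equiv 0$ and $m_1(\cdot):=\delta\,\phi\!\big((\cdot-x)/h\big)$, where $\delta>0$ is an amplitude; both are viewed as functions $[0,1]\to\mathbb S^1$, which is legitimate since $\delta$ will be small. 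Let $\P_0$ and $\P_1$ denote the laws of the sample $(\Theta_i,X_i)_{i=1}^n$ generated by model~\eqref{def:regression-model} under $m_0$ and $m_1$. Since $m_1(x)-m_0(x)=\delta$, any estimator must incur, on at least one hypothesis, a squared pointwise error of order $\delta^2$ unless it can reliably distinguish $\P_0$ from $\P_1$.

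Next I would calibrate $\delta$ so that $m_1\in\tilde\Sigma(\beta,L)$. As the $\ell:=\lfloor\beta\rfloor$-th derivative of $m_1$ scales like $\delta h^{-\ell}$ and its $(\beta-\ell)$-Hölder seminorm like $\delta h^{-\beta}$, choosing $\delta:=c_0 L\,h^{\beta}$ for a suitable constant $c_0$ depending only on $\phi$ guarantees $m_1\in\tilde\Sigma(\beta,L)$, while $m_0\equiv 0$ trivially belongs to the class. This fixes the functional separation at the point, $|m_1(x)-m_0(x)|=\delta=c_0 L h^{\beta}$, so the squared separation is of order $h^{2\beta}$.

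The heart of the argument is to control $KL(\P_0,\P_1)$. Conditioning on the design and using that, under hypothesis $j$, $\Theta_i$ has conditional density $p_\zeta(\cdot-m_j(X_i))$, a change of variable reduces the per-observation conditional divergence to $\int p_\zeta(u)\log\frac{p_\zeta(u)}{p_\zeta(u+y_i)}\,du$ with $y_i=m_1(X_i)-m_0(X_i)$. Because $\delta\to 0$ as $h\to 0$, we have $|y_i|\le\delta\|\phi\|_\infty\le y_0$ for $n$ large, so Assumption~\ref{assumption:model}, namely~\eqref{condition-erreur}, applies and bounds this quantity by $p_*\,y_i^2$. Averaging over $X_i\sim\mu$, using $\mu\le\mu_0$ together with $\supp(\phi)\subset[-1,1]$, yields a single-observation bound of order $p_*\mu_0\,\delta^2 h\int\phi^2$; by independence, $KL(\P_0,\P_1)\le n\,p_*\mu_0\big(\int\phi^2\big)\,\delta^2 h\asymp n\,h^{2\beta+1}$. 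Choosing $h:=\kappa\,n^{-1/(2\beta+1)}$ with $\kappa$ small enough keeps $KL(\P_0,\P_1)\le\alpha$ for a fixed $\alpha<\infty$, while simultaneously $\delta^2\asymp n^{-2\beta/(2\beta+1)}$.

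Finally I would invoke the two-point lemma: from $KL(\P_0,\P_1)\le\alpha$ and the separation $|m_1(x)-m_0(x)|=\delta$, the standard reduction to a binary test gives $\inf_{T_n}\max_{j\in\{0,1\}}\E_j\big[(T_n(x)-m_j(x))^2\big]\ge c_1\delta^2$ with $c_1$ depending only on $\alpha$. Multiplying by $n^{2\beta/(2\beta+1)}$ and using $\delta^2\asymp n^{-2\beta/(2\beta+1)}$ delivers the claimed constant $c$, depending only on $\beta,L,p_*,\mu_0$. The main obstacle I anticipate is the circular bookkeeping: one must verify that the two candidates are genuine $\mathbb S^1$-valued functions in $\tilde\Sigma(\beta,L)$, that the mod-$2\pi$ addition in~\eqref{def:regression-model} produces no wrap-around issue for a bump of small amplitude, and above all that the smallness $|y_i|\le y_0$ needed to trigger~\eqref{condition-erreur} holds uniformly in $i$ for $n$ large. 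This last point is precisely what ties the admissible amplitude $\delta$, and hence the attainable rate, to the local quadratic behaviour of the error's KL divergence encoded in~\eqref{condition-erreur}.
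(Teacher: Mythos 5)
Your proposal is correct and follows essentially the same route as the paper: a two-point Le Cam reduction with $m_0\equiv 0$ and a rescaled smooth bump $m_1$ of amplitude $\asymp L h^{\beta}$, the Kullback--Leibler divergence controlled via condition~\eqref{condition-erreur} after checking $|m_1(X_i)|\le y_0$, averaging over the design with the bound $\mu\le\mu_0$, and the choice $h\asymp n^{-1/(2\beta+1)}$. The only (immaterial) difference is that you bound the unconditional joint KL directly and invoke the standard two-point lemma, whereas the paper conditions on the design and passes the expectation through via concavity of the entropy function $\mathcal{H}$ as in Tsybakov's Lemma~2.10; both handle the randomness of the $X_i$'s correctly.
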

According to Remark~\ref{rem:support}, Theorem \ref{lower-bound} entails that the lower bound for the minimax risk for functions 
$m : \R \longmapsto \mathbb{S}^1$ such that $m \in \tilde \Sigma(\beta,L)$ is $n^{-\frac{2\beta}{2\beta+1}}$. 
Now let us connect this result  to the upper bound obtained in  Theorem \ref{adaptive:thm:pointwise-risk.upper-bound:g}. As the function $\Atan$ is infinitely differentiable on $\R^* \times \R^*$, and if $F_X$ is smoother than $g_1$ and $g_2$, then if one writes $m(x) =  \Atan (g_1(F_X(x)),g_2(F_X(x))$, the smoothness $\beta$ of $m$ will be the minimum of the smoothness of $g_1$ and the smoothness of $g_2$. Hence, the result of Theorem \ref{adaptive:thm:pointwise-risk.upper-bound:g} guarantees the near
optimal  rate of our adaptive estimator provided that $F_X$ is known.   
\section{Numerical simulations}
\label{sec:regression.circular:numerical.simulation}
In this section, we implement some simulations to study the numerical performances of our procedure. 
We consider three different regression models:
\begin{align}
\textrm{ M1}. \quad \Theta  &=  \Atan \big( 2 X - 1, X^{2} + 2 \big)  +  \zeta  \quad  \textrm{(mod $2\pi$)}, \label{regression.circular:numerical.simu:modelM1}
\\
\textrm{ M2}. \quad \Theta  &=  \Atan \big( - 2 X + 1, X^{2} - 1 \big)  +   \zeta  \quad  \textrm{(mod $2\pi$)}, \label{regression.circular:numerical.simu:modelM2}
\\
\textrm{ M3}. \quad \Theta  &=  \arccos \big( X^{5} - 1 \big)  +  3. \arcsin \big( X^{3} - X + 1 \big)  +  \zeta  \quad  \textrm{(mod $2\pi$)},
\label{regression.circular:numerical.simu:modelM3}
\end{align}
where the circular error, $\zeta$, is distributed according to a von Mises distribution $f_{v M(0,10)}$ (see \eqref{vmdensity})  and is independent from $X$. 

In the sequel, for models M1 and M2, we consider two cases: 
$X \sim U([-5,5])$ and $X \sim \mathcal{N}(0,1.5)$. For model M3, we consider $X \sim U([0,1])$.
Then, for different values of $n$, we draw a sample $\big(\Theta_i , X_i\big)_{i=1,\ldots,n}$ with the same distribution as $(\Theta,X)$. To implement the Goldenshluger-Lepski methodology, we shall consider either  the Gaussian kernel defined by $y\longmapsto K(y) = \dfrac{1}{\sqrt{2 \pi}}. e^{-\frac{y^2}{2}},$ or the Epanechnikov kernel $K$ defined by
$ y\longmapsto K(y) = \dfrac{3}{4}. (1 - y^2). \ind_{|y| \leq 1}.$ Moreover, we consider the following collection of bandwidths $\mathcal{H}_{n}$ defined as
$$\mathcal{H}_{n} := \left\{ k^{-1} : k \in \N , 1 \leq k \leq \dfrac{n}{\log(n)} \right\}.$$
Finally, we make simulations for the general case of unknown design distribution, i.e. the final estimators are computed using $y\in\R\longmapsto \widehat{F}_{n}(y) := \dfrac{1}{n} \sum_{j=1}^{n} \ind_{X_{j} \leq y}$ instead of $F_{X}$.
\subsection{Practical calibration of tuning parameters}
In the bandwidth selection procedure described in Section~\ref{sec:regression.circular:data-driven.estimator}, we need to tune two parameters $c_{0,1}$ and $c_{0,2}$ in order to find an optimal value of the pointwise risk
\begin{equation}\label{def:risk}
\mathcal{R} := 1 - \cos \big( \widehat{m}_{\widehat{h}}(x) , m(x) \big) , 
\end{equation}  
with $\widehat{m}_{\widehat{h}}(x) = \Atan \big( \widehat{g}_{1,\widehat{h}_{1}} (\widehat{F}_{n}(x)) , \widehat{g}_{2,\widehat{h}_{2}} (\widehat{F}_{n}(x)) \big)$.
To do this, we implement preliminary simulations to calibrate $c_{0,1}$ and $c_{0,2}$ by only considering model M1. Figure~\ref{fig:simulation:model1.ex4:sample_test1} displays an illustration of our setting.
\vspace{-0.4cm}
\begin{figure}[h!]
	\centering
	\begin{minipage}{.45\linewidth}
		\hspace{-0.8cm}
		\includegraphics[scale=0.22]{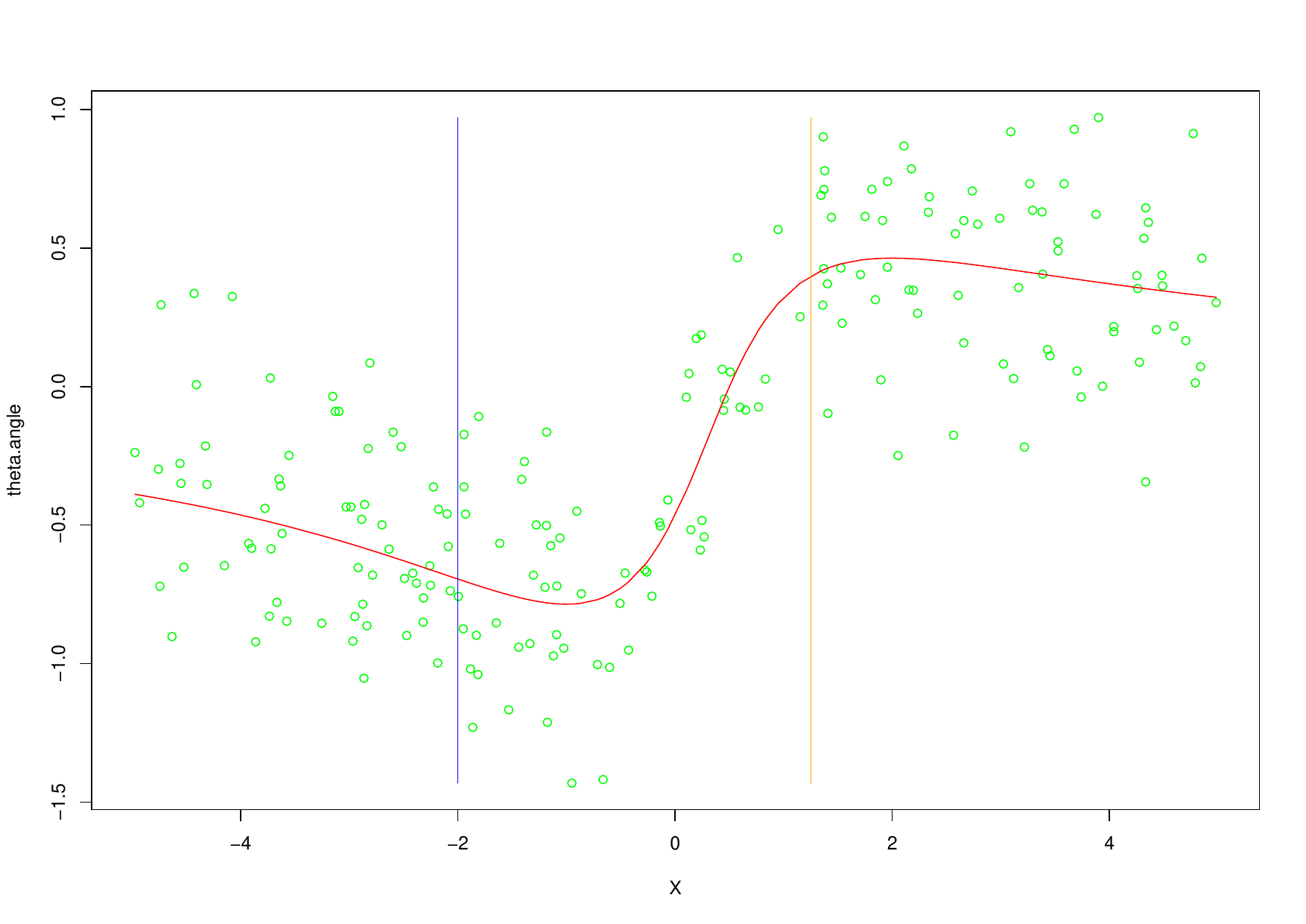}
		\subcaption{\scriptsize{ Model M1 : $X_i \sim U([-5,5])$ }}	
	\end{minipage}
	\hspace{0.1cm}
	\begin{minipage}{.45\linewidth}
		\centering
		\includegraphics[scale=0.22]{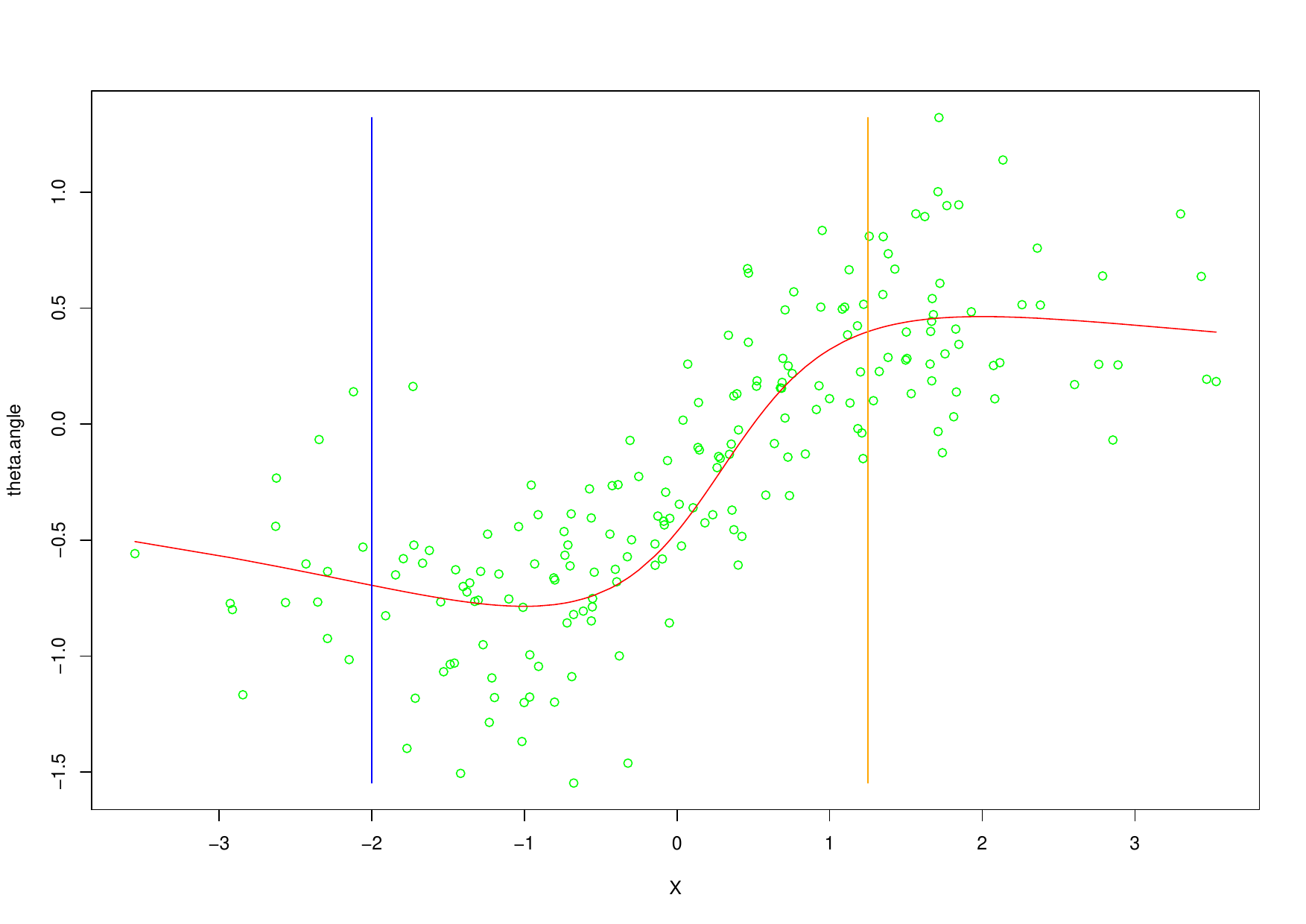}	
		\subcaption{\scriptsize{ Model M1 : $X_i \sim \mathcal{N} (0 , 1.5)$ }}
	\end{minipage}
	\vspace{-0.4cm}   
	\caption{Illustration of model M1 with $n=200$ for two different density functions of the design. Simulated data $(\Theta_i )_{i=1}^{n}$ are displayed in green points. The red curve represents the regression function $m$, while the blue vertical line displays the point $x = -2$ and the orange vertical line displays the point $x = 1.25$ where we aim at estimating $m(x)$.}\label{fig:simulation:model1.ex4:sample_test1}
\end{figure}
\subsubsection{The case $c_{0,1} = c_{0,2}$}
\vspace{-0.2cm}
To select $\widehat{h}_{1}$ and $\widehat{h}_{2}$, we  first consider the case $c_{0,1} = c_{0,2} = c_{0}$. For different sample sizes $n \in \left\{ 100 ; 200; 500; 1000  \right\}$, we compute the risk $\mathcal{R}$ defined in \eqref{def:risk} as a function of $c_{0}$ on the following discretization grid
\begin{equation*}
G_{c_0} := \left\{ 0.001 ; \hspace{0.05cm} 0.0025; \hspace{0.05cm} 0.005; \hspace{0.05cm} 0.0075; \hspace{0.05cm} 0.01; \hspace{0.05cm} 0.025; \hspace{0.05cm} 0.05; \hspace{0.05cm} 0.075 ; \hspace{0.05cm} 0.1 ; \hspace{0.05cm} 0.2 ; \hspace{0.05cm} 0.3 ; \hspace{0.05cm} 0.4 \right\}.
\end{equation*}
We denote $\mathcal{R}\equiv\mathcal{R}(c_0)$.
The numerical illustrations are  displayed in Figure~\ref{fig:simulation:regression.circular:model1.ex4.crit.find.c01=c02.XGauss.XUnif.x=-2} for $x = -2$ and in Figure~\ref{fig:simulation:regression.circular:model1.ex4.crit.find.c01=c02.XGauss.XUnif:x=1.25} for $x = 1.25$, respectively.
\begin{figure}[h!]
	\begin{minipage}{.45\linewidth}
		\hspace{-0.8cm}
		\includegraphics[scale=0.21]{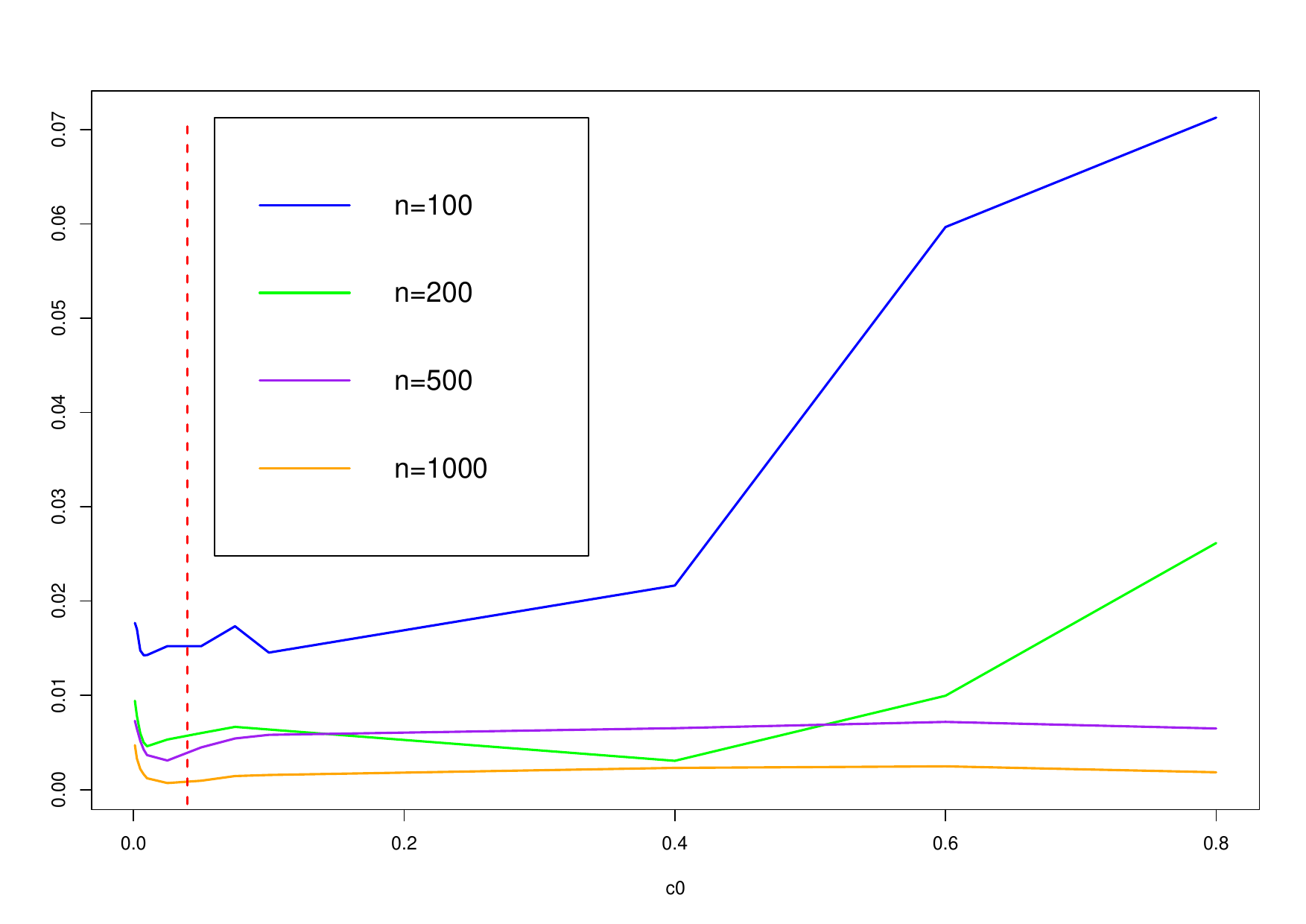}
		\subcaption{\scriptsize{$X_i \sim \mathcal{N}(0,1.5)$}}
	\end{minipage}
	\hspace{0.001cm}	
	\begin{minipage}{.45\linewidth}
		\includegraphics[scale=0.21]{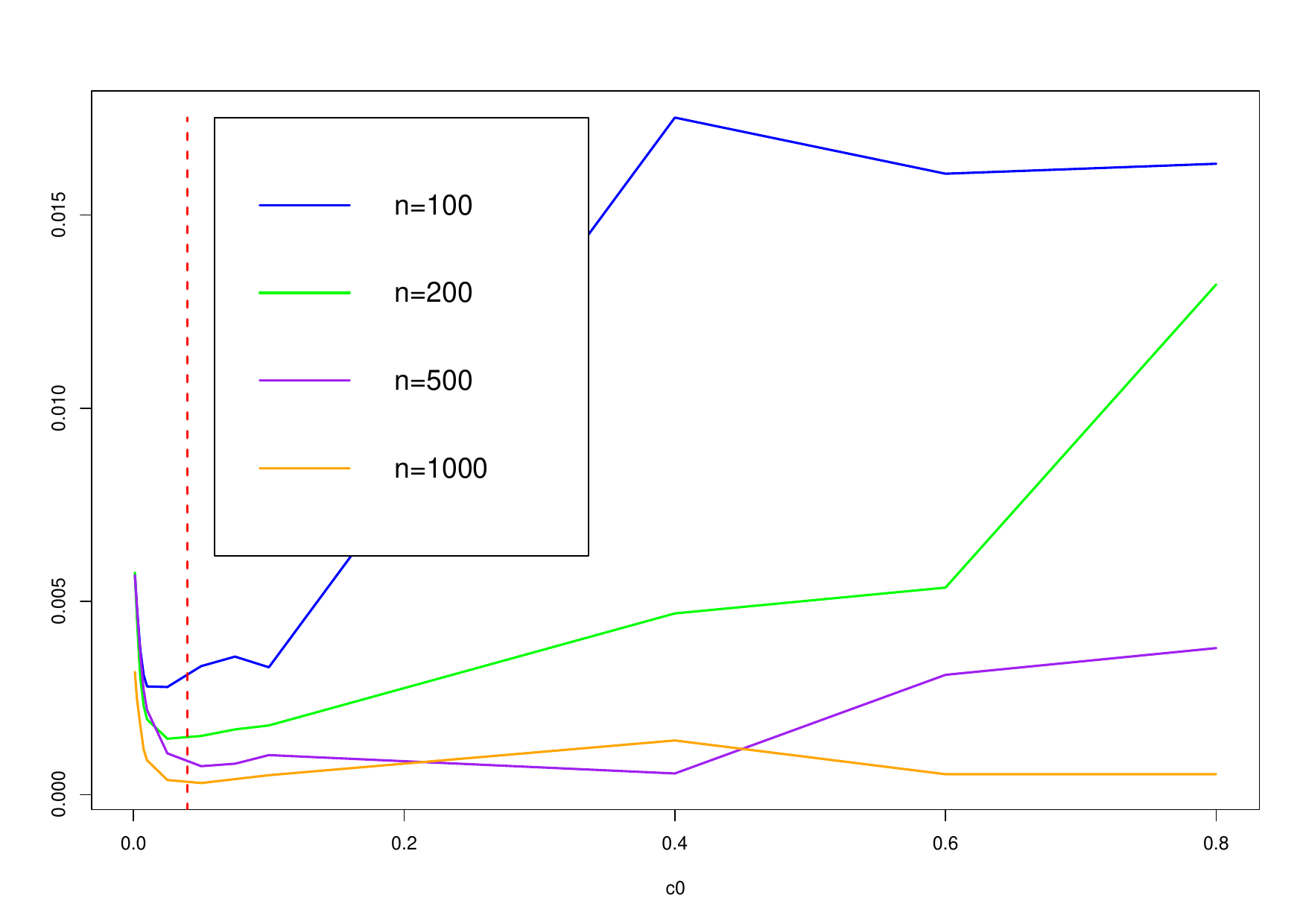}
		\subcaption{\scriptsize{$X_i \sim U([-5,5])$}}
	\end{minipage}
	\vspace{-0.4cm}  
	\caption{Model M1. Plot of the Monte Carlo estimation of the function $c_{0} \in G_{c_{0}}\longmapsto\mathcal{R}(c_0)$, based on 50 runs, for $x=-2$, the Gaussian and the uniform designs and by using the Epanechnikov kernel for $n \in \left\{ 100 ; 200; 500; 1000  \right\}$ corresponding to the line in blue, green, violet and orange, respectively. The red vertical line displays the point $c_0 =0.04$.}
	\label{fig:simulation:regression.circular:model1.ex4.crit.find.c01=c02.XGauss.XUnif.x=-2}
\end{figure}
\begin{figure}[h!]
	\begin{minipage}{.45\linewidth}
		\hspace{-0.8cm}
		\includegraphics[scale=0.21]{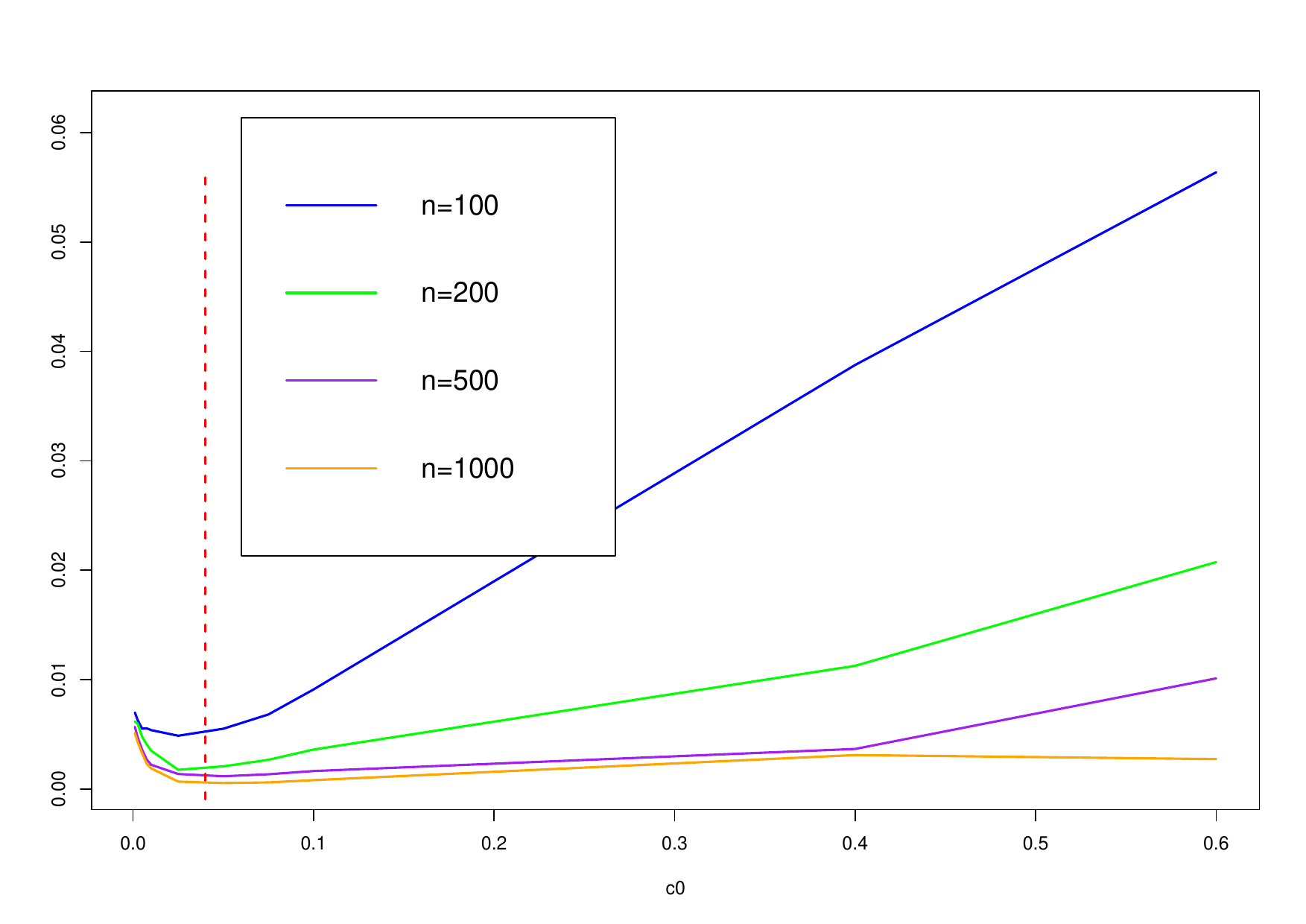}
		\subcaption{\scriptsize{$X_i \sim \mathcal{N}(0,1.5)$}}
	\end{minipage}
	\hspace{0.001cm}
	\begin{minipage}{.45\linewidth}
		\includegraphics[scale=0.21]{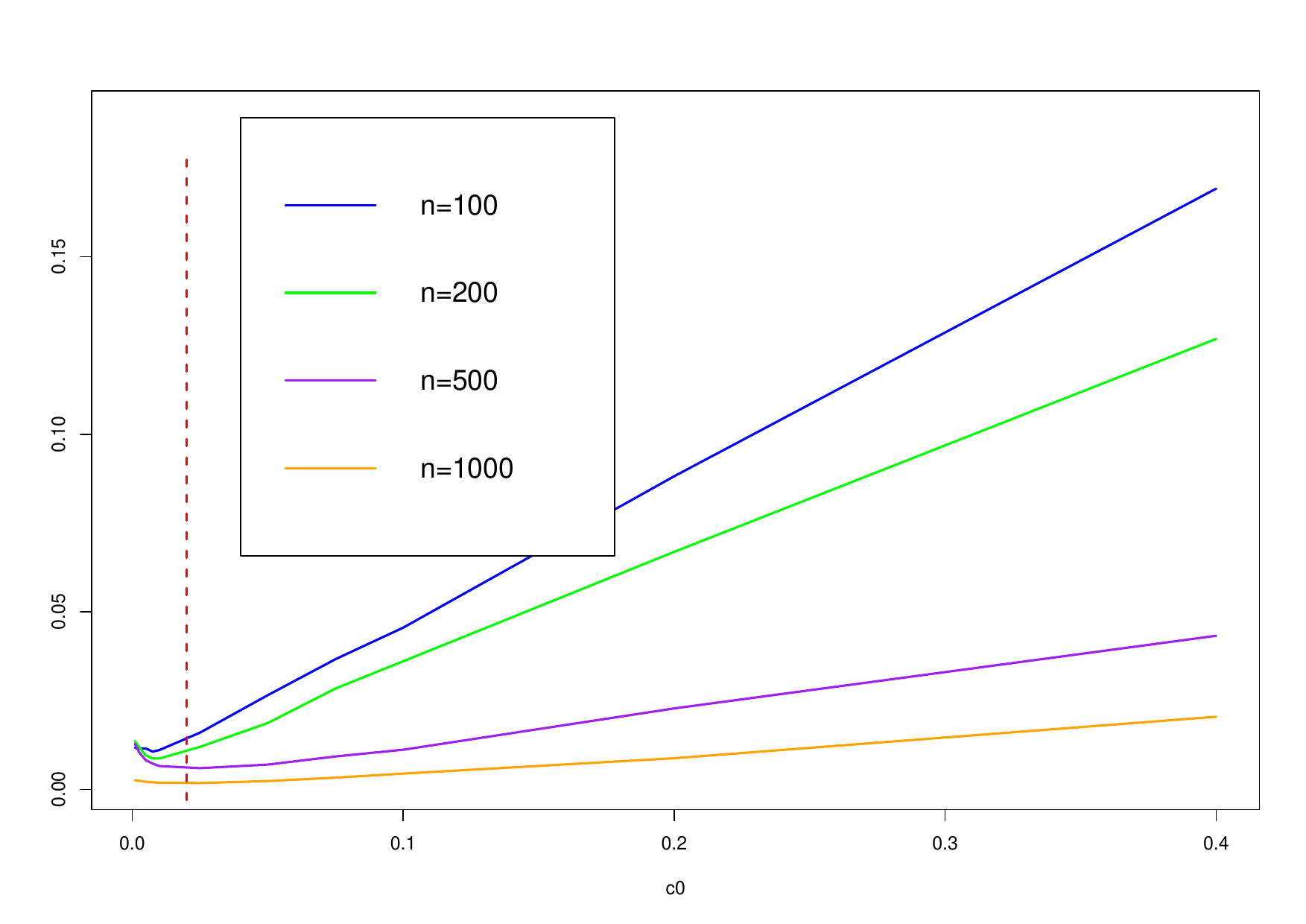}  
		\subcaption{\scriptsize{$X_i \sim U([-5,5])$}}
	\end{minipage}
	\vspace{-0.4cm}  
	\caption{Model M1. Plot of the Monte Carlo estimation of the function $c_{0} \in G_{c_{0}}\longmapsto\mathcal{R}(c_0)$, based on 50 runs, for $x=1.25$, the Gaussian and the uniform designs and by using the Epanechnikov kernel for $n \in \left\{ 100 ; 200; 500; 1000  \right\}$ corresponding to the line in blue, green, violet and orange, respectively. The red vertical line displays the point $c_0 =0.04$.}
	\label{fig:simulation:regression.circular:model1.ex4.crit.find.c01=c02.XGauss.XUnif:x=1.25}
\end{figure}
\leavevmode \\[0.2cm]  
To further study an influence of the kernel rule, we consider the Gaussian kernel. The associated numerical illustrations are provided in Figure~\ref{fig:simulation:regression.circular:GaussKernel:model1.ex4.crit.find.c01=c02.XUnif-XGauss.x=-2}  
for $X \sim U([-5,5])$ and 
for $X \sim \mathcal{N}(0,1.5)$. 
\begin{figure}[h!]
	\begin{minipage}{.45\linewidth}
		\hspace{-0.8cm}
		\includegraphics[scale=0.21]{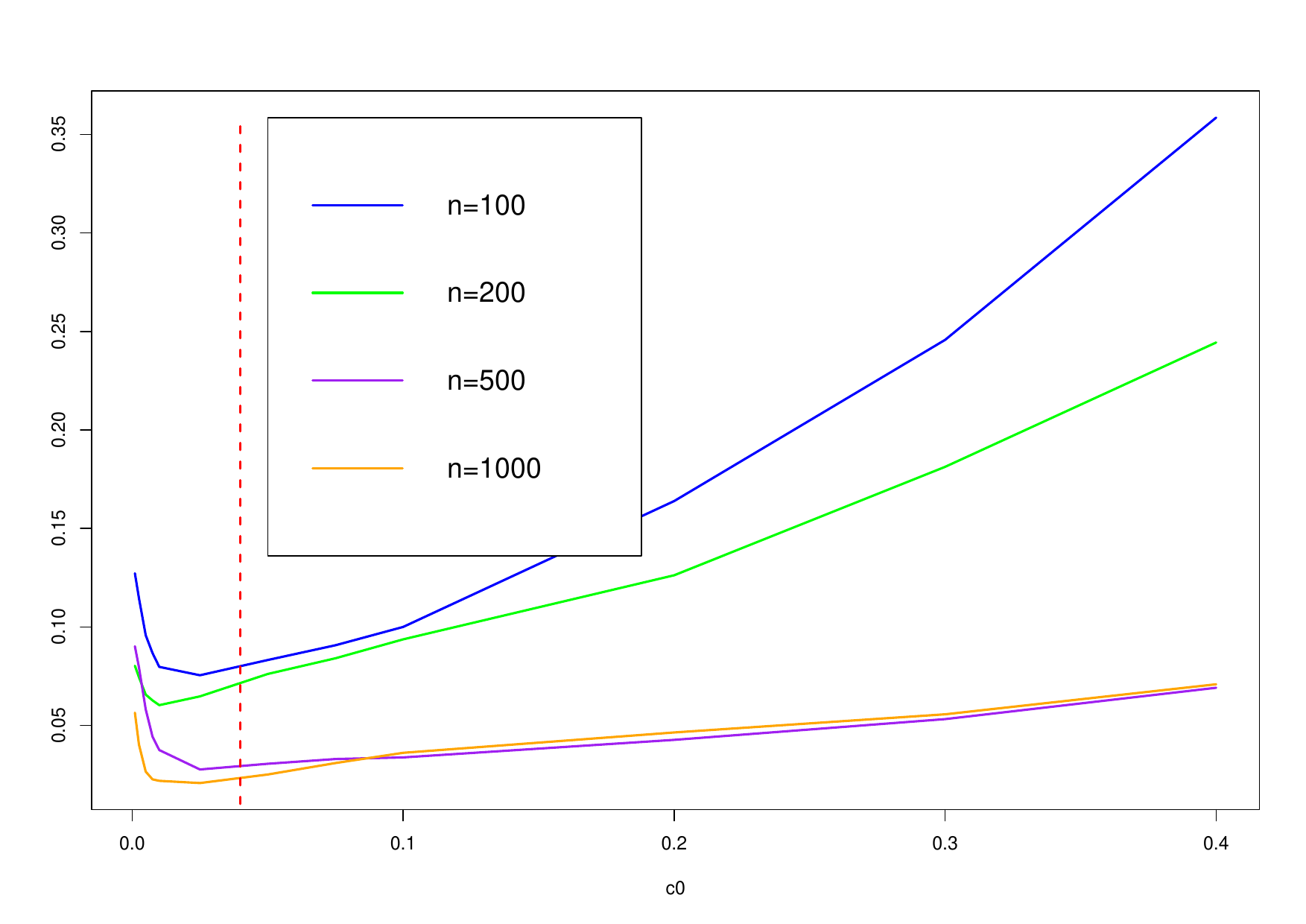}
		\subcaption{\scriptsize{$X_i \sim U([-5,5])$. }}
	\end{minipage}
	\hspace{0.01cm}
	\begin{minipage}{.45\linewidth}
		\includegraphics[scale=0.21]{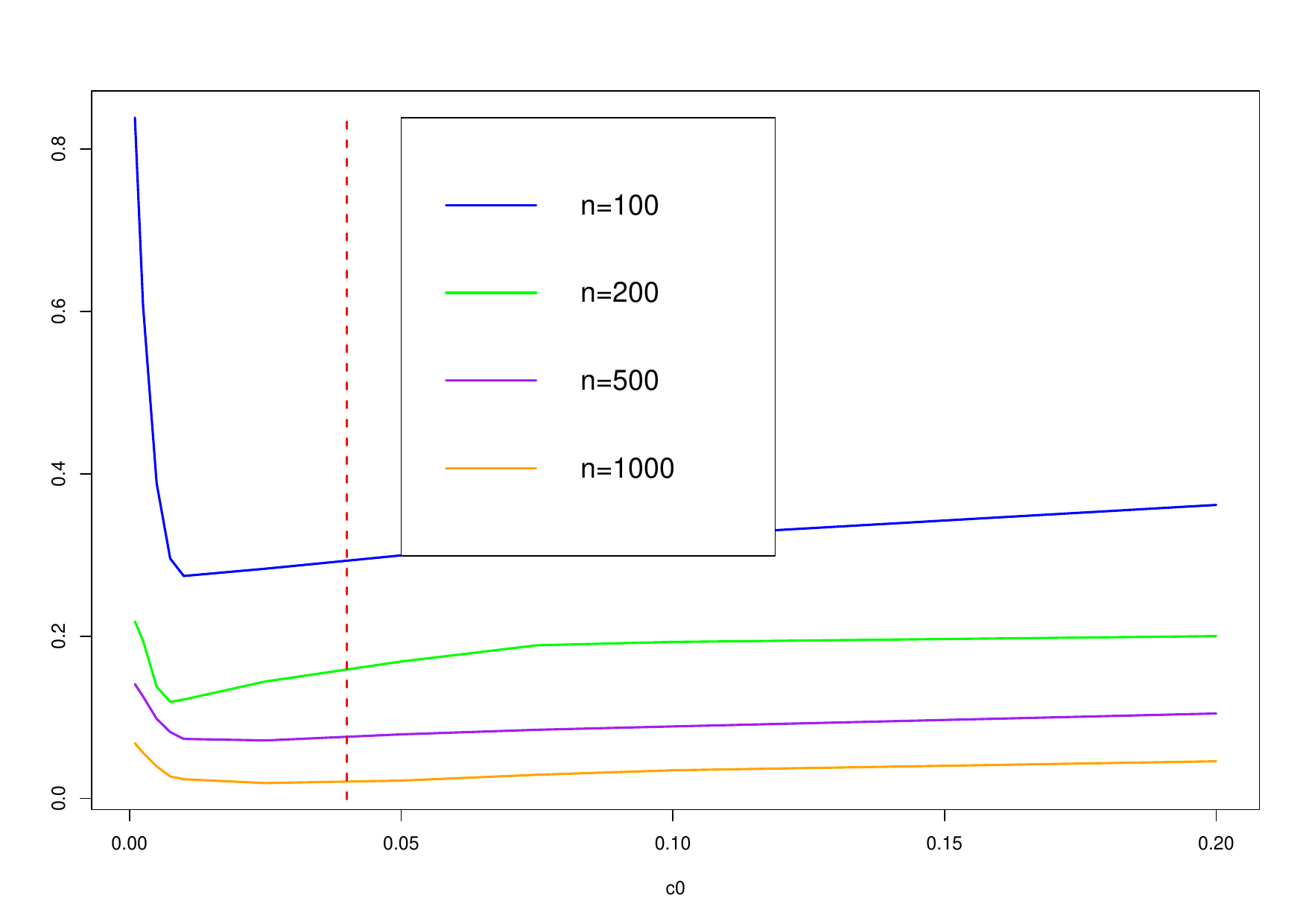}
		\subcaption{\scriptsize{$X_i \sim \mathcal{N}(0,1.5)$. }}
	\end{minipage}
	\vspace{-0.4cm}  
	\caption{Model M1. Plot of the Monte Carlo estimation of the function $c_{0} \in G_{c_{0}}\longmapsto\mathcal{R}(c_0)$, based on 50 runs, for $x=-2$, the Gaussian and the uniform designs and by using the Gaussian kernel for $n \in \left\{ 100 ; 200; 500; 1000  \right\}$ corresponding to the line in color of blue, green, violet and orange, respectively.}
	\label{fig:simulation:regression.circular:GaussKernel:model1.ex4.crit.find.c01=c02.XUnif-XGauss.x=-2}
\end{figure}
This brief numerical study shows that the choice $c_{0,1} = c_{0,2} = 0.04$ is convenient for each numerical scheme.
\subsubsection{The case $c_{0,1}\not= c_{0,2}$}
We do no longer assume that $c_{0,1}= c_{0,2}$.
For $n = 200$, we compute the risk $\mathcal{R}$ defined in \eqref{def:risk} as a function of $(c_{0,1} , c_{0,2})$ on the following discretization grid
\begin{equation*}
G_{c_0} := \left\{ 0.001 ; \hspace{0.1cm} 0.005;  \hspace{0.1cm} 0.01; \hspace{0.1cm} 0.025; \hspace{0.1cm} 0.05; \hspace{0.1cm} 0.075 ; \hspace{0.1cm} 0.1 ; \hspace{0.1cm} 0.2 ; \hspace{0.1cm} 0.3 ; \hspace{0.1cm} 0.4  
\right\}.
\end{equation*}
We denote $\mathcal{R}\equiv\mathcal{R} (c_{0,1} , c_{0,2})$. The associated numerical illustrations are provided in Figure~\ref{fig:calibration:c01-c02:risk:model1_ex4_XGauss.n200.MC50} and Figure~\ref{fig:calibration:c01-c02:risk:model1_ex4_XUnif.n200.MC50} for the case $X \sim \mathcal{N}(0,1.5)$ and $X\sim U([-5 , 5])$,  respectively. 
\begin{figure}[h!]
	\hspace{-1.5cm}
	\begin{minipage}{.52\linewidth}
		\includegraphics[scale=0.22]{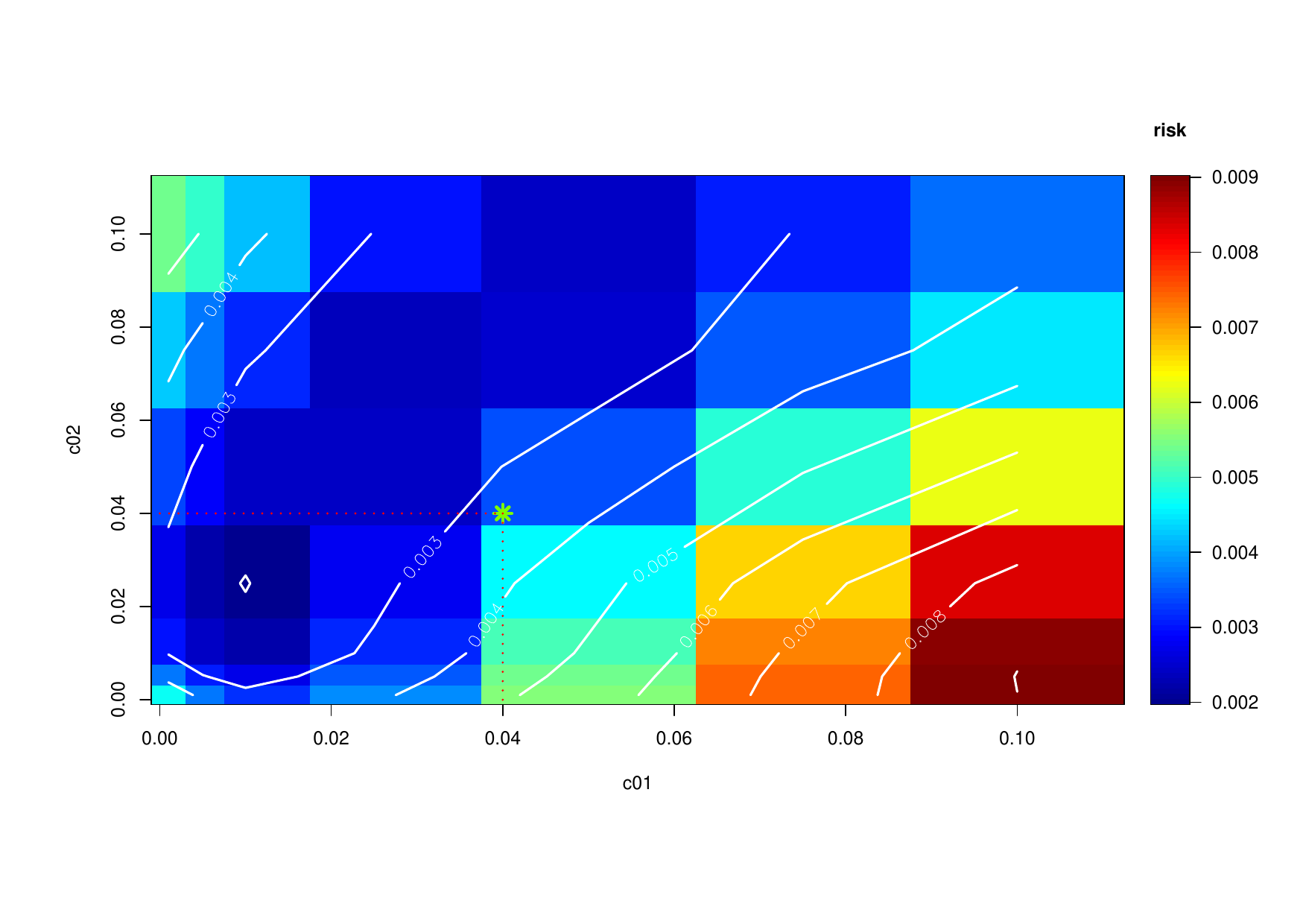} 
		\vspace{-1cm}
		\subcaption{\scriptsize{estimation at $x = -2$}}  
	\end{minipage}
	\begin{minipage}{.45\linewidth}
		\hspace{-0.2cm}
		\includegraphics[scale=0.22]{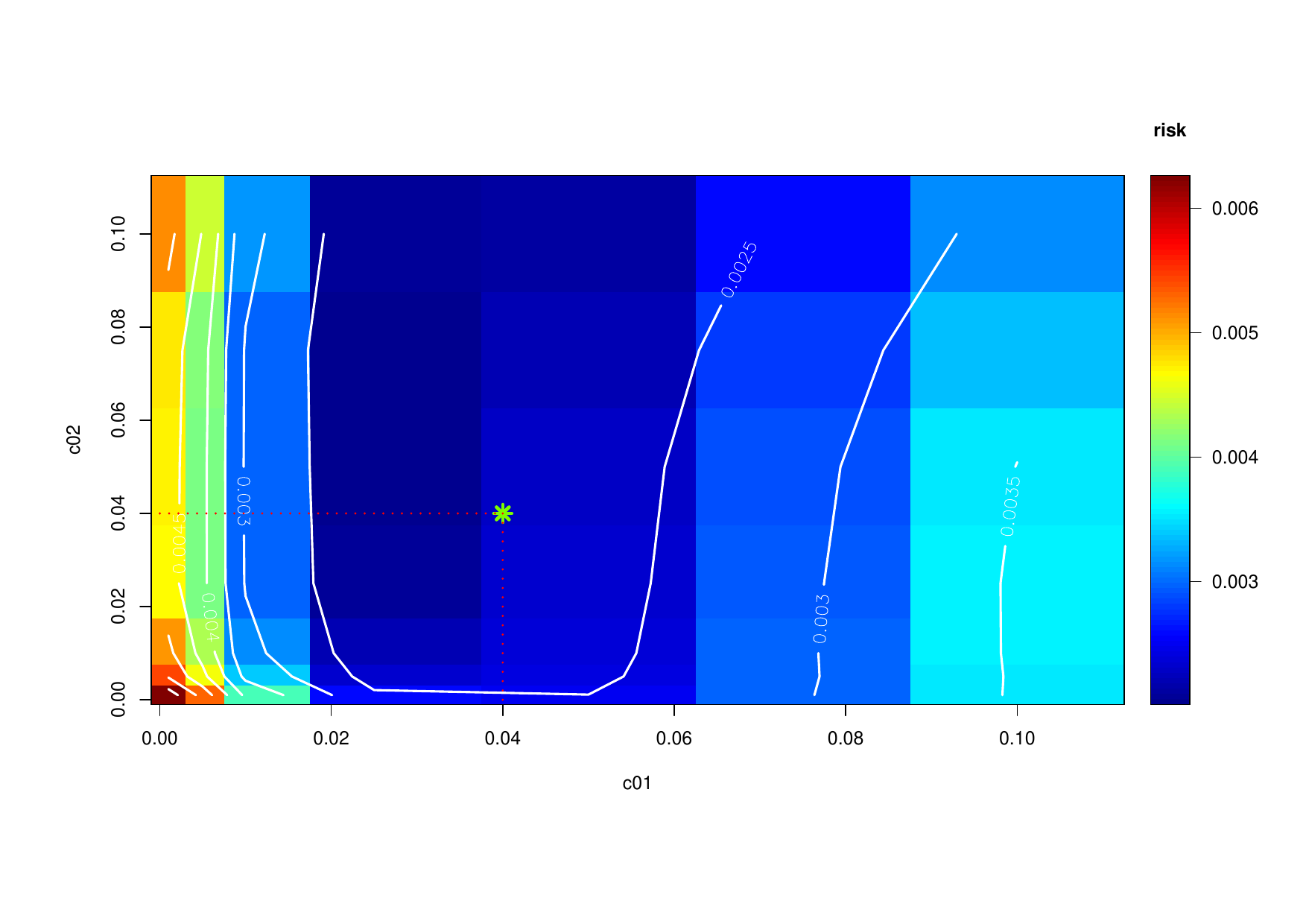}
		\vspace{-1cm}
		\subcaption{\scriptsize{estimation at $x = 1.25$}}
	\end{minipage}
	\caption{Model M1. 2D-representation of the Monte Carlo estimation of the function $(c_{0,1},c_{0,1}) \in G_{c_{0}}\times G_{c_{0}}\longmapsto\mathcal{R}(c_{0,1} , c_{0,2})$, based on 50 runs, with $X_i \sim \mathcal{N}(0,1.5)$, $n=200$ at $x = -2$ and $x = 1.25$ by using the Epanechnikov kernel. We display the specific point $c_{0,1} = c_{0,2} = 0.04$.}
	\label{fig:calibration:c01-c02:risk:model1_ex4_XGauss.n200.MC50}
\end{figure}
\begin{figure}[h!]
	\hspace{-1.5cm}
	\begin{minipage}{.52\linewidth}
		\includegraphics[scale=0.22]{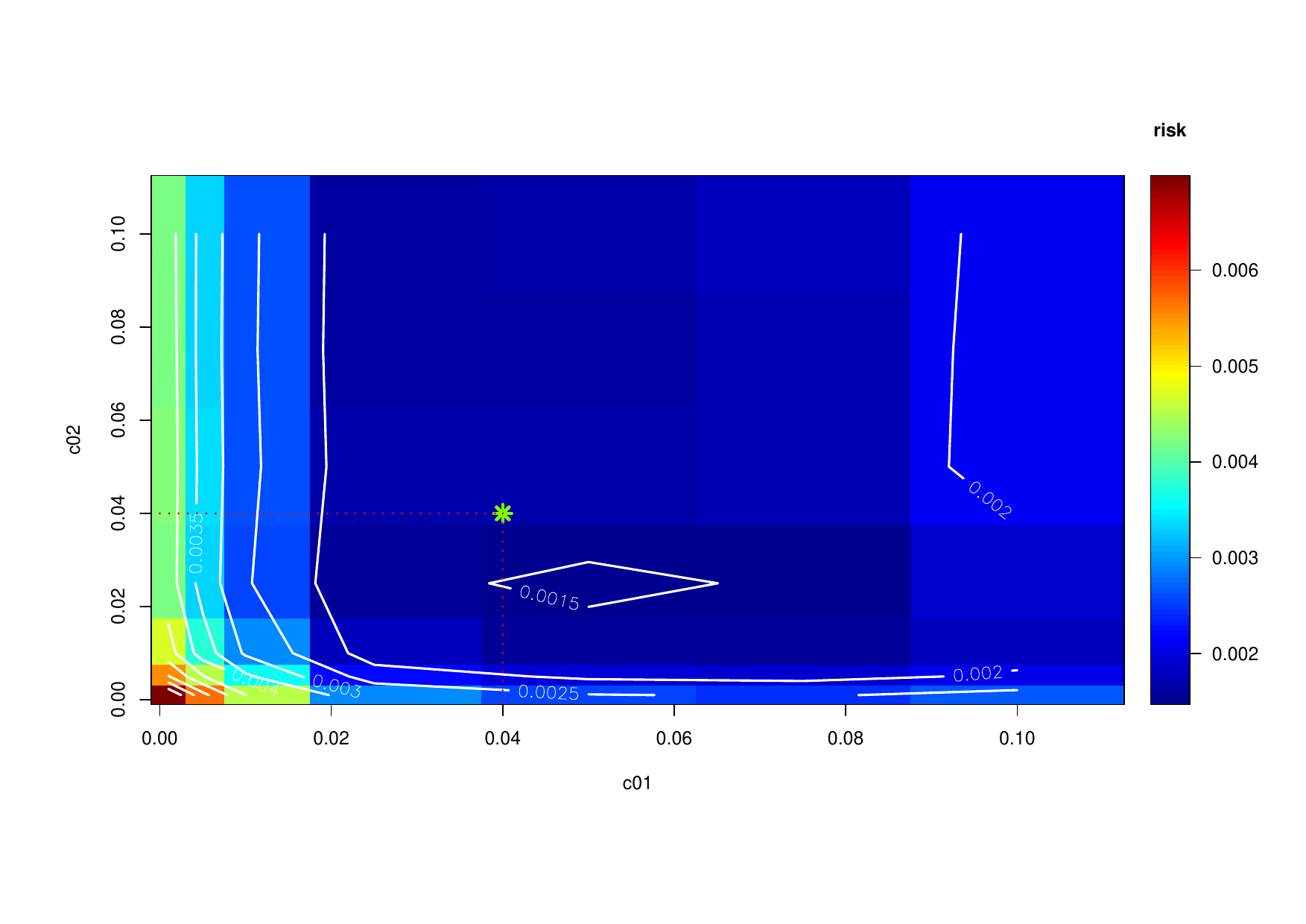}
		\vspace{-1cm}
		\subcaption{\scriptsize{estimation at $x = -2$}}
	\end{minipage}
	\begin{minipage}{.45\linewidth}
		\hspace{-0.2cm}
		\includegraphics[scale=0.22]{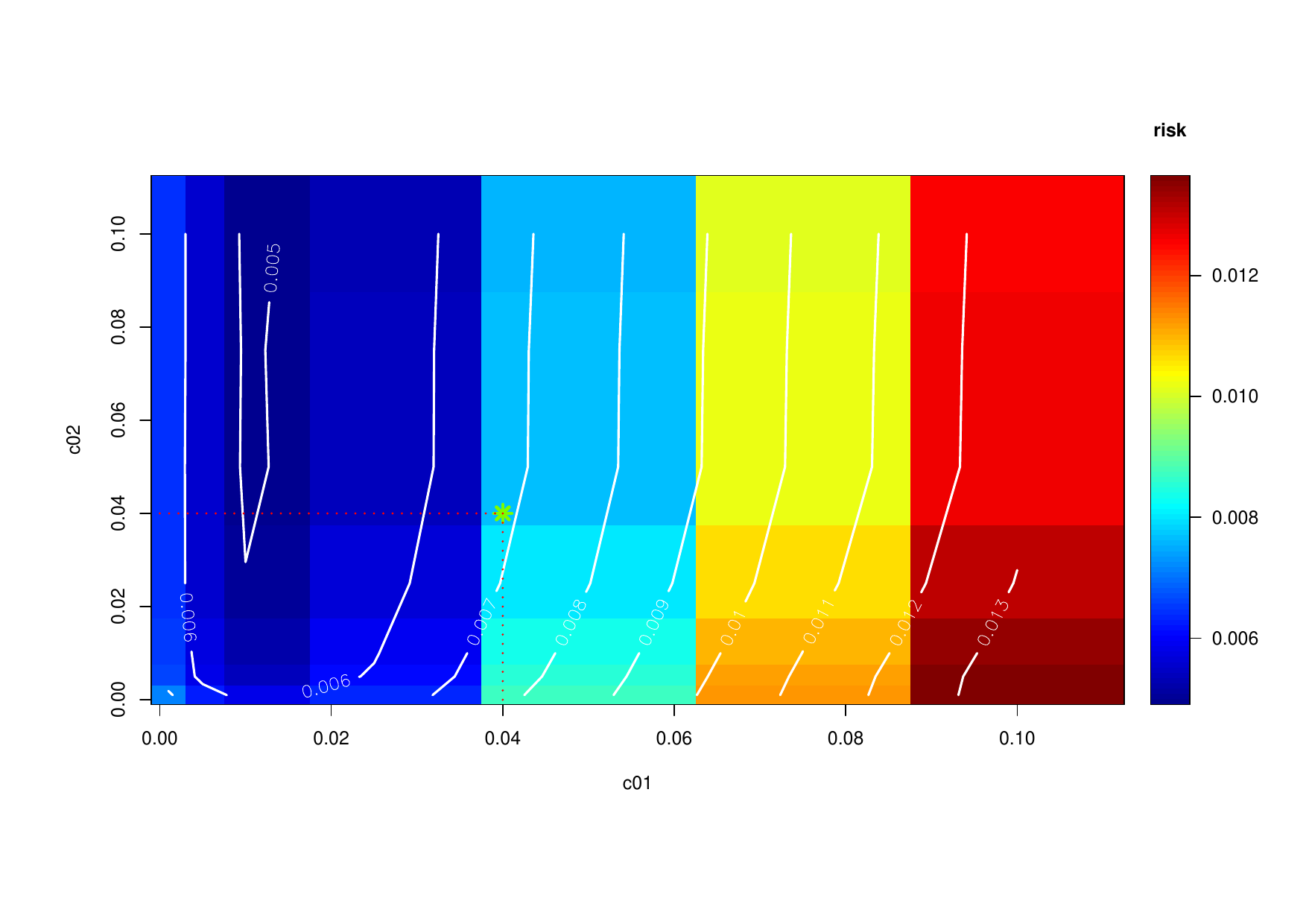}
		\vspace{-1cm}
		\subcaption{\scriptsize{estimation at $x = 1.25$}}
	\end{minipage}
	\caption{Model M1. 2D-representation of the Monte Carlo estimation of the function $(c_{0,1},c_{0,1}) \in G_{c_{0}}\times G_{c_{0}}\longmapsto\mathcal{R}(c_{0,1} , c_{0,2})$, based on 50 runs with $X_i \sim U([-5 , 5])$, $n=200$ at $x = -2$ and $x = 1.25$ by using the Epanechnikov kernel. We display the specific point $c_{0,1} = c_{0,2} = 0.04$.}
	\label{fig:calibration:c01-c02:risk:model1_ex4_XUnif.n200.MC50}
\end{figure}
\leavevmode \\[0.1cm]
Even if it is not the best one, the choice of $c_{0,1} = c_{0,2} = 0.04$ is reasonable. For sake of simplicity, we fix $c_{0,1} = c_{0,2} = 0.04$ for subsequent numerical simulations.
\subsection{Numerical results}
We now illustrate the numerical performances obtained by our methodology for models M1, M2 and M3 by using the Epanechnikov kernel. They are also compared to other approaches. A similar scheme is conducted by using the Gaussian kernel. Remember that in the following numerical experiments, our estimate is tuned with $c_{0,1} = c_{0,2} = 0.04$. 
We first display several graphs to illustrate numerical performances obtained by our methodology, denoted GL, by using the Epanechnikov kernel.
More precisely, we display boxplots in Figures~\ref{fig:simulation:regression.circular:EpaKernel.model1.ex4.boxplot.n200.XUnif.vsNW.vsLL} and~\ref{fig:simulation:regression.circular:EpaKernel.model1.ex4.boxplot.n200.XGauss.vsNW.vsLL} summarizing our numerical results for model M1 in the case $X \sim U([-5,5])$ and in the case $X \sim \mathcal{N}(0,1.5)$, respectively. In both cases, for model M1, we estimate $m(x)$ at $x = -2$ and at $x = 1.25$.
Figure~\ref{fig:simulation:EpaKernel.model1.ex5:boxplot:GL.vsNW.vsLL} shows simulations for model M2 with $X \sim \mathcal{N}(0,1.5)$ 
and we estimate $m(x)$ at $x = 1.05$.
Figure~\ref{fig:simulation:EpaKernel.model2.ex1:boxplot:GL.vsNW.vsLL} shows simulations for model M3 with $X \sim U([0,1])$ and we estimate $m(x)$ at $x = 0.95$.
\begin{figure}[h!]
	\begin{minipage}{.45\linewidth}
		\hspace{-0.8cm}
		\includegraphics[scale=0.21]{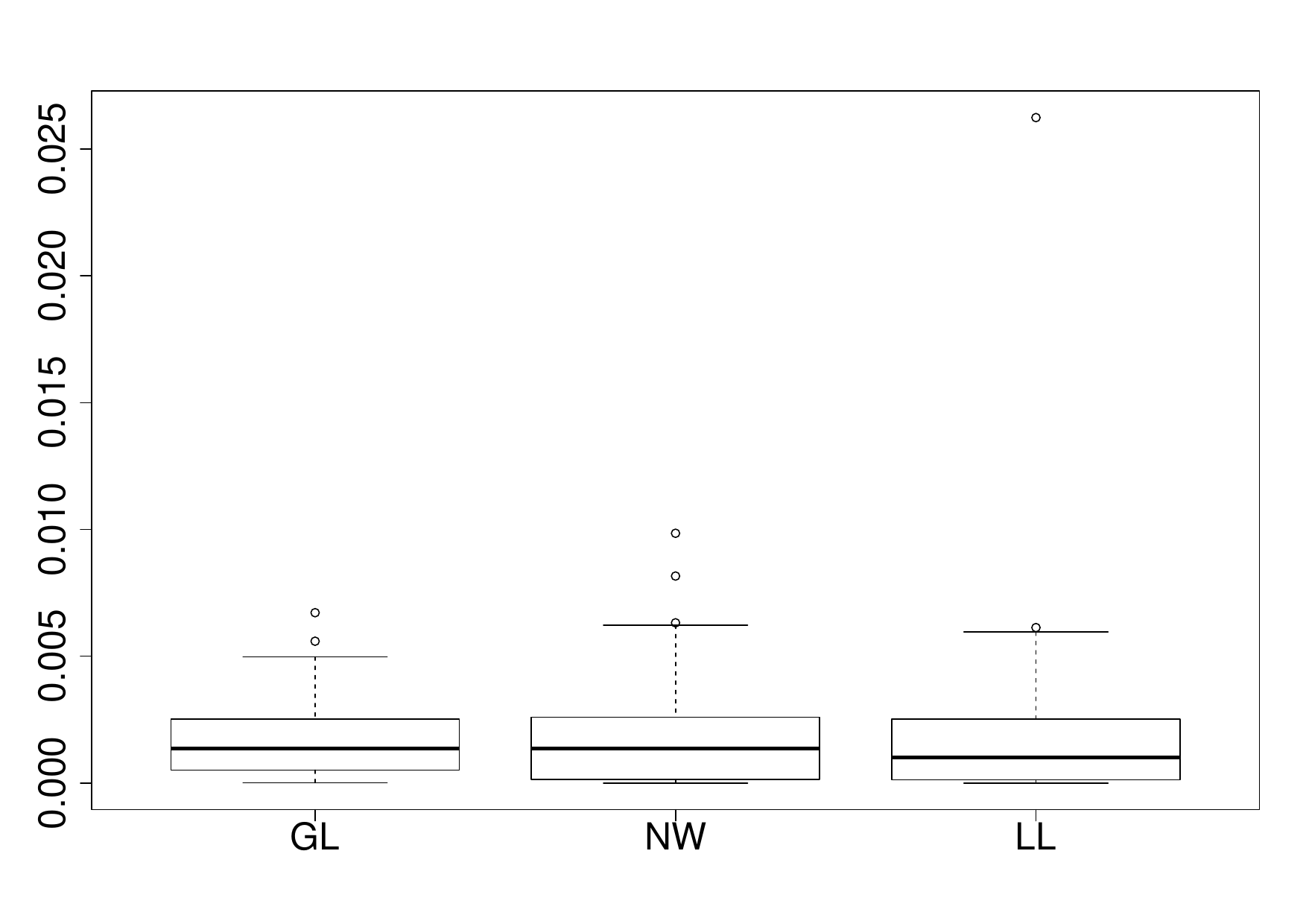}
		\vspace{-0.8cm}
		\subcaption{\scriptsize{estimation at $x = -2$}}
	\end{minipage}
	\hspace{0.1cm}
	\begin{minipage}{.45\linewidth}
		\includegraphics[scale=0.21]{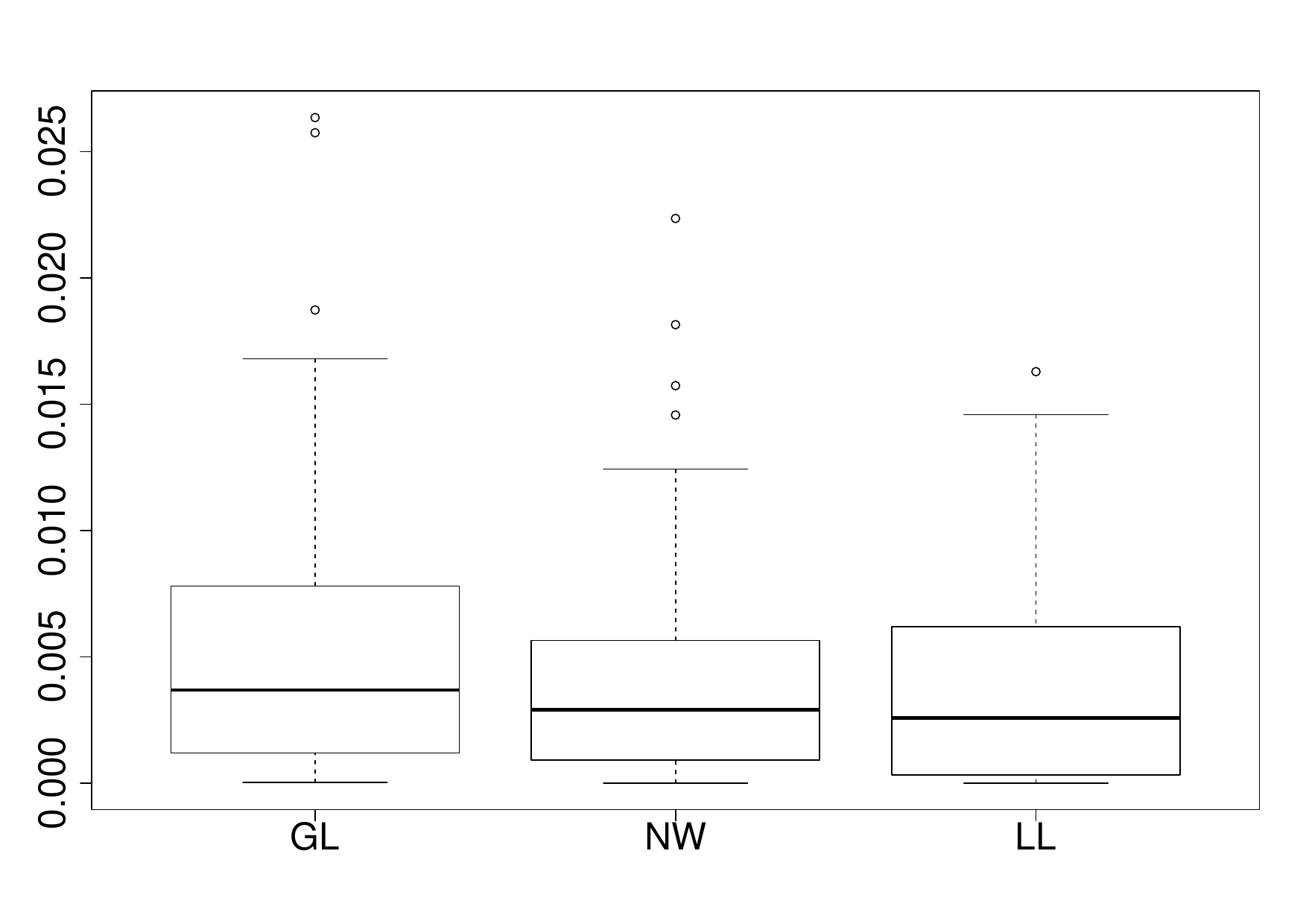}
		\vspace{-0.8cm}
		\subcaption{\scriptsize{estimation at $x = 1.25$}}
	\end{minipage}
\vspace{-0.2cm}
	\caption{Model M1. Boxplots of the estimated risk with $50$ runs for the GL, NW and LL methodologies (from left-hand side to right-hand side on each plot (a) and (b) for $n=200$ and $X \sim U([-5,5])$ by using the Epanechnikov kernel.}
	\label{fig:simulation:regression.circular:EpaKernel.model1.ex4.boxplot.n200.XUnif.vsNW.vsLL}
\end{figure}
\begin{figure}[h!]
	\vspace{-0.5cm}
	\begin{minipage}{.45\linewidth}
		\hspace{-0.8cm}
		\includegraphics[scale=0.21]{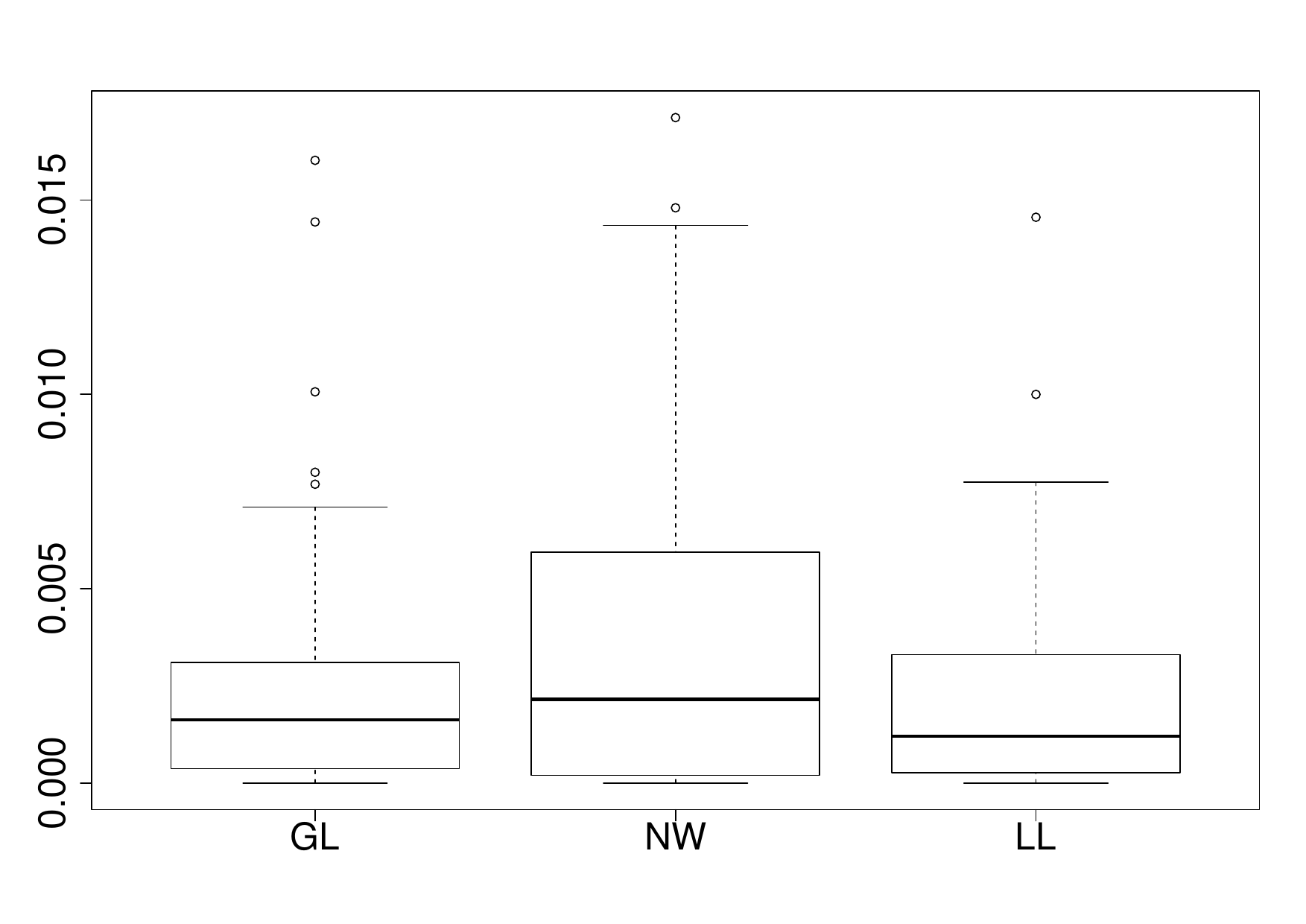}
		\vspace{-0.8cm}
		\subcaption{\scriptsize{estimation at $x = -2$}}
	\end{minipage}
	\hspace{0.1cm}
	\begin{minipage}{.45\linewidth}
		\includegraphics[scale=0.21]{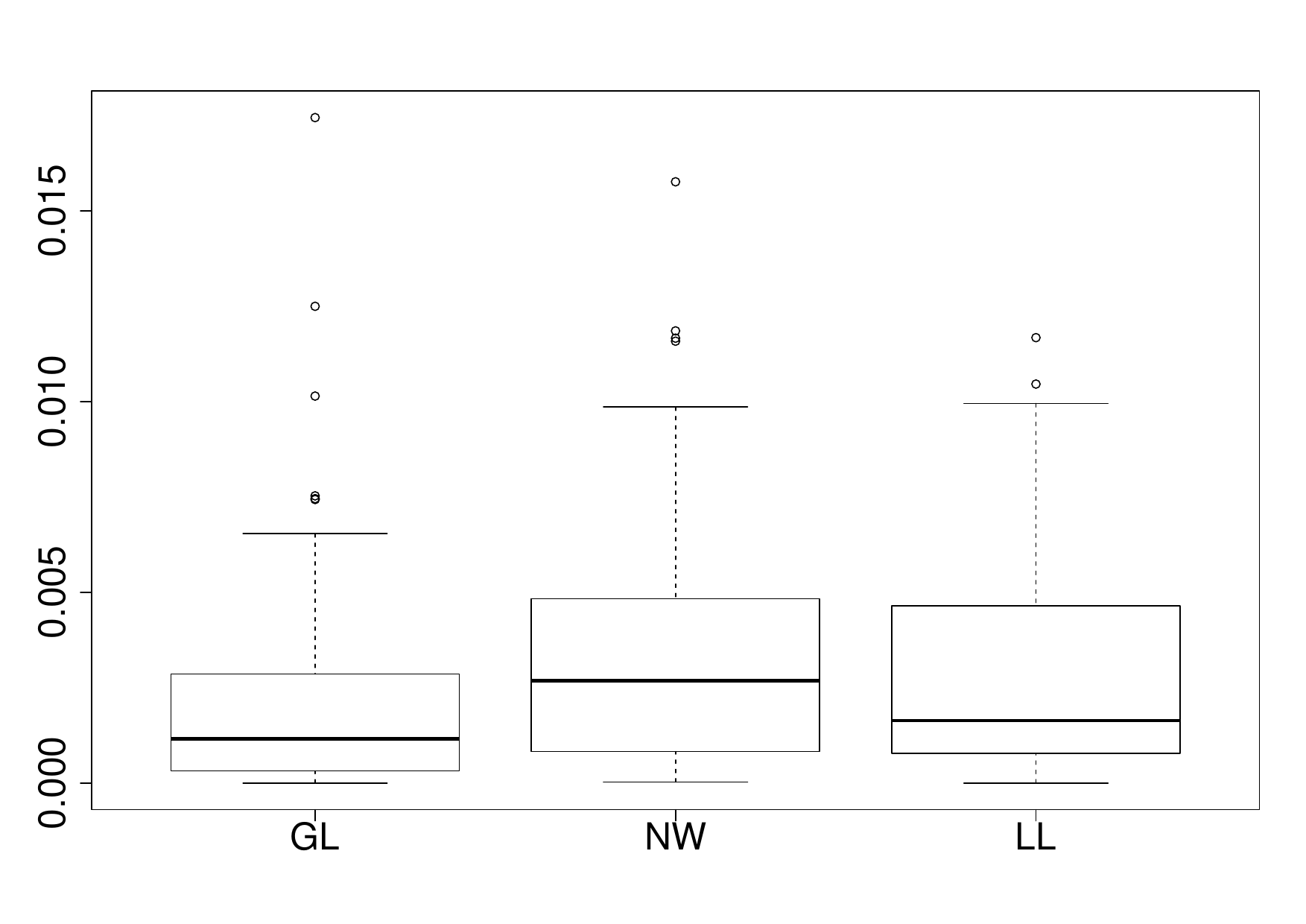}
		\vspace{-0.8cm}
		\subcaption{\scriptsize{estimation at $x = 1.25$}}
	\end{minipage}
\vspace{-0.2cm}
	\caption{Model M1. Boxplots of the estimated risk with $50$ runs for the GL, NW and LL methodologies (from left-hand side to right-hand side on each plot (a) and (b) for $n=200$ and $X \sim \mathcal{N}(0,1.5)$ by using the Epanechnikov kernel.}  
	\label{fig:simulation:regression.circular:EpaKernel.model1.ex4.boxplot.n200.XGauss.vsNW.vsLL}
\end{figure}
\begin{figure}[h!]
	\hspace{-0.5cm}
	\begin{minipage}{.48\linewidth}
		\centering
		\includegraphics[scale=0.21]{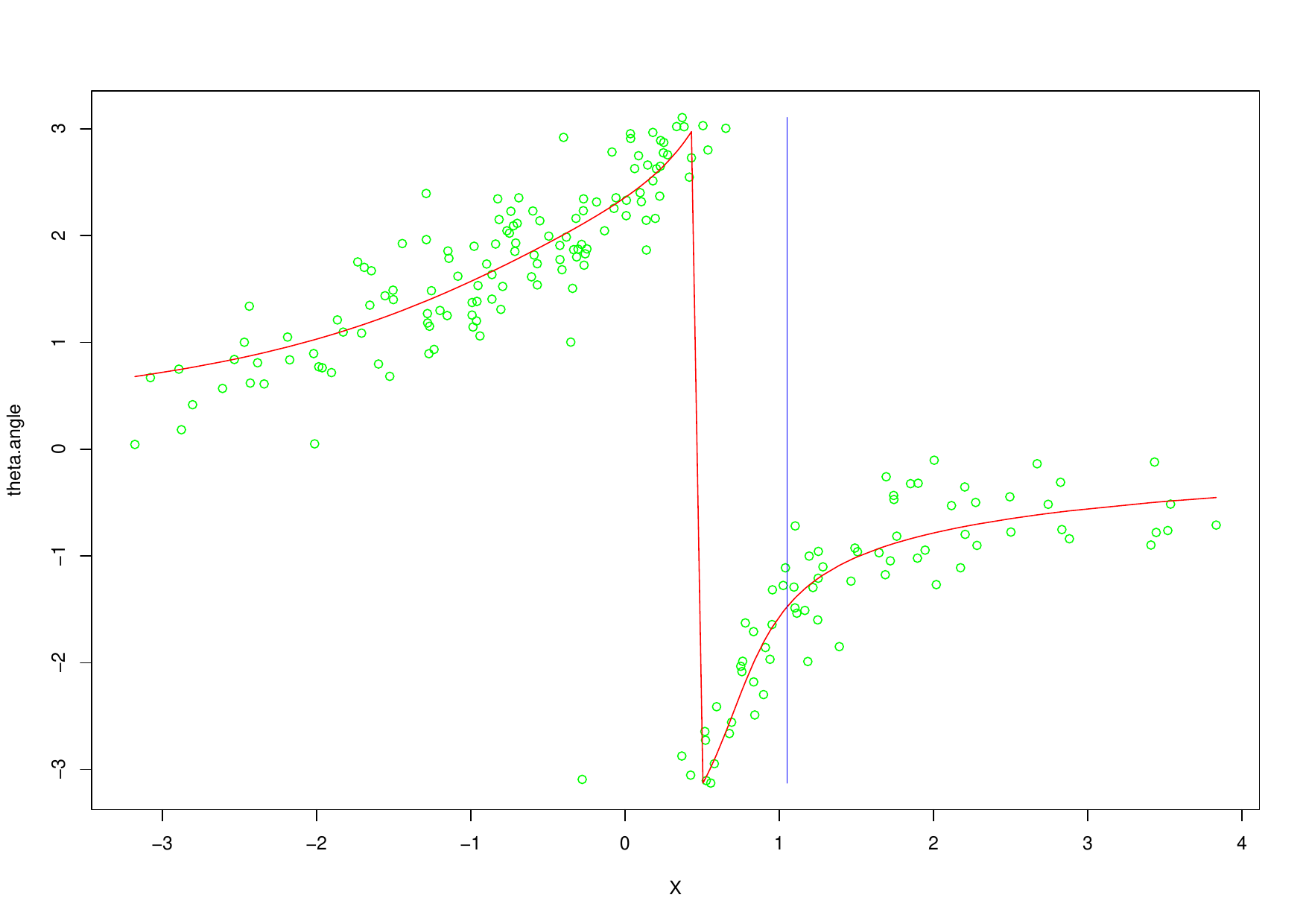}
		\vspace{-0.6cm}
		\subcaption{\scriptsize{$X_i \sim \mathcal{N}(0,1.5)$}}	
		\label{fig:simulation:EpaKernel.model1.ex5:fig.a}
	\end{minipage}
	\hspace{0.2cm}
	\begin{minipage}{.5\linewidth}
		\centering
		\includegraphics[scale=0.21]{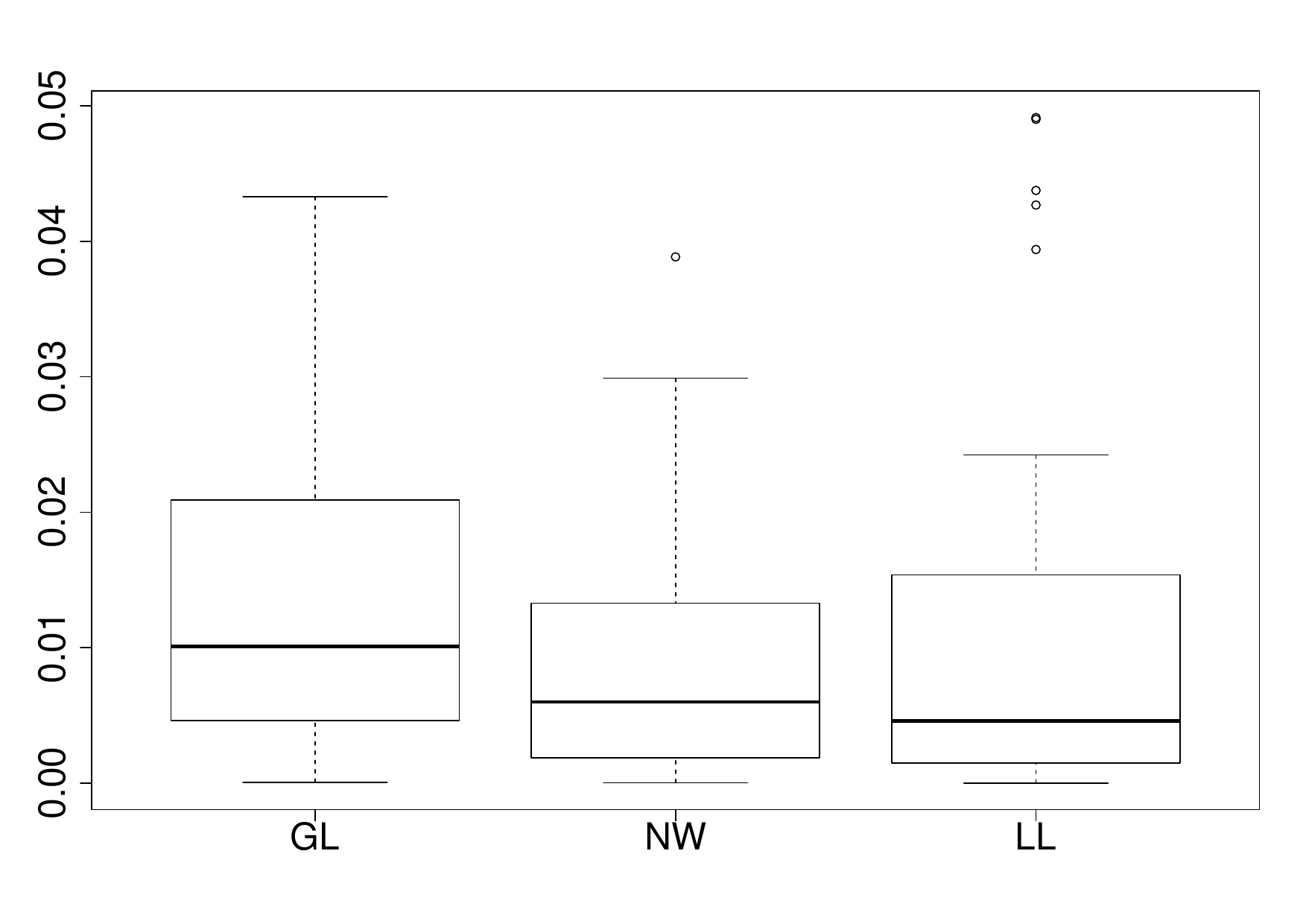}
		\vspace{-0.6cm}
		\subcaption{\scriptsize{Boxplots for GL, NW and LL methodologies}}
		\label{fig:simulation:EpaKernel.model1.ex5:fig.b}
	\end{minipage}
\vspace{-0.2cm}  
	\caption{(a): Model M2. Simulated data $(\Theta_i)_{i=1}^{n}$ of model M2 (green points) with $n=200$ and $X_i \sim \mathcal{N}(0,1.5)$. The red curve represents the true regression function $m$, while the blue vertical line displays the point $x = 1.05$; (b): Boxplots of the estimated risk with $50$ runs for the GL, NW and LL methodologies (from left-hand side to right-hand side by using the Epanechnikov kernel.}	\label{fig:simulation:EpaKernel.model1.ex5:boxplot:GL.vsNW.vsLL}
\end{figure}
\begin{figure}[h!]
	\vspace{-0.5cm}
	\hspace{-0.5cm}
	\begin{minipage}{.48\linewidth}
		\centering
		\includegraphics[scale=0.21]{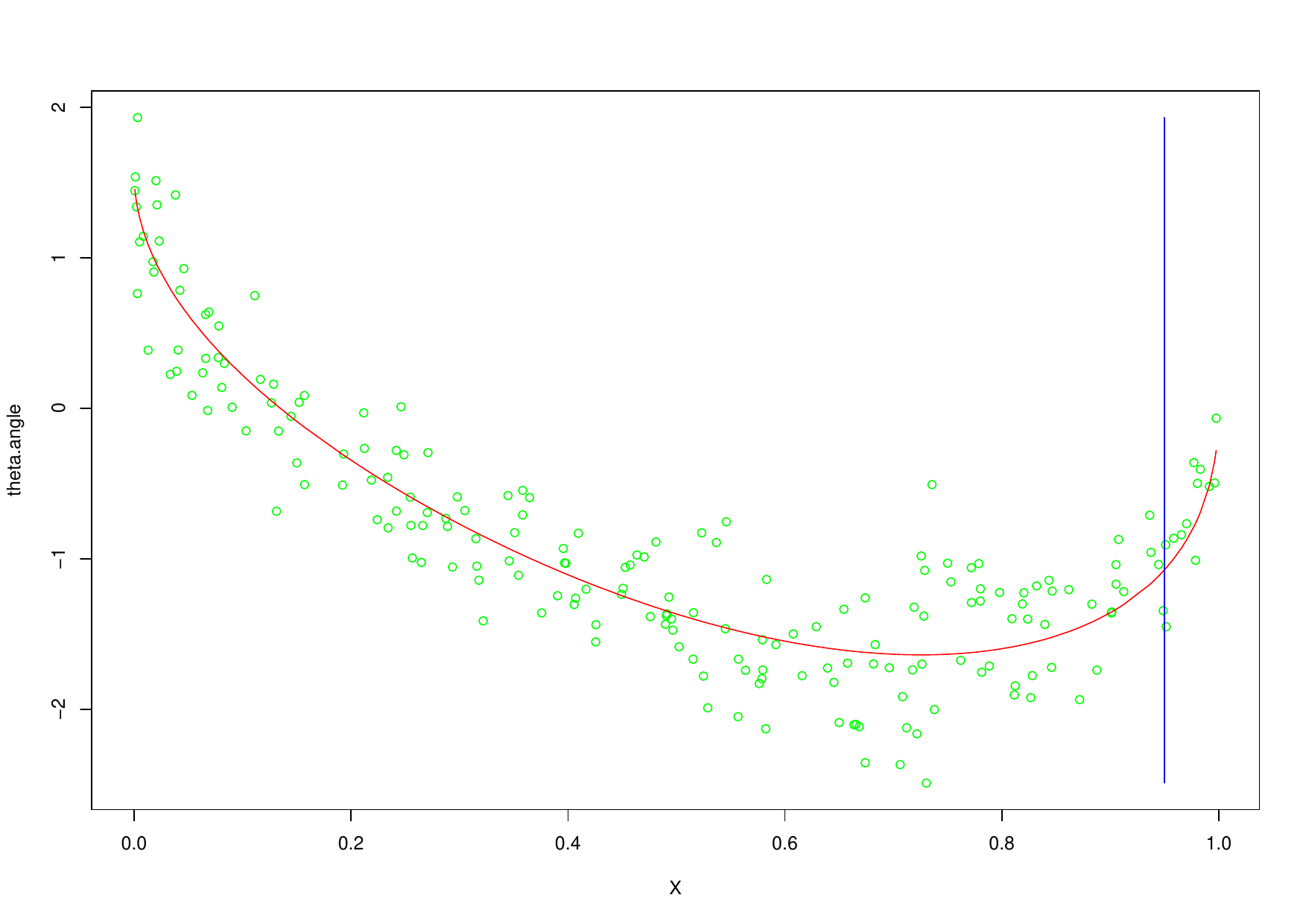}
		\vspace{-0.6cm}
		\subcaption{\scriptsize{$X_i \sim U([0,1])$}}	
		\label{fig:simulation:EpaKernel.model2.ex1:fig.a}
	\end{minipage}
	\hspace{0.1cm}
	\begin{minipage}{.5\linewidth}
		\centering
		\includegraphics[scale=0.21]{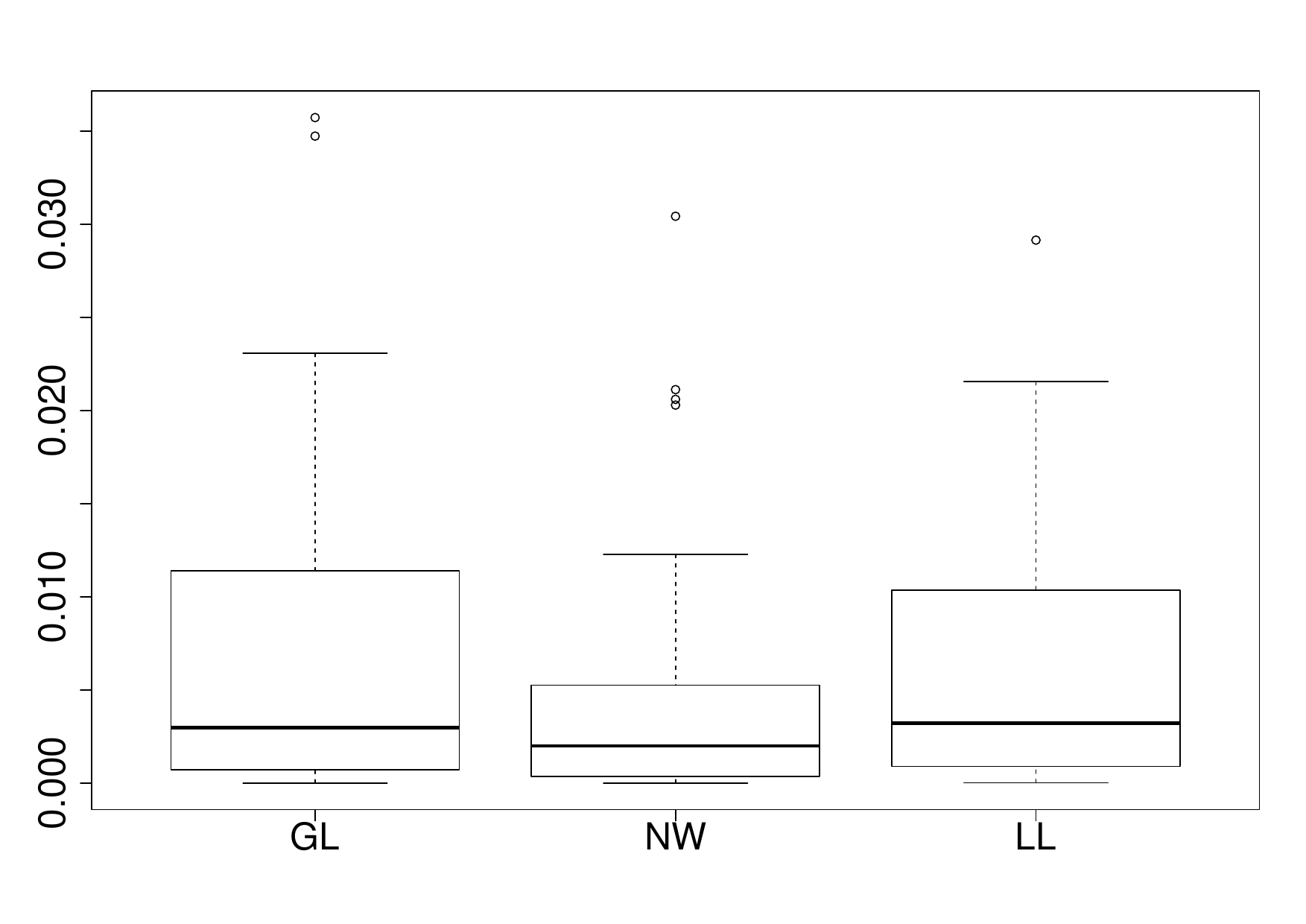}
		\vspace{-0.6cm}
		\subcaption{\scriptsize{Boxplots for GL, NW and LL methodologies}}
		\label{fig:simulation:EpaKernel.model2.ex1:fig.b}
	\end{minipage}
\vspace{-0.2cm}
	\caption{(a): Model M3. Simulated data $(\Theta_i)_{i=1}^{n}$ of Model M3 (green points) with $n=200$ and $X_i\sim U([0,1])$. The red curve represents the true regression function $m$, while the blue vertical line displays the point $x = 0.95$; (b): Boxplots of the estimated risk with $50$ runs for the GL, NW and LL methodologies (from left-hand side to right-hand side) by using the Epanechnikov kernel. }	\label{fig:simulation:EpaKernel.model2.ex1:boxplot:GL.vsNW.vsLL}
\end{figure}
Moreover, to make a comparison with our adaptive estimator, as proposed in~\cite{Marzio-Panzera-Taylor}, we also compute the Nadaraya-Watson (NW) estimator $\widehat{m}_{h}^{NW}$ and the version of the local linear (LL) estimator proposed by \cite[Section 4.2]{Marzio-Panzera-Taylor}) denoted by $\widehat{m}_{h}^{LL}$. Cross-Validation is used to select the bandwidth parameter for $\widehat{m}_{h}^{NW}$ and $\widehat{m}_{h}^{LL}$. 
Boxplots in Figures~\ref{fig:simulation:regression.circular:EpaKernel.model1.ex4.boxplot.n200.XUnif.vsNW.vsLL}, ~\ref{fig:simulation:regression.circular:EpaKernel.model1.ex4.boxplot.n200.XGauss.vsNW.vsLL}, ~\ref{fig:simulation:EpaKernel.model1.ex5:boxplot:GL.vsNW.vsLL} and ~\ref{fig:simulation:EpaKernel.model2.ex1:boxplot:GL.vsNW.vsLL} show that the performances of our estimator are quite satisfying. 

We finally repeat the previous numerical experiments but with the use of the Gaussian kernel: Figures~\ref{fig:simulation:regression.circular:Gausskernel:model1.ex4.boxplot.n200.XUnif.vsNW.vsLL} 
and~\ref{fig:simulation:regression.circular:GaussKernel:model1.ex4.boxplot.n200.XGauss.vsNW.vsLL} are the analogs of Figures~\ref{fig:simulation:regression.circular:EpaKernel.model1.ex4.boxplot.n200.XUnif.vsNW.vsLL} and~\ref{fig:simulation:regression.circular:EpaKernel.model1.ex4.boxplot.n200.XGauss.vsNW.vsLL}. Figure~\ref{fig:simulation:GaussKernel:model1.ex5:boxplot:GL.vsNW.vsLL} 
shows the numerical simulation for model M2 with $X \sim \mathcal{N}(0,1.5)$ 
and we estimate $m(x)$ at $x = 1.05$. Figure~\ref{fig:simulation:GaussKernel:model2.ex1:boxplot:GL.vsNW.vsLL}  
shows the numerical simulation for model M3 with $X \sim U([0,1])$ and we estimate $m(x)$ at $x = 0.95$.
These graphs show that the performances of our adaptive estimator associated with the Gaussian kernel are quite satisfying as well.
\begin{figure}[h!]
	\begin{minipage}{.42\linewidth}
		\hspace{-1cm}
		\includegraphics[scale=0.21]{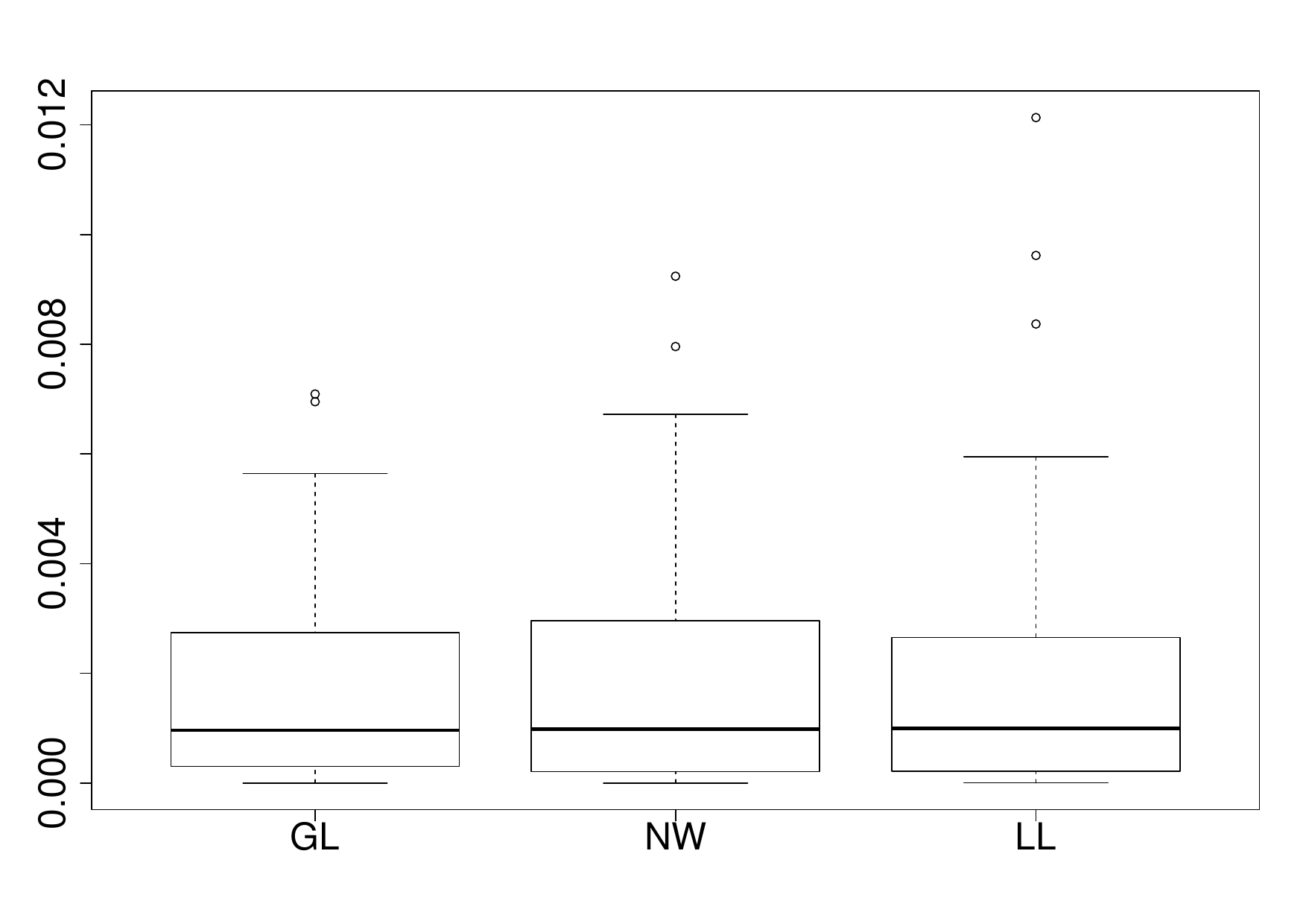}
		\vspace{-0.8cm}
		\subcaption{\scriptsize{estimation at $x = -2$}}
	\end{minipage}
	\hspace{0.01cm}
	\begin{minipage}{.45\linewidth}
		\includegraphics[scale=0.21]{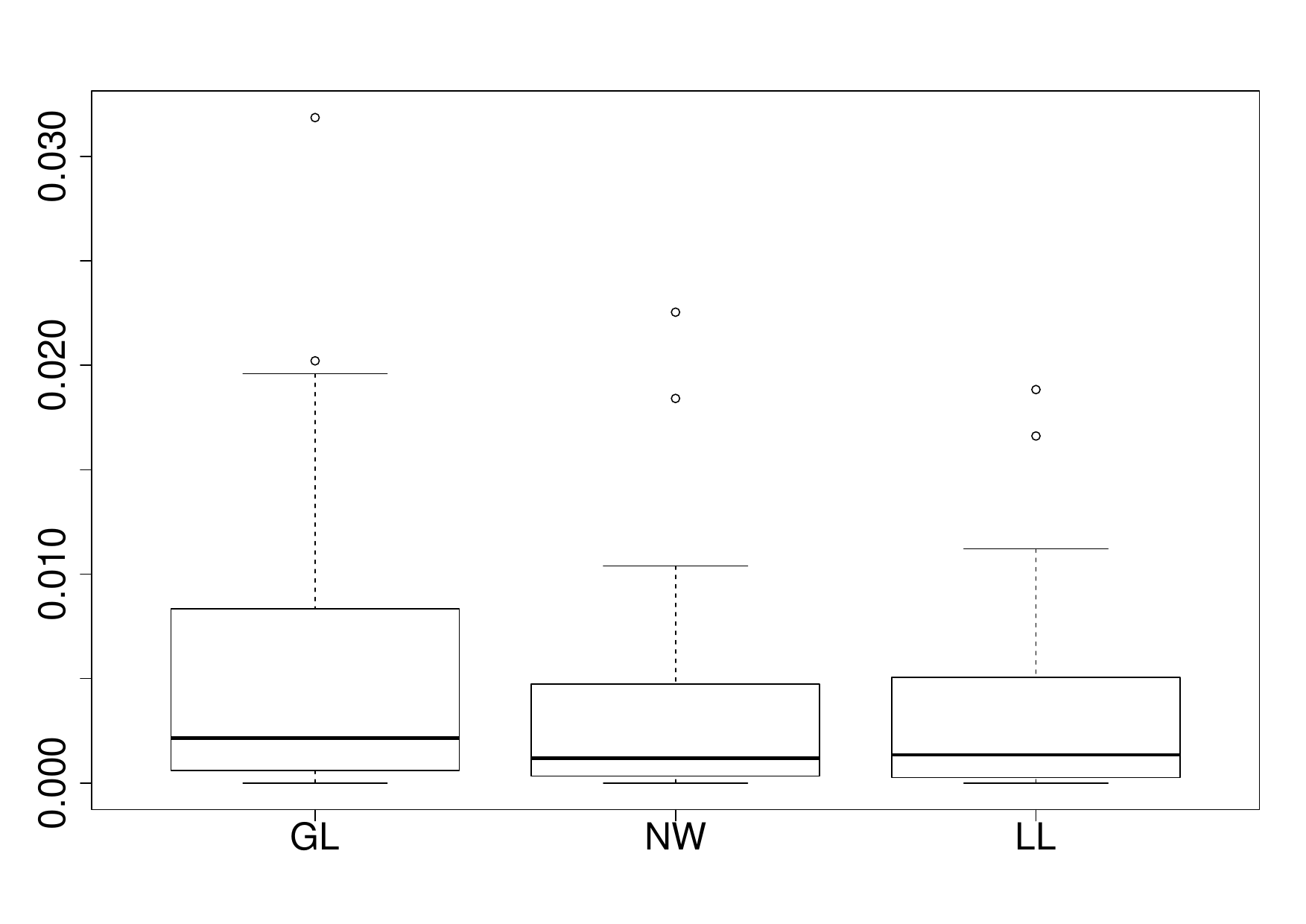}
		\vspace{-0.8cm}
		\subcaption{\scriptsize{estimation at $x = 1.25$}}
	\end{minipage}
	\vspace{-0.2cm}
	\caption{Model M1. Boxplots of the estimated risk with $50$ runs for the GL, NW and LL methodologies (from left-hand side to right-hand side on each plot (a) and (b)) for $n=200$ and $X \sim U([-5,5])$ by using the Gaussian kernel.}
	\label{fig:simulation:regression.circular:Gausskernel:model1.ex4.boxplot.n200.XUnif.vsNW.vsLL}
\end{figure}
\begin{figure}[h!]
	\vspace{-0.5cm}
	\begin{minipage}{.42\linewidth}
		\hspace{-1cm}
		\includegraphics[scale=0.21]{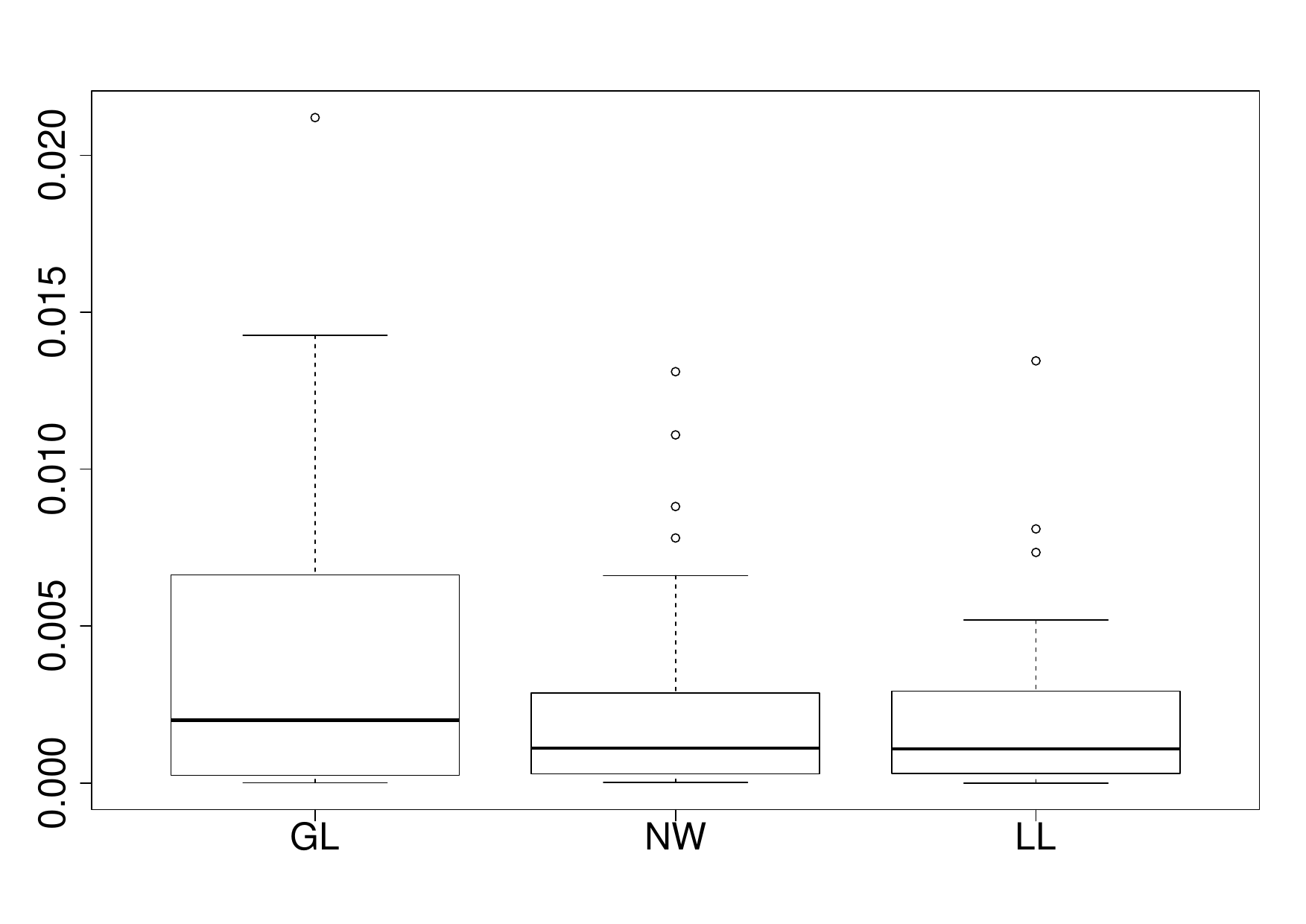}
		\vspace{-0.8cm}
		\subcaption{\scriptsize{estimation at $x = -2$}}
	\end{minipage}
	\hspace{0.01cm}
	\begin{minipage}{.45\linewidth}
		\includegraphics[scale=0.21]{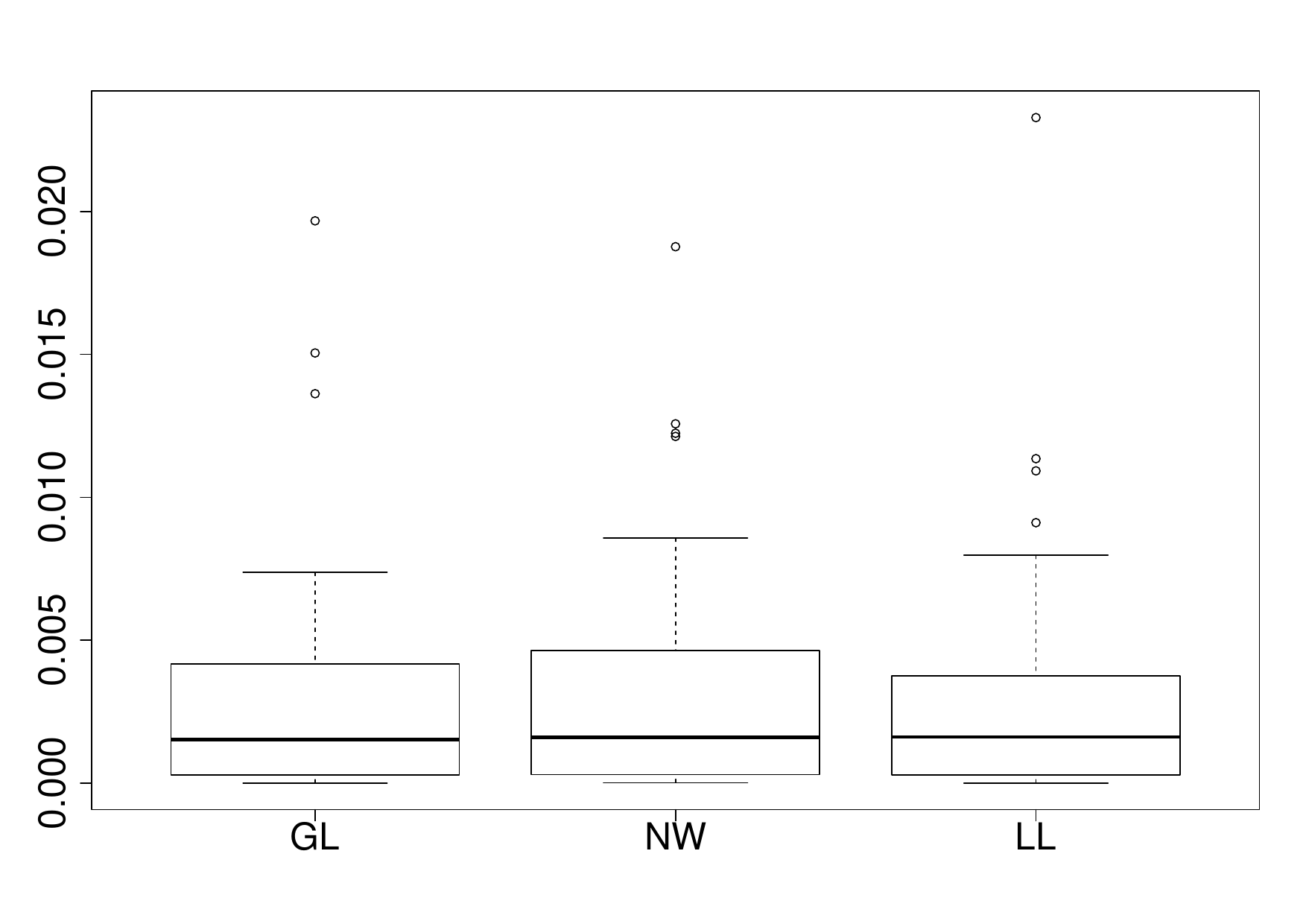} 
		\vspace{-0.8cm}
		\subcaption{\scriptsize{estimation at $x = 1.25$}}
	\end{minipage}
\vspace{-0.2cm}
	\caption{Model M1. Boxplots of the estimated risk with $50$ runs for the GL, NW and LL methodologies (from left-hand side to right-hand side on each plot (a) and (b)) for $n=200$ and $X \sim \mathcal{N}(0,1.5)$ by using the Gaussian kernel.}
	\label{fig:simulation:regression.circular:GaussKernel:model1.ex4.boxplot.n200.XGauss.vsNW.vsLL}
\end{figure}
\begin{figure}[h!]
	\begin{minipage}{.42\linewidth}
		\hspace{-1cm}
		\includegraphics[scale=0.21]{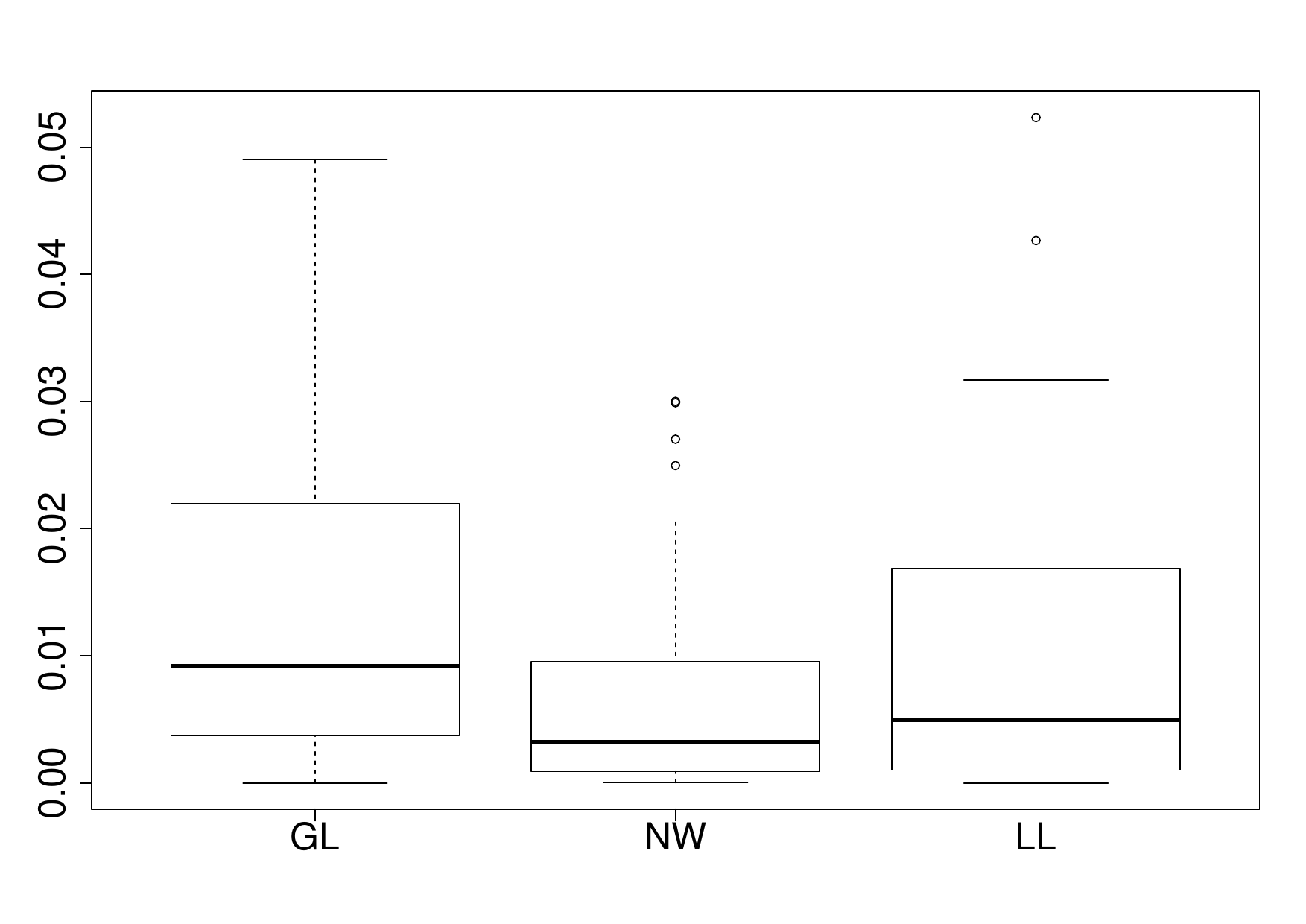}	
		\vspace{-0.8cm}
		\subcaption{\scriptsize{Model M2 with $X \sim \mathcal{N}(0,1.5)$: estimation at $x=1.05$}}
		\label{fig:simulation:GaussKernel:model1.ex5:boxplot:GL.vsNW.vsLL}
	\end{minipage}
	\hspace{0.01cm}
	\begin{minipage}{.45\linewidth}
		\includegraphics[scale=0.21]{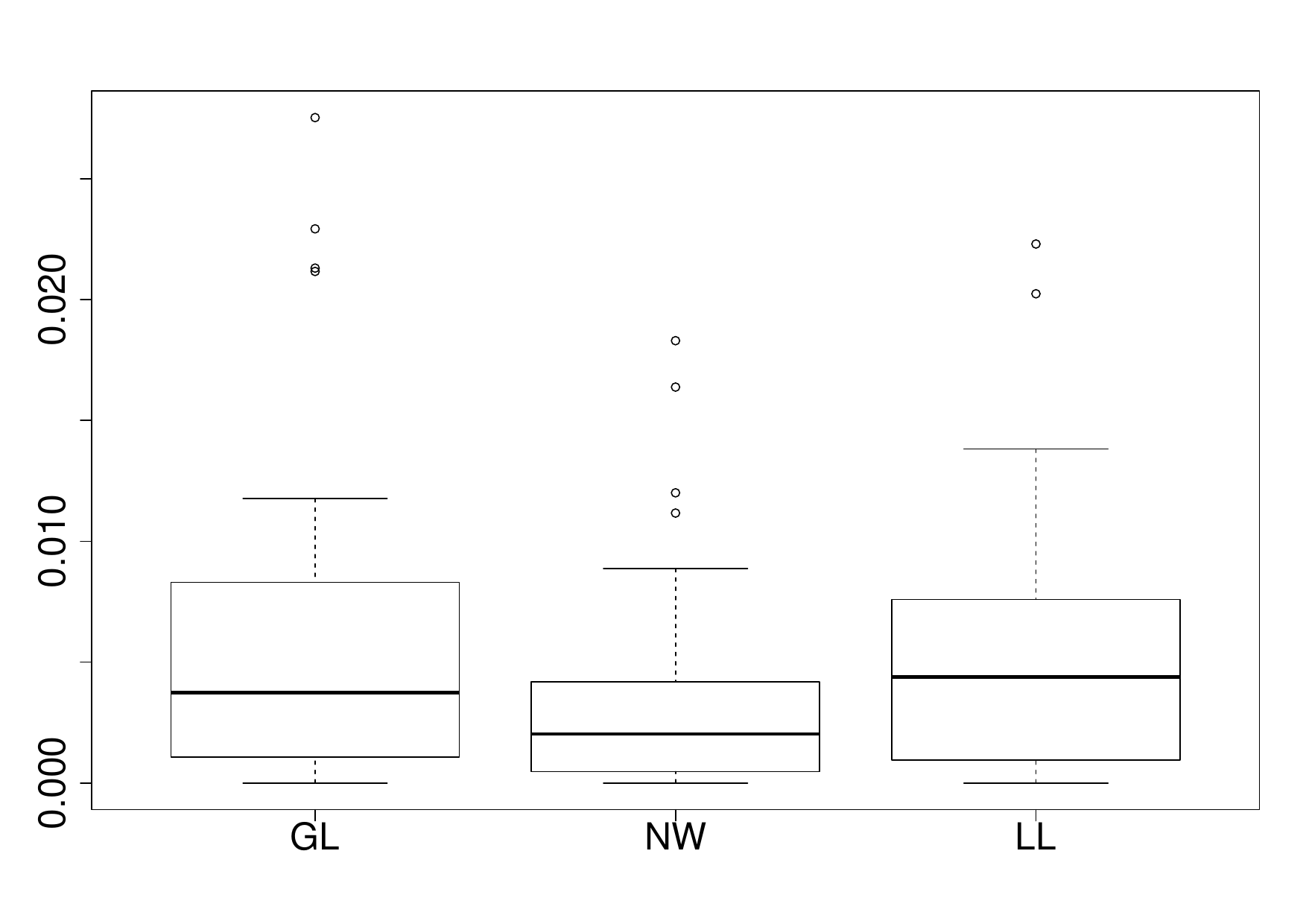} 
		\vspace{-0.8cm}
		\subcaption{\scriptsize{Model M3 with $X \sim U([0,1])$: estimation at $x=0.95$}}  
		\label{fig:simulation:GaussKernel:model2.ex1:boxplot:GL.vsNW.vsLL}
	\end{minipage}
\vspace{-0.2cm}
	\caption{Model M2 (left), Model M3 (right). Boxplots of the estimated risk with $50$ runs for the GL, NW and LL methodologies (from left-hand side to right-hand side on each plot (a) and (b)) for $n=200$  by using the Gaussian kernel.}	\label{fig:simulation:GaussKernel:model1.ex5.model2.ex1}
\end{figure}

\section{Proofs}
\label{sec:proofs}
Along this section, we fix $x$ in $\R$ and we set $u_{x} := F_X(x)$. 
\subsection{Preliminary results}
\label{sec:regression.circular:proofs:preliminaries}
In this section, we study several preliminary results for $\widehat{g}_{1,h_{1}}$ and $\widehat{g}_{2,h_{2}}$ defined in~\eqref{formula:estimate.g1-g2.fixed-h}.  First of all, via the warping method, we observe

\begin{align*} 
\mathbb{E}\big(\widehat{g}_{1,h_{1}}(u_{x})\big) &= \mathbb{E} \Bigg[ \dfrac{1}{n} \sum_{k=1}^{n} \sin(\Theta_{k}). K_{h_{1}} \big( u_{x} - F_{X}(X_{k}) \big) \Bigg]  
\\
&=  \mathbb{E} \Big[ \mathbb{E} \big[ \sin(\Theta) | X \big]. K_{h_{1}} \big( u_{x} - F_{X}(X) \big) \Big]  \nonumber
\\
&=  \int_{\R} \mathbb{E}[\sin(\Theta) | X = y ] . K_{h_{1}} \big( u_{x} - F_{X}(y) \big) . f_{X}(y) dy  \nonumber
\\
&=  \int_{\R} m_{1}(y). K_{h_{1}} \big( u_{x} - F_{X}(y) \big) . f_{X}(y) dy  \nonumber
\\
&=  \int_{F_{X}(\R)} g_{1}(w). K_{h_{1}} \big( u_{x} - w \big) dw  . 
\end{align*}
Then, using the choice of $h_{\max}$, since $u_{x} =  F_{X}(x) \in (0,1)=F_{X}(\R)$ is fixed and $K_{h_{1}} \big( u_{x} - w \big) = 0$ for $w \in (0,1)$ such that $\big| u_{x} - w \big| > A. h_{\max}$, we have 
\begin{align}\label{form:compute.expetation.g_{1,h1}}
\int_{F_{X}(\R)} g_{1}(w). K_{h_{1}} \big( u_{x} - w \big) dw   &=  \int_{u_{x} - A.h_{\max}}^{u_{x} + A.h_{\max}} g_{1}(w). K_{h_{1}} \big( u_{x} - w \big) dw    \nonumber
\\&=  \big( K_{h_{1}} \ast   g_{1}  \big) (u_{x}).
\end{align}
Thus, we obtain
\begin{equation} \label{form:compute.expetation.g_{1,h1}}
\mathbb{E}\big(\widehat{g}_{1,h_{1}}(u_{x})\big) =   \big( K_{h_{1}} \ast  g_{1}  \big) (u_{x})
\end{equation}
and similarly,
\begin{equation} \label{form:compute.expetation.g_{2,h2}}
\mathbb{E}\big(\widehat{g}_{2,h_{2}}(u_{x})\big) =   \big( K_{h_{2}} \ast g_{2}  \big) (u_{x}) .
\end{equation}
We obtain upper bounds for the bias and variance terms.
\begin{lem}\label{lem:point-wise:mean-var-cov.g_{2}}
Let $j\in\{1,2\}$. Suppose that $g_{j}$ belongs to  $\mathcal{H}(\beta_{j}, L_{j})$, with $L_{j},\beta_{j} \in \R^{*}_{+}$. Assume that the kernel $K$ satisfies Assumption~\ref{assumption:kernel.K} with an index $\mathcal{L} \in \mathbb{R}_{+}$ such that $\mathcal{L} \geq   \beta_{j}$.  Then, for any $h_j\in{\mathcal H}_n$,
	\begin{align*}
	\Big| \mathbb{E}\big(\widehat{g}_{j,h_{j}}(u_{x})\big) - g_{j}(u_{x}) \Big|  \leq   C_{K,\mathcal{L}} .L_{j}.  h_{j}^{\beta_{j}} , 
	\quad  &\textrm{ and }  \quad
	\Var\big( \widehat{g}_{j,h_{j}}(u_{x}) \big)  \leq  \dfrac{\left\|K\right\|_{\mathbb{L}^{2}(\R)}^{2}}{n. h_{j} },
	\end{align*}
	with $C_{K,\mathcal{L}}$ the constant defined in Assumption~\ref{assumption:kernel.K}.  	
\end{lem}
\noindent
The proof of Lemma~\ref{lem:point-wise:mean-var-cov.g_{2}} is given in Section~\ref{sec:lem:point-wise:mean-var-cov.g_{2}:proof}.
\leavevmode \\
We introduce in the sequel several events on which we will establish some concentration results for $\widehat{g}_{j,h_{j}}$, $j\in\{1,2\}$.
\begin{definition} \label{def:definition.Omega1n.Omega2.n}\emph{
		For $n \in \N^{*}$, $p \geq 1$, and $h_{j}> 0$, we define for an arbitrary $v \in F_{X}(\R)$ the following event
		\begin{equation*}
		\Omega_{j,n}(v,h_{j})  :=  \left\{ \big| \widehat{g}_{j,h_{j}}(v) - \E[\widehat{g}_{j,h_{j}}(v)] \big|  \leq  c_{1}(p). \sqrt{\widetilde{V}_{j}(n,h_{j})}  \right\}  
		\end{equation*}
		with $c_{1}(p)$ satisfying $c_{1}(p).\sqrt{\min\left\{ c_{0,1} ; c_{0,2} \right\}} \geq 4p$.
		\\
		Furthermore, for $L_{j},\beta_{j} > 0$, we also introduce
		\begin{equation*}
		E_{j,n}(v, h_{j})  :=  \left\{  \big| \widehat{g}_{j,h_{j}}(v) - g_{j}(v) \big| \leq \Phi_{j}(n,h_{j})  \right\} 
		\end{equation*}
		where
		\begin{equation*}
		\Phi_{j}(n,h_{j})  :=  c_{1}(p). \sqrt{ \widetilde{V}_{j}(n,h_{j}) } +  C_{K,\mathcal{L}} . L_{j}. h_{j}^{\beta_{j}}. 
		\end{equation*}
}
\end{definition}
Then, the following proposition gives a concentration inequality for $\widehat{g}_{j,h_{j}}(u_{x})$.
\begin{proposition}\label{prop:apply.Bernstein.inequa}
For $p \geq 1$ and $h_{j}\in \mathcal{H}_{n}$ (defined in~\eqref{def:bandwidth.colletion.H_n}), we have:
	\begin{equation*}
	\mathbb{P} \left( \Big( \Omega_{j,n}(u_{x}, h_{j}) \Big)^{c} \right) 	\leq 2. n^{-p}.
	\end{equation*}
	Consequently, suppose further that $g_{j}$ belongs to $\mathcal{H}(\beta_{j}, L_{j})$ with $L_{j}, \beta_{j} \in \R^{*}_{+}$ and the kernel K satisfies  Assumption~\ref{assumption:kernel.K} with an index $\mathcal{L} \in \mathbb{R}_{+}$ such that $\mathcal{L} \geq  \beta_{j}$, then we get:
\begin{equation*}
\mathbb{P} \left( \Big( E_{j,n}(u_{x}, h_{j}) \Big)^{c} \right) 	\leq 2. n^{-p}.
\end{equation*}
\end{proposition}
\noindent  The proof of Proposition~\ref{prop:apply.Bernstein.inequa} is given in Section~\ref{sec:prop:apply.Bernstein.inequa:proof}.
\subsection{Proof of Lemma~\ref{lem:point-wise:mean-var-cov.g_{2}}}
\label{sec:lem:point-wise:mean-var-cov.g_{2}:proof}
\begin{proof}
First, for the bias of $\widehat{g}_{j,h}(u_{x})$ at $u_{x} = F_{X}(x)$, using~\eqref{form:compute.expetation.g_{1,h1}}, we can write 	\begin{align} \label{lem:point-wise:mean-var-cov.g_{2}:proof:bias:eq1}
	\mathbb{E}\big(\widehat{g}_{j,h_{j}}(u_{x})\big) - g_{j}(u_{x})  &=  \dfrac{1}{h_{j}} . \int_{u_{x} - A. h_{\max}}^{u_{x} + A. h_{\max}} K \Big( \dfrac{u_{x} - z}{h_{j}} \Big). g_{j}(z) dz  - g_{j}(u_{x})    \nonumber
	\\
	&=   \int_{-A}^{A} K ( w ). \big( g_{j}(u_{x} - h_{j}. w) - g_{j}(u_{x}) \big) dw .
	\end{align}
	Since $g_{j}$ belongs to  $\mathcal{H}(\beta_{j}, L_{j})$, using a Taylor expansion for $g_{j}$, we get for $w \in [-A,A]$, $0 \leq \tau \leq 1$,
	\begin{align*}
	g_{j}(u_{x} - h_{j}. w) = g_{j}(u_{x})  &+  g_{j}' (u_{x}). (-h_{j}).w   
	\\  
	&+  ...  + \dfrac{ ( - h_{j}. w)^{ \lfloor\beta_{j} \rfloor } }{(\lfloor\beta_{j} \rfloor) !} .  g_{j}^{(\lfloor\beta_{j} \rfloor)} (u_{x} - \tau. h_{j}. w) .
	\end{align*}
	Then, under Assumption~\ref{assumption:kernel.K} with an index $\mathcal{L} \in \mathbb{R}_{+}$ satisfying $\mathcal{L} \geq   \beta_{j} $, from~\eqref{lem:point-wise:mean-var-cov.g_{2}:proof:bias:eq1} one gets
	\begin{align*}
	&\hspace{-1cm} \int_{-A}^A K ( w ). \big( g_{j}(u_{x} - h_{j}. w) - g_{j}(u_{x}) \big) dw
	\\
	& =  \int_{-A}^A K ( w ).   \dfrac{ (- h_{j}. w)^{ \lfloor\beta_{j} \rfloor } }{(\lfloor\beta_{j} \rfloor) !} .  g_{j}^{(\lfloor\beta_{j} \rfloor)} (u_{x} - \tau. h_{j}. w)  dw
	\\
	& =  \int_{-A}^A K ( w ).   \dfrac{ (- h_{j}. w)^{\lfloor\beta_{j} \rfloor} }{(\lfloor\beta_{j} \rfloor) !} .  \Big( g_{j}^{(\lfloor\beta_{j} \rfloor)} (u_{x} - \tau. h_{j}. w)  - g_{j}^{(\lfloor\beta_{j} \rfloor)}(u_{x}) \Big) dw.
	\end{align*}
	This implies that with $0 \leq \tau \leq 1$, with $C_{K,\mathcal{L}}$ the constant defined in  Assumption~\ref{assumption:kernel.K}, since  $g_{j}  \in  \mathcal{H}(\beta_{j}, L_{j})$,
	\begin{align*}
	& \hspace{-1cm} \Big| \mathbb{E}\big(\widehat{g}_{j,h_{j}}(u_{x})\big) - g_{j}(u_{x}) \Big|  
	\\
	&\leq  \int_{-A}^A  |K(w)|.  \dfrac{ | h_{j}. w |^{ \lfloor\beta_{j} \rfloor } }{(\lfloor\beta_{j} \rfloor) !} .  \Big| g_{j}^{(\lfloor\beta_{j} \rfloor)} (u_{x} - \tau. h_{j}. w)  - g_{j}^{(\lfloor\beta_{j} \rfloor)}(u_{x}) \Big|  dw
	\\
	&\leq  \int_{-A}^A  |K(w)|.  \dfrac{ | h_{j}. w |^{ \lfloor\beta_{j} \rfloor } }{(\lfloor\beta_{j} \rfloor) !} .  L_{j}. \big| \tau. h_{j}. w \big|^{\beta_{j} - \lfloor\beta_{j} \rfloor }  dw  
	\\
	&\leq  L_{j}. h_{j}^{\beta_{j}}. \int_{-A}^A  |K(w)|. | w |^{\beta_{j}}  dw
	\\
	&\leq  L_{j}. h_{j}^{\beta_{j}}. \int_{-A}^A  |K(w)|. (1+ | w | )^{\beta_{j}}  dw   \leq  L_{j}. h_{j}^{\beta_{j}}. C_{K,\mathcal{L}}  . 	\end{align*}
For the variance of $\widehat{g}_{j,h_{j}}(u_{x})$,  one gets, with $a_1(\Theta_{k})= \sin(\Theta_{k})$  and $a_2(\Theta_{k})= \cos(\Theta_{k})$,
	\begin{align*}
	\Var\big( \widehat{g}_{j,h_{j}}(u_{x}) \big) &= \mathbb{E} \Big[ \Big( \widehat{g}_{j,h_{j}}(u_{x}) - \mathbb{E}\big(\widehat{g}_{j,h_{j}}(u_{x})\big) \Big)^{2} \Big] 
	\\
	&= \Var \left( \dfrac{1}{ n }. \sum_{k=1}^{n} a_j(\Theta_{k}). K_{h_{j}} \big( u_{x} - F_{X}(X_{k}) \big) \right)
	\\
	&\leq  \dfrac{1}{n }. \mathbb{E} \left[ \big(a_j(\Theta) \big)^{2}. \big[ K_{h_{j}} \big( u_{x} - F_{X}(X) \big) \big]^{2} \right]
	\\
	&\leq  \dfrac{1}{n }. \mathbb{E} \left( \big[ K_{h_{j}} \big( u_{x} - F_{X}(X) \big) \big]^{2} \right)  \leq  \dfrac{\left\|K\right\|_{\L^{2}(\R)}^{2}}{n. h_{j}} .
	\end{align*}
	This concludes the proof of Lemma \ref{lem:point-wise:mean-var-cov.g_{2}}.
\end{proof}
\subsection{Proof of Proposition~\ref{prop:apply.Bernstein.inequa}}
\label{sec:prop:apply.Bernstein.inequa:proof}
We shall use the following version of Bernstein inequality (see \cite[Lemma 2]{comtelacour}).
\begin{lem}[Bernstein inequality] \label{lem:Berstein-inequality.cite}
	Let $T_{1},\ldots,T_{n}$ be i.i.d. random variables and $S_{n} = \sum_{j=1}^{n} \big[ T_{j} - \E (T_{j}) \big]$. Then, for any $\eta > 0$,
	\begin{align*}
	\mathbb{P} \Big( \big| S_{n} \big| \geq n. \eta \Big)  \leq  2. \max \left( \exp \Big( -\dfrac{n. \eta^{2}}{4.V} \Big) , \exp \Big( -\dfrac{n. \eta}{4.b} \Big) \right)  ,
	\end{align*}
	with $\Var(T_{1}) \leq V$ and $|T_{1}| \leq b$, where $V$ and $b$ are two positive deterministic constants.
\end{lem}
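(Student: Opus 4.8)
The plan is to prove Lemma~\ref{lem:Berstein-inequality.cite} by the Cram\'er--Chernoff (exponential Markov) method, and then to recast the classical Bernstein exponent into the stated $\max$-form. First I would reduce to a one-sided estimate: since $-T_1,\ldots,-T_n$ satisfy exactly the same hypotheses (same variance bound $V$ and same sup-norm bound $b$), a bound for $\mathbb{P}(S_n \geq n\eta)$ immediately yields the same bound for $\mathbb{P}(-S_n \geq n\eta)$, and summing the two accounts for the overall factor $2$. Writing $Y_j := T_j - \mathbb{E}(T_j)$, the $Y_j$ are i.i.d. and centered, with $\Var(Y_1) = \Var(T_1) \leq V$ and $|Y_1| \leq 2b$, the last bound being the only place where centering costs a factor $2$. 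For every $\lambda > 0$, Markov's inequality applied to $e^{\lambda S_n}$ together with independence gives
\begin{equation*}
\mathbb{P}(S_n \geq n\eta) \leq e^{-\lambda n\eta}\,\big(\mathbb{E}[e^{\lambda Y_1}]\big)^{n}.
\end{equation*}

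The core step is to control the moment generating function of a single centered, bounded summand. Expanding the exponential, using $\mathbb{E}(Y_1)=0$ and the moment bounds $|\mathbb{E}(Y_1^k)| \leq (2b)^{k-2}\,\mathbb{E}(Y_1^2) \leq (2b)^{k-2}V$ valid for all $k\geq 2$, I would sum the series and use $1+x\leq e^x$ to obtain, for $0 < \lambda < 3/(2b)$,
\begin{equation*}
\mathbb{E}[e^{\lambda Y_1}] \leq \exp\!\left( \frac{V\lambda^{2}/2}{1 - 2b\lambda/3} \right),
\end{equation*}
where I would invoke the elementary inequality $e^{u}-1-u \leq \tfrac{u^{2}/2}{1-u/3}$ (for $0\leq u <3$) with $u=2b\lambda$. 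Raising to the power $n$ and optimising the resulting exponent $-\lambda n\eta + \tfrac{nV\lambda^2/2}{1-2b\lambda/3}$ over $\lambda$ at the standard Bernstein value $\lambda = \eta/(V+2b\eta/3)$ yields the one-sided classical bound
\begin{equation*}
\mathbb{P}(S_n \geq n\eta) \leq \exp\!\left( -\frac{n\eta^{2}/2}{V + 2b\eta/3} \right).
\end{equation*}

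It then remains to pass to the $\max$-form. Using $V + 2b\eta/3 \leq 2\max(V, 2b\eta/3)$, the exponent satisfies
\begin{equation*}
\frac{n\eta^{2}/2}{V+2b\eta/3} \geq \frac{n\eta^{2}}{4\max(V,\,2b\eta/3)} .
\end{equation*}
Splitting into the two cases $V \geq 2b\eta/3$ and $V < 2b\eta/3$, the right-hand side is bounded below by $n\eta^{2}/(4V)$ in the first case and by $3n\eta/(8b) \geq n\eta/(4b)$ in the second; in either case the one-sided probability is at most $\max\big(\exp(-n\eta^2/(4V)),\,\exp(-n\eta/(4b))\big)$. Combining with the symmetric bound for $-S_n$ supplies the factor $2$ and completes the proof.

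I expect the main obstacle to be the moment generating function estimate of the second paragraph: one must track the boundedness constant correctly through the centering step (hence $2b$ rather than $b$), justify the termwise moment bound $|\mathbb{E}(Y_1^k)|\leq (2b)^{k-2}V$, and verify that the elementary inequality together with the optimisation over $\lambda$ delivers exactly the Bernstein exponent whose denominator $V+2b\eta/3$ is small enough for the $\max$-conversion to land below the prescribed constants $4V$ and $4b$. The remaining steps are routine once this bound is in place.
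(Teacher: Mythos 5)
Your proof is correct. There is nothing in the paper to compare it with: the authors do not prove this lemma but import it verbatim from \cite[Lemma 2]{comtelacour}, so your blind attempt necessarily takes a ``different route'' in the sense that it supplies a full argument where the paper offers only a citation. Your route is the classical Cram\'er--Chernoff/Bernstein derivation, and it does land exactly on the stated constants: the moment generating function bound $\E[e^{\lambda Y_1}] \leq \exp\bigl(\tfrac{V\lambda^2/2}{1-2b\lambda/3}\bigr)$ follows from the termwise bound $|\E(Y_1^k)| \leq (2b)^{k-2}V$ together with the elementary inequality $e^u-1-u \leq \tfrac{u^2/2}{1-u/3}$ (a consequence of $k! \geq 2\cdot 3^{k-2}$), the choice $\lambda = \eta/(V+2b\eta/3)$ is admissible (it is smaller than $3/(2b)$) and yields the exponent $-\tfrac{n\eta^2/2}{V+2b\eta/3}$, and the two-case comparison with $\max(V,\, 2b\eta/3)$ gives either $\exp(-n\eta^2/(4V))$ or $\exp(-3n\eta/(8b)) \leq \exp(-n\eta/(4b))$, so each one-sided probability is below the stated maximum and symmetry supplies the factor $2$. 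The two points that genuinely needed care --- and which you handled correctly --- are that the hypothesis bounds $|T_1|$ rather than the centered variable, forcing $|Y_1|\leq 2b$, and that this doubling is absorbed by the slack between $3/8$ and $1/4$ in the second case, so the constants $4V$ and $4b$ required later in the proof of Proposition~\ref{prop:apply.Bernstein.inequa} survive intact. The only microscopic omission is the unremarked interchange of expectation and infinite sum in the MGF expansion, which is immediate here because $Y_1$ is bounded.
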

Now, we can start to prove Proposition~\ref{prop:apply.Bernstein.inequa}.
\begin{proof}[Proof of Proposition~\ref{prop:apply.Bernstein.inequa}]
	We follow the procedure proposed in \cite[Proposition $6$]{comtelacour}.
	First of all, we define random variables $Z_{k}(u_{x}) := a_j(\Theta_{k}). K_{h_{j}}\big( u_{x} - F_{X}(X_{k}) \big)$, for $1 \leq k \leq n$,  with $a_1(\Theta_{k})= \sin(\Theta_{k})$ and $a_2(\Theta_{k})= \cos(\Theta_{k})$. Hence, $\widehat{g}_{j,h_{j}}(u_{x}) = \dfrac{1}{n}\sum_{k=1}^{n} Z_{k}( u_{x} )$.
	Notice that $\mathbb{E}(Z_{k}(u_{x})) =  \big( K_{h_j} * g_{j} \big)  ( u_{x} )  $ (see \eqref{form:compute.expetation.g_{1,h1}} and \eqref{form:compute.expetation.g_{2,h2}}).
	Since $\left\|K\right\|_{\infty} < +\infty$, we then have for any $v \in F_{X}(\R)$:
	\begin{equation}  \label{formula:b(h)}
	\left| Z_{k}(v) \right| = \left| a_j(\Theta_{k}). K_{h_{j}}\big( v - F_{X}(X_{k}) \big) \right|   \leq  \dfrac{\left\|K\right\|_{\infty}}{h_{j}} =:  b(h_{j}),
	\end{equation}
	and $\Var \big( Z_{k}(v) \big)  =  n. \Var\big(\widehat{g}_{j,h_{j}}(v) \big)  \leq  n. \dfrac{\left\|K\right\|_{\L^{2}(\R)}^{2}}{n. h_{j}} =: n.V_{0}(n,h_{j}) $.
	Now, applying  Lemma~\ref{lem:Berstein-inequality.cite} to the  $Z_{k}(v)$'s, we obtain for $\eta(h_{j})>0$,
	\begin{align*}
	&\hspace{-1.5cm} \mathbb{P} \left( \Big| \widehat{g}_{j,h_{j}}(v) - \mathbb{E} \big( \widehat{g}_{j,h_{j}}(v) \big) \Big| \geq \eta(h_{j}) \right) 
	\\
	&=
	\mathbb{P} \left( \Big| \sum_{k=1}^{n} Z_{k}(v) - \mathbb{E}\Big(Z_{k}(v)\Big) \Big|  \geq  n. \eta(h_{j}) \right)
	\\
	&\leq  2. \max \left\{ \exp \left( -\dfrac{n.\big(\eta(h_{j}) \big)^2}{4 n.V_{0}(n,h_{j})} \right)  ;  \exp \left( -\dfrac{n.\eta(h_{j})}{4 b(h_{j})} \right)  \right\}   .
	\end{align*}
	For $p \geq 1$, choose $\eta(h_{j}) = c_{1}(p). \sqrt{\widetilde{V}_{j}(n,h_{j})}$, with 
	\begin{equation}\label{def:tildeV}
	\widetilde{V}_{j}(n,h_{j}) = c_{0,j}.\log(n). V_{0}(n,h_{j}).
	\end{equation}
	Then,
	\begin{align}\label{bernstein.inqua:byBerstein}
	&\hspace{-0.5cm}\mathbb{P} \left( \Big| \widehat{g}_{j,h_{j}}(v) - \mathbb{E} \big( \widehat{g}_{j,h_{j}}(v) \big) \Big| \geq c_{1}(p). \sqrt{\widetilde{V}_{j}(n,h_{j})} \right) \nonumber
	\\
	&\leq  2. \max \left\{ \exp \left( -\dfrac{n.c_{1}(p)^{2}. \widetilde{V}_{j}(n,h_{j}) }{4 n.V_{0}(n,h_{j})} \right)  ;  \exp \left( -\dfrac{n.c_{1}(p). \sqrt{\widetilde{V}_{j}(n,h_{j})}}{4 b(h_{j})} \right)  \right\}  .
	\end{align}
	We then choose $c_{1}(p)$.
	\\
	$+$ First, $c_{1}(p)$ is chosen such that
	\begin{equation} \label{bernstein.inqua:maximize.1}
	\dfrac{n.c_{1}(p)^{2}. \widetilde{V}_{j}(n,h_{j}) }{4 n.V_{0}(n,h_{j})}  =  \dfrac{n.c_{1}(p)^{2}. c_{0,j}.\log(n).V_{0}(n,h_{j}) }{4 n.V_{0}(n,h_{j})} \geq  p. \log(n) ,
	\end{equation}
	that is $c_{1}(p)$ satisfies $c_{1}(p)^{2}. c_{0,j} \geq 4p$.
	\\
	$+$ Secondly, we can write
	\begin{equation*}
	\dfrac{n.c_{1}(p). \sqrt{\widetilde{V}_{j}(n,h_{j})}}{4 b(h_{j})} = \dfrac{c_{1}(p).\sqrt{c_{0,j}} }{4}. \sqrt{\log(n)}. \dfrac{n.\sqrt{V_{0}(n,h_{j})}}{b(h_{j})}
	\end{equation*}
	and for $h_{j} \in \mathcal{H}_{n}$,
	\begin{equation*} 
	n. \dfrac{\sqrt{V_{0}(n,h_{j})}}{b(h_{j})} = n. \dfrac{\left\|K\right\|_{\L^{2}(\R)}}{\sqrt{n. h_{j}}}. \dfrac{h_{j}}{\left\|K\right\|_{\infty}} = \sqrt{n.h_{j}}. \dfrac{\left\|K\right\|_{\L^{2}(\R)}}{\left\|K\right\|_{\infty}}  >  \sqrt{\log(n)}   ,
	\end{equation*}
	then, we have
	\begin{align}
	\label{bernstein.inqua:maximize.2}
	\dfrac{n.c_{1}(p). \sqrt{\widetilde{V}_{j}(n,h_{j})}}{4 b(h_{j})} = \dfrac{c_{1}(p).\sqrt{c_{0,j}} }{4}. \sqrt{\log(n)}. \dfrac{n.\sqrt{V_{0}(n,h_{j})}}{b(h_{j})}  &\geq  \dfrac{c_{1}(p).\sqrt{c_{0,j}} }{4}. \log(n) 
	\\ 
	&\geq  p. \log(n) \nonumber ,
	\end{align}
	provided that  $c_{1}(p).\sqrt{c_{0,j}} \geq 4p$. Note that this condition also ensures the constraint $c_{1}(p)^{2}. c_{0,j} \geq 4p$.

	Now, combining \eqref{bernstein.inqua:maximize.1} and \eqref{bernstein.inqua:maximize.2}, we get from \eqref{bernstein.inqua:byBerstein} for any $p \geq 1$:
	\begin{align*}
	\mathbb{P} \left(\Big( \Omega_{j,n}(u_{x}, h_{j}) \Big)^{c} \right)  &=  \mathbb{P} \left( \Big| \widehat{g}_{j,h_{j}}(u_{x}) - \mathbb{E} \big( \widehat{g}_{j,h_{j}}(u_{x}) \big) \Big| > c_{1}(p). \sqrt{\widetilde{V}_{j}(n,h_{j})} \right)  
	\\
	&\leq 2. n^{-p}.
	\end{align*}
	This implies that with probability larger than $1 - 2.n^{-p}$, we have:
	\begin{align*}
	\big| \widehat{g}_{j,h_{j}}(u_{x}) - \mathbb{E} \big( \widehat{g}_{j,h_{j}}(u_{x}) \big) \big|  \leq  c_{1}(p). \sqrt{\widetilde{V}_{j}(n,h_{j})}.
	\end{align*}
	Then, with probability larger than $1 - 2.n^{-p}$, we obtain
	\begin{align*}
	\big| \widehat{g}_{j,h_{j}}(u_{x}) - g_{j}(u_{x})  \big|  &\leq  \big| \widehat{g}_{j,h_{j}}(u_{x}) - \mathbb{E} \big( \widehat{g}_{j,h_{j}}(u_{x}) \big) \big| + \big| \mathbb{E} \big( \widehat{g}_{j,h_{j}}(u_{x}) \big) - g_{j}(u_{x})  \big|
	\\
	&\leq  c_{1}(p). \sqrt{\widetilde{V}_{j}(n,h_{j})} \quad + \quad  L_{j}. h_{j}^{\beta_{j}} . C_{K,\mathcal{L}} .  
		\end{align*}
	Recall that $\Phi_{j}(n,h_{j}) =  c_{1}(p). \sqrt{\widetilde{V}_{j}(n,h_{j})}  +  L_{j}. h_{j}^{\beta_{j}} . C_{K,\mathcal{L}}  
		$, therefore, we finally obtain for $p \geq 1$ that
	\begin{equation}
	\mathbb{P} \left(\Big( E_{j,n}( u_{x} , h_{j} ) \Big)^{c} \right) =
	\mathbb{P} \left( \big| \widehat{g}_{j,h_{j}}(u_{x}) - g_{j}(u_{x}) \big| > \Phi_{j}(n,h_{j}) \right)  \leq 2. n^{-p}  .
	\end{equation}
 This concludes the proof of Proposition~\ref{prop:apply.Bernstein.inequa}.
	\end{proof}
\subsection{Proofs of main results}
\label{sec:regression.circular:proofs:proofs.main.results}
\subsubsection{Proof of Proposition~\ref{adaptive:prop:upper-bound.high-Proba:g2-g1} }
\label{sec:adaptive:prop:upper-bound.high-Proba:g2-g1:proof}
We first have the following concentration result.
\begin{corollary}
	\label{corollary:prop:apply.Bernstein.inequa:study.adaptation}
Let $j\in\{1,2\}$.	Under the Assumptions of Proposition~\ref{adaptive:prop:upper-bound.high-Proba:g2-g1}, for all $h_{j}, h_{j}' \in \mathcal{H}_{n}$, for all $p \geq 1$, for $u_{x} = F_{X}(x)$,
	\begin{equation*}
	\mathbb{P} \left( \big| \widehat{g}_{j, h_{j} ,h_{j}'}(u_{x}) - \E \big[ \widehat{g}_{j, h_{j} ,h_{j}'}(u_{x}) \big] \big|  >  c_{1}(p). \left\|K\right\|_{\L^1(\R)}.  \sqrt{\widetilde{V}_{j}(n,h_{j}')} \right)  \leq  2. n^{-p},
	\end{equation*}
	with $c_{1}(p)$ satisfying $c_{1}(p).\sqrt{\min\left\{ c_{0,1} ; c_{0,2} \right\}} \geq 4p$.
\end{corollary}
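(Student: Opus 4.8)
The plan is to observe that, exactly like $\widehat g_{j,h_j}(u_x)$ itself, the twice-smoothed quantity $\widehat g_{j,h_j,h_j'}(u_x)$ is an empirical mean of i.i.d.\ bounded random variables, so that the Bernstein inequality of Lemma~\ref{lem:Berstein-inequality.cite} applies in the very same way as in the proof of Proposition~\ref{prop:apply.Bernstein.inequa}. Treating $j=1$ (the case $j=2$ is obtained verbatim by replacing $\sin(\Theta_i)$ with $\cos(\Theta_i)$ and $c_{0,1}$ with $c_{0,2}$), I would first interchange the convolution integral with the finite sum defining $\widehat g_{1,h_1}$, and perform the change of variable inside the integral, to get
\[
\widehat g_{1,h_1,h_1'}(u_x)=\big(K_{h_1'}*\widehat g_{1,h_1}\big)(u_x)=\frac1n\sum_{i=1}^n \sin(\Theta_i)\,\Psi\big(u_x-F_X(X_i)\big),\qquad \Psi:=K_{h_1'}*K_{h_1}.
\]
Setting $\widetilde Z_i:=\sin(\Theta_i)\,\Psi\big(u_x-F_X(X_i)\big)$, the summands are i.i.d.\ and $\widehat g_{1,h_1,h_1'}(u_x)=n^{-1}\sum_i\widetilde Z_i$. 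This is exactly the representation used for $\widehat g_{1,h_1}$, with the single kernel $K_{h_1}$ replaced by the convolved kernel $\Psi$.

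The key point is then to control the supremum and the variance of a summand and to check that \emph{both} scale with $1/h_1'$. For the supremum I would invoke Young's convolution inequality with the split $K_{h_1}\in\L^1$, $K_{h_1'}\in\L^\infty$, giving
\[
|\widetilde Z_i|\le\|\Psi\|_\infty=\|K_{h_1'}*K_{h_1}\|_\infty\le\|K_{h_1}\|_{\L^1(\R)}\,\|K_{h_1'}\|_\infty=\|K\|_{\L^1(\R)}\,\frac{\|K\|_\infty}{h_1'}=\|K\|_{\L^1(\R)}\,b(h_1').
\]
For the variance, since $\sin^2\le1$ and $F_X(X)$ is uniform on $(0,1)$ (here I use that $F_X$ is continuous and invertible), the same change of variable as in the variance computation of Lemma~\ref{lem:point-wise:mean-var-cov.g_{2}}, now followed by Young's inequality with the $\L^1$--$\L^2$ split, yields
\begin{align*}
\Var(\widetilde Z_1)&\le\E\big[\Psi(u_x-F_X(X))^2\big]=\int_0^1\Psi(u_x-w)^2\,dw\le\|\Psi\|_{\L^2(\R)}^2\\
&\le\|K_{h_1}\|_{\L^1(\R)}^2\,\|K_{h_1'}\|_{\L^2(\R)}^2=\|K\|_{\L^1(\R)}^2\,n\,V_0(n,h_1'),
\end{align*}
using the scaling $\|K_{h_1'}\|_{\L^2(\R)}^2=\|K\|_{\L^2(\R)}^2/h_1'$ and the notation $V_0(n,h_1')=\|K\|_{\L^2(\R)}^2/(n h_1')$ from the proof of Proposition~\ref{prop:apply.Bernstein.inequa}.

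Finally I would feed $V:=\|K\|_{\L^1(\R)}^2\,n\,V_0(n,h_1')$ and $b:=\|K\|_{\L^1(\R)}\,b(h_1')$ into Lemma~\ref{lem:Berstein-inequality.cite} with deviation level $\eta:=c_1(p)\,\|K\|_{\L^1(\R)}\sqrt{\widetilde V_1(n,h_1')}$. The whole purpose of the factor $\|K\|_{\L^1(\R)}$ in $\eta$ is that it cancels precisely the $\|K\|_{\L^1(\R)}$-inflation of $V$ and $b$: the first Bernstein exponent reduces to $n\eta^2/(4V)=c_1(p)^2c_{0,1}\log(n)/4$ and the second to $n\eta/(4b)=n\,c_1(p)\sqrt{\widetilde V_1(n,h_1')}/(4\,b(h_1'))$, which are \emph{identical} to the two exponents already handled in the proof of Proposition~\ref{prop:apply.Bernstein.inequa}. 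Hence both are $\ge p\log(n)$ under $c_1(p)\sqrt{\min\{c_{0,1};c_{0,2}\}}\ge 4p$: the first because $c_1(p)^2c_{0,1}\ge16p^2\ge4p$, and the second after invoking the defining constraint $n h_1'>(\|K\|_{\L^2(\R)}^2/\|K\|_\infty^2)\log(n)$ of the grid $\mathcal H_n$ exactly as before. Bernstein then gives the announced bound $2n^{-p}$.

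The only genuinely new ingredient compared with Proposition~\ref{prop:apply.Bernstein.inequa} is the pair of Young-inequality estimates on $\Psi=K_{h_1'}*K_{h_1}$, and the main thing to get right is the \emph{choice of split} in each application: using $K_{h_1}\in\L^1$ together with $K_{h_1'}\in\L^\infty$ (resp.\ $\L^2$) is what forces both $\|\Psi\|_\infty$ and $\|\Psi\|_{\L^2(\R)}$ to be controlled by $1/h_1'$ rather than $1/h_1$. This is precisely what makes the right-hand side depend on $\widetilde V_1(n,h_1')$ and produces the clean cancellation reducing the problem to the already-established computation.
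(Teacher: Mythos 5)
Your proof is correct and follows essentially the same route as the paper: write $\widehat g_{j,h_j,h_j'}(u_x)$ as an empirical mean of i.i.d.\ variables built from the convolved kernel $K_{h_j'}*K_{h_j}$, control the supremum and variance via Young's inequality, and rerun the Bernstein argument of Proposition~\ref{prop:apply.Bernstein.inequa}. Your explicit choice of split ($K_{h_1}\in\L^1$ against $K_{h_1'}\in\L^\infty$, resp.\ $\L^2$) is in fact the one that directly produces the $\widetilde V_j(n,h_j')$-dependence stated in the corollary, whereas the paper's own computation splits the other way and lands on $\widetilde V_2(n,h_2)$, relying implicitly on the symmetry $\widehat g_{j,h_j,h_j'}=\widehat g_{j,h_j',h_j}$ to recover the stated form.
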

\begin{proof}[Proof of Corollary~\ref{corollary:prop:apply.Bernstein.inequa:study.adaptation}]
We define random variables $\widetilde{Z}_{k}(u_{x}) := a_j(\Theta_{k}).  \big( K_{h_{j}'} \ast K_{h_{j}}  \big)  \big( u_{x} - F_{X}(X_{k}) \big)$, for $1 \leq k \leq n$,  with $a_1(\Theta_{k})= \sin(\Theta_{k})$ and $a_2(\Theta_{k})= \cos(\Theta_{k})$. Hence, $\widehat{g}_{j,h_{j}, h_{j}'}(u_{x}) = \dfrac{1}{n}\sum_{k=1}^{n} \widetilde{Z}_{k}( u_{x} )$.
	Since $\left\|K\right\|_{\infty} < +\infty$, we then have: 	\begin{equation*} 
	\left| \widetilde{Z}_{k}(u_{x}) \right| = \left| a_j (\Theta_{k}). \big( K_{h_{j}'} \ast K_{h_{j}}  \big) \big( u_{x} - F_{X}(X_{k}) \big) \right|   \leq  \dfrac{\left\|K\right\|_{\infty} . \left\|K\right\|_{\L^{1}(\R)} }{h_{j}} =:  \widetilde{b} (h_{j}),
	\end{equation*}
	and $\Var \big( \widetilde{Z}_{k}(u_{x}) \big)  =  n. \Var\big(\widehat{g}_{j,h_{j},h_{j}'}(u_{x}) \big)  \leq  n. \dfrac{\left\|K\right\|_{\L^{1}(\R)}^{2} . \left\|K\right\|_{\L^{2}(\R)}^{2}}{n. h_{j}} =: n.\widetilde{V}_{0}(n,h_{j}) $.\\
Using similar arguments of the proof of Proposition~\ref{prop:apply.Bernstein.inequa}, we obtain with a probability greater than $1 - 2.n^{-p}$ that
	\begin{align*}
	\big| \widehat{g}_{j, h_{j} ,h_{j}'}(u_{x}) - \E \big[ \widehat{g}_{j, h_{j} ,h_{j}'}(u_{x}) \big] \big|  \leq  \left\|K\right\|_{\L^{1}(\R)}.  c_{1}(p).  \sqrt{\widetilde{V}_{j}(n,h_{j})} .
	\end{align*}
This concludes the proof of Corollary~\ref{corollary:prop:apply.Bernstein.inequa:study.adaptation}.
\end{proof}
Now, we can start to prove Proposition~\ref{adaptive:prop:upper-bound.high-Proba:g2-g1}.
\begin{proof}[Proof of Proposition~\ref{adaptive:prop:upper-bound.high-Proba:g2-g1}]
We follow the strategy proposed in \cite[Theorem 2]{comtelacour}. The target is to find an upper bound for $\big| \widehat{g}_{j,\widehat{h}_{j}}(u_{x})  -  g_{j}(u_{x}) \big|$. Let $h_{j} \in \mathcal{H}_{n}$ be fixed. We consider the following decomposition:
	\begin{align*}
	\big| \widehat{g}_{j,\widehat{h}_{j}}(u_{x})  -  g_{j}(u_{x}) \big|   \leq   \underbrace{ \big| \widehat{g}_{j,\widehat{h}_{j}}(u_{x})  -  \widehat{g}_{j, h_{j} ,\widehat{h}_{j}}(u_{x}) \big| }_{=: I_{g_{j},1}}   &+   \underbrace{\big| \widehat{g}_{j, h_{j} ,\widehat{h}_{j}}(u_{x})  -  \widehat{g}_{j, h_{j}}(u_{x}) \big|}_{=: I_{g_{j},2}}   
	\\
	&+   \big|  \widehat{g}_{j, h_{j}}(u_{x}) - g_{j}(u_{x}) \big| .
	\end{align*}
	By the definition of $A_{2}(h_{j},u_{x})$, we have
	\begin{align*}
	I_{g_{j},1}  &=  \big| \widehat{g}_{j,\widehat{h}_{j}}(u_{x})  -  \widehat{g}_{j, h_{j} ,\widehat{h}_{j}}(u_{x}) \big|  
	\\
	&=  \big| \widehat{g}_{j,\widehat{h}_{j} }(u_{x})  -  \widehat{g}_{j, h_{j} ,\widehat{h}_{j}}(u_{x}) \big|  - \sqrt{\widetilde{V}_{j}(n,\widehat{h}_{j})}  +   \sqrt{\widetilde{V}_{j}(n,\widehat{h}_{j})}
	\\
	&\leq  \sup_{h_{j}' \in \mathcal{H}_{n}} \Big\{  \big| \widehat{g}_{j,h_{j}'}(u_{x})  -  \widehat{g}_{j, h_{j} ,h_{j}'}(u_{x}) \big|  -  \sqrt{\widetilde{V}_{j}(n,h_{j}')}  \Big\}_{+}  +   \sqrt{\widetilde{V}_{j}(n,\widehat{h}_{j})}
	\\
	&=  A_{2}(h_{j},u_{x})  +    \sqrt{\widetilde{V}_{j}(n,\widehat{h}_{j})}.
	\end{align*}
	And similarly, by the definition of $A_{2}(\widehat{h}_{j},u_{x})$,
	\begin{align*}
	I_{g_{j},2}  &=  \big| \widehat{g}_{j, h_{j} ,\widehat{h}_{j}}(u_{x})  -  \widehat{g}_{j, h_{j}}(u_{x}) \big|  
	\\
	&\leq  \sup_{h_{j}' \in \mathcal{H}_{n}} \Big\{  \big| \widehat{g}_{j, h_{j}' ,\widehat{h}_{j}}(u_{x})  -  \widehat{g}_{j, h_{j}'}(u_{x}) \big|  -  \sqrt{\widetilde{V}_{j}(n,h_{j}')}  \Big\}_{+}  +   \sqrt{\widetilde{V}_{j}(n,h_{j})}
	\\
	&=  A_{2}(\widehat{h}_{j},u_{x})  +    \sqrt{\widetilde{V}_{j}(n,h_{j})}.
	\end{align*}
	Therefore, by using the definition of $\widehat{h}_{j}$, we get
	\begin{align*}
	I_{g_{j},1}  +  I_{g_{j},2}  &\leq  A_{2}(h_{j},u_{x})  +    \sqrt{\widetilde{V}_{j}(n,\widehat{h}_{j})}  +  A_{2}(\widehat{h}_{j},u_{x})  +   \sqrt{\widetilde{V}_{j}(n,h_{j})}  
	\\
	&\leq  2. \Big[ A_{2}(h_{j},u_{x}) +   \sqrt{\widetilde{V}_{j}(n,h_{j})}   \Big]  .
	\end{align*}
	Hence, we obtain
	\begin{equation} \label{adaptive:thm:upper-bound.high-Proba:g2:proof:inequality1}
	\big| \widehat{g}_{j,\widehat{h}_{j}}(u_{x})  -  g_{j}(u_{x}) \big|   \leq   2.  A_{2}(h_{j},u_{x})  +   2. \sqrt{\widetilde{V}_{j}(n,h_{j})}  +   \big|  \widehat{g}_{j, h_{j}}(u_{x}) - g_{j}(u_{x}) \big| .
	\end{equation}
	Now, to study $A_{2}(h_{j},u_{x})$, we can write:
	\begin{align*}
	\widehat{g}_{j,h_{j}'}(u_{x})  -  \widehat{g}_{j, h_{j} ,h_{j}'}(u_{x})   &=   \widehat{g}_{j,h_{j}'}(u_{x})  -  \E \big[ \widehat{g}_{j,h_{j}'}(u_{x}) \big]  - \big( \widehat{g}_{j, h_{j} ,h_{j}'}(u_{x}) 
	\\
	&\hspace{0.5cm} - \E \big[ \widehat{g}_{j, h_{j} ,h_{j}'}(u_{x}) \big] \big)   +   \E \big[ \widehat{g}_{j,h_{j}'}(u_{x}) \big]  -  \E \big[ \widehat{g}_{j, h_{j} ,h_{j}'}(u_{x}) \big] ,
	\end{align*}
	and, we have $\E \big[ \widehat{g}_{j,h_{j}'}(u_{x}) \big] = \big( K_{h_{j}'}*g_{j}  \big)(u_{x})$ as well as $\E \big[ \widehat{g}_{j, h_{j} ,h_{j}'}(u_{x}) \big] = \E \big[ K_{h_{j}'} *  \widehat{g}_{j,h_{j}}(u_{x}) \big] = \big( K_{h_{j}'}*K_{h_{j}}*g_{j} \big)(u_{x})$.
	\\
	Thus,
	\begin{align*}
	\big| \widehat{g}_{j,h_{j}'}(u_{x})  -  \widehat{g}_{j, h_{j} ,h_{j}'}(u_{x}) \big| -   \sqrt{\widetilde{V}_{j}(n,h_{j}')}  &\leq  \big| \widehat{g}_{j,h_{j}'}(u_{x})  -  \E \big[ \widehat{g}_{j,h_{j}'}(u_{x}) \big]  \big|  
	\\
	&\hspace{0.2cm}-   \dfrac{\sqrt{\widetilde{V}_{j}(n,h_{j}')}}{ (1 + \left\|K\right\|_{\L^1(\R)} ) }
	\\
	& \hspace{0.2cm}  +  \big| \widehat{g}_{j, h_{j} ,h_{j}'}(u_{x}) - \E \big[ \widehat{g}_{j, h_{j} ,h_{j}'}(u_{x}) \big] \big|  
	\\
	& \hspace{0.2cm} -  \left\|K\right\|_{\L^1(\R)}.  \dfrac{\sqrt{\widetilde{V}_{j}(n,h_{j}')}}{ (1 + \left\|K\right\|_{\L^1(\R)} ) }
	\\
	& \hspace{0.2cm} +  \big| \E \big[ \widehat{g}_{j,h_{j}'}(u_{x}) \big]  -  \E \big[ \widehat{g}_{j, h_{j} ,h_{j}'}(u_{x}) \big] \big|.
	\end{align*}
	However, for any $h_{j}' \in \mathcal{H}_{n}$,
	\begin{align}\label{uniformbias}
	\big| \E \big[ \widehat{g}_{j,h_{j}'}(u_{x}) \big]  -  \E \big[ \widehat{g}_{j, h_{j} ,h_{j}'}(u_{x}) \big] \big|  &=  \big| K_{h_{j}'}* \big( g_{j} - K_{h_{j}}*g_{j} \big)(u_{x}) \big|\nonumber
	\\  
	&\leq  \left\| K \right\|_{\L^{1}(\R)} . \left\|g_{j} - K_{h_{j}}*g_{j}\right\|_{\infty}.
	\end{align}
	Hence, 
	incorporating this bound in the definition of $A_{2}(h_{j},u_{x})$, we obtain
	\begin{align}  \label{adaptive:thm:upper-bound.high-Proba:g2:proof:inequality2}
	&\hspace{-0.5cm} A_{2}(h_{j},u_{x}) 
	\\
	&= \sup_{h_{j}' \in \mathcal{H}_{n}}  \Big\{ \big| \widehat{g}_{j,h_{j}'}(u_{x})  -  \widehat{g}_{j, h_{j} ,h_{j}'}(u_{x}) \big| -   \sqrt{\widetilde{V}_{j}(n,h_{j}')} \Big\}_{+}   \nonumber
	\\
	&\leq  \sup_{h_{j}' \in \mathcal{H}_{n}}  \left\{\big| \widehat{g}_{j,h_{j}'}(u_{x})  -  \E \big[ \widehat{g}_{j,h_{j}'}(u_{x}) \big]  \big|  -   \dfrac{\sqrt{\widetilde{V}_{j}(n,h_{j}')}}{ (1 + \left\|K\right\|_{\L^1(\R)} ) }  \right\}_{+}
	\\
	&\hspace{0.4cm}  +  \sup_{h_{j}' \in \mathcal{H}_{n}}  \left\{\big| \widehat{g}_{j, h_{j} ,h_{j}'}(u_{x}) - \E \big[ \widehat{g}_{j, h_{j} ,h_{j}'}(u_{x}) \big] \big|  -   \left\|K\right\|_{\L^1(\R)}.  \dfrac{\sqrt{\widetilde{V}_{j}(n,h_{j}')}}{ (1 + \left\|K\right\|_{\L^1(\R)} ) }  \right\}_{+}
	\nonumber
	\\
	& \hspace{6cm} +   \left\|K\right\|_{\L^1(\R)}. \left\|g_{j} - K_{h_{j}}*g_{j}\right\|_{\infty}.  \nonumber
	\end{align}
	From Corollary~\ref{corollary:prop:apply.Bernstein.inequa:study.adaptation}, for $h_{j}, h_{j}' \in \mathcal{H}_{n}$,
	\begin{align*}
	\mathbb{P} \left( \big| \widehat{g}_{j, h_{j} ,h_{j}'}(u_{x}) - \E \big[ \widehat{g}_{j, h_{j} ,h_{j}'}(u_{x}) \big] \big|  >  c_{1}(p). \left\|K\right\|_{\L^1(\R)}. \sqrt{\widetilde{V}_{j}(n,h_{j}')} \right)  \leq  2. n^{-p}.
	\end{align*}
	It implies that if we take $c_{1}(p) = \dfrac{1}{1 + \left\|K\right\|_{\L^1(\R)}}$ and if $c_{0,j} \geq 16 p^{2}. \big( 1 + \left\|K\right\|_{\L^1(\R)} \big)^{2}$, then
	\begin{align*}
	\mathbb{P} \left(  \sup_{h_{j}' \in \mathcal{H}_{n}}  \left\{ \big| \widehat{g}_{j,h_{j}'}(u_{x})  -  \E \big[ \widehat{g}_{j,h_{j}'}(u_{x}) \big]  \big|  -   \dfrac{\sqrt{\widetilde{V}_{j}(n,h_{j}')}}{ (1 + \left\|K\right\|_{\L^1(\R)} ) }  \right\}_{+}  >  0  \right)  &\leq  2. \sum_{h_{j} \in \mathcal{H}_{n}} n^{-p}  
	\\
	&\leq   2. n^{1 - p}  ,
	\end{align*}
	as $\card(\mathcal{H}_{n})  \leq  n$.
		Consequently, the following set
	\begin{align*}
	\widetilde{A}_{2}  
	&:=  \Big\{  \sup_{h_{j}' \in \mathcal{H}_{n}}  \Big\{ \big| \widehat{g}_{j,h_{j}'}(u_{x})  -  \E \big[ \widehat{g}_{j,h_{j}'}(u_{x}) \big]  \big|  -    \dfrac{\sqrt{\widetilde{V}_{j}(n,h_{j}')}}{ (1 + \left\|K\right\|_{\L^1(\R)} ) }  \Big\}_{+}  = 0  \Big\}
	\\
	&\hspace{0.5cm}  \cap  \Big\{ \forall \, h_j \in \mathcal{H}_n,  \sup_{h_{j}' \in \mathcal{H}_{n}}  \Big\{ \big| \widehat{g}_{j, h_{j} ,h_{j}'}(u_{x}) - \E \big[ \widehat{g}_{j, h_{j} ,h_{j}'}(u_{x}) \big] \big|  
	\\
	&\hspace{5cm} -   \left\|K\right\|_{\L^1(\R)}.  \dfrac{\sqrt{\widetilde{V}_{j}(n,h_{j}')}}{ (1 + \left\|K\right\|_{\L^1(\R)} ) }  \Big\}_{+} = 0  \Big\}
	\end{align*}
	has probability greater than $(1 - 4.n^{2 - p})$. Now, choose $p = 2 + q$ and then $c_{0,j} \geq 16 \big(2 + q \big)^{2}. \big( 1 + \left\|K\right\|_{\L^1(\R)} \big)^{2}$. Thus, we obtain that $\mathbb{P} \big( \widetilde{A}_{2} \big) > 1 - 4.n^{-q} $.
	\leavevmode \\
	Combining inequalities \eqref{adaptive:thm:upper-bound.high-Proba:g2:proof:inequality1} and \eqref{adaptive:thm:upper-bound.high-Proba:g2:proof:inequality2}, we have on $\widetilde{A}_{2}$:
	\begin{align*}
	\big| \widehat{g}_{j,\widehat{h}_{j}}(u_{x})  -  g_{j}(u_{x}) \big|   &\leq   2.  A_{2}(h_{j},u_{x})  +   2.  \sqrt{\widetilde{V}_{j}(n,h_{j})}  +   \big|  \widehat{g}_{j, h_{j}}(u_{x}) - g_{j}(u_{x}) \big|
	\\
	&\leq  2. \left\|K\right\|_{\L^1(\R)}. \left\|g_{j} - K_{h_{j}}*g_{j}\right\|_{\infty}  +  2. \sqrt{\widetilde{V}_{j}(n,h_{j})}  
	\\
	& \hspace{4.8cm} +   \big|  \widehat{g}_{j, h_{j}}(u_{x}) - g_{j}(u_{x}) \big| ,
	\end{align*}
	but still on $\widetilde{A}_{2}$, one gets $\big| \widehat{g}_{j,h_{j}}(u_{x}) - \E \big[\widehat{g}_{j,h_{j}}(u_{x})\big] \big|  -   \dfrac{\sqrt{\widetilde{V}_{j}(n,h_{j})}}{ (1 + \left\|K\right\|_{\L^1(\R)}) } \leq 0$, so
	\begin{align*}
	\big| \widehat{g}_{j,h_{j}}(u_{x})  -  g_{j}(u_{x}) \big|   &\leq  \big| \E \big[\widehat{g}_{j,h_{j}}(u_{x})\big] - g_{j}(u_{x}) \big|  +  \big| \widehat{g}_{j,h_{j}}(u_{x}) - \E \big[\widehat{g}_{j,h_{j}}(u_{x})\big] \big|  
	\\ & \hspace{2.5cm}
	-  \dfrac{\sqrt{\widetilde{V}_{j}(n,h_{j})}}{ (1 + \left\|K\right\|_{\L^1(\R)} ) } 
	+  \dfrac{\sqrt{\widetilde{V}_{j}(n,h_{j})}}{ (1 + \left\|K\right\|_{\L^1(\R)} ) }
	\\
	&\leq  \left\|g_{j} - K_{h_{j}}*g_{j}\right\|_{\infty}  +   \sqrt{\widetilde{V}_{j}(n,h_{j})}.
	\end{align*}
	Therefore, on $\widetilde{A}_{2}$, we finally obtain
	\begin{align*}
	\big| \widehat{g}_{j,\widehat{h}_{j}}(u_{x})  -  g_{j}(u_{x}) \big|   &\leq   (1 + 2 \left\|K\right\|_{\L^1(\R)}). \left\|g_{j} - K_{h_{j}}*g_{j}\right\|_{\infty}  +  3.  \sqrt{\widetilde{V}_{j}(n,h_{j})} .
	\end{align*}
This concludes the proof of Proposition~\ref{adaptive:prop:upper-bound.high-Proba:g2-g1}.
\end{proof}
\subsubsection{Proof of Theorem~\ref{adaptive:thm:pointwise-risk.upper-bound:g} }
\label{sec:adaptive:thm:pointwise-risk.upper-bound:g:proof}
First, we establish a concentration result for $\widehat{g}_{1,\widehat{h}_1}(u_{x})$ and $\widehat{g}_{2,\widehat{h}_2}(u_{x})$ as follows. In the sequel, we consider $j\in\{1,2\}$ and we set
$$\psi_{n}(\beta_{j}) = \Bigg( \frac{\log(n)}{n} \Bigg)^{\frac{\beta_{j}}{2\beta_{j} + 1}}.$$
\begin{corollary}\label{adaptive:corollary:concentration:g2-g1}
Suppose that $g_{j}$ belongs to $\mathcal{H}(\beta_{j}, L_{j})$ for $\beta_{j}, L_{j}> 0$. Then, under the assumptions of Proposition~\ref{adaptive:prop:upper-bound.high-Proba:g2-g1}, for $q \geq 1$, there exists a constant $C_{j}$ depending on $\beta_{j}, L_{j}, c_{0,j}$ and $K$ such that, with
$$\widetilde{E}_{j,n}(u_{x}, \widehat{h}_{j}) \big):=\left\{\big| \widehat{g}_{j,\widehat{h}_j}(u_{x}) - g_{j}(u_{x}) \big|  \leq C_{j}. \psi_{n}(\beta_{j})\right\},$$
we have, for $n$ large enough,
	\begin{align*} 
\mathbb{P} \Big( \big( \widetilde{E}_{j,n}(u_{x}, \widehat{h}_{j}) \big)^{c} \Big)  &\leq  4.n^{-q},
\end{align*}
\end{corollary}
\begin{proof}[Proof of Corollary~\ref{adaptive:corollary:concentration:g2-g1}]
Since $g_{j}$ belongs to $\mathcal{H}(\beta_{j}, L_{j})$, from Lemma~\ref{lem:point-wise:mean-var-cov.g_{2}},  we have
$$
\left\| g_{j} - K_{h_{j}}*g_{j} \right\|_{\infty}  \leq L_{j}. h_{j}^{\beta_{j}} . C_{K, \mathcal{L}} . 
$$
	From Proposition~\ref{adaptive:prop:upper-bound.high-Proba:g2-g1}, this implies that with a probability greater than $1 - 4.n^{-q}$, one gets for any $h_{j} \in \mathcal{H}_{n}$:
	\begin{align} \label{adaptive:corollary:concentration:g2-g1:proof.bound.g1}
	\big| \widehat{g}_{j,\widehat{h}_j}(u_{x}) - g_{j}(u_{x}) \big|  \leq  \big( 1 + 2 \left\|K\right\|_{\L^{1}(\R)} \big). L_{j}. h_{j}^{\beta_{j}} . C_{K, \mathcal{L}} 
	+  3. \sqrt{\widetilde{V}_{j}(n,h_{j})} .
	\end{align}
	In \eqref{adaptive:corollary:concentration:g2-g1:proof.bound.g1}, we take $h_j$ so that $h_j^{-1}$ is an integer and $h_{j}$ is of order $ \Big( \dfrac{\log(n)}{n} \Big)^{\frac{1}{2 \beta_{j} + 1}}  $. 	Since $h_{\max}=(\log n)^{-1}$ and $1/(2\beta_j+1)< 1$, $h_j\in  \mathcal{H}_{n},$ for $n$ large enough. As a result, we obtain with probability greater than $1 - 4.n^{-q}$, that
	\begin{align*}
	\big| \widehat{g}_{j,\widehat{h}_j}(u_{x}) - g_{j}(u_{x}) \big|  \leq  C_{j}. \psi_{n}(\beta_{j})  ,
	\end{align*}
	with a constant $C_{j}$ (depending on $\beta_{j}, L_{j}, c_{0,j}$ and $K$). This concludes the proof of Corollary~\ref{adaptive:corollary:concentration:g2-g1}.
\end{proof}
Now, we start to prove  Theorem~\ref{adaptive:thm:pointwise-risk.upper-bound:g}.
\begin{proof}[Proof of Theorem~\ref{adaptive:thm:pointwise-risk.upper-bound:g}]
We have $$\E \Big[d_c \big(\widehat{m}_{\widehat{h}}(x), m(x) \big) \Big] = \E \Big[ d_c \big ( \widehat{g}_{\widehat{h}}(u_{x}), g(u_{x}) \big) \Big].$$	
We study $$R_n:=\E \left [  d_c ( \widehat{g}_{\widehat{h}}(u_{x}), g(u_{x})). \ind_{\widetilde{E}_{2,n}(u_{x}, \widehat{h}_{2}) \cap \widetilde{E}_{1,n}(u_{x}, \widehat{h}_{1})} \right ].$$
We have
\begin{align*}
R_n&=\E \Big[ \left ( 1- \cos \big(\Atan( \widehat{g}_{1,\widehat{h}_1}(u_{x}) , \widehat{g}_{2,\widehat{h}_2}(u_{x}))  -  \Atan( g_{1}(u_{x}), g_{2}(u_{x}) \big)\big) \right )   . \ind_{\widetilde{E}_{2,n}(u_{x}, \widehat{h}_{2}) \cap \widetilde{E}_{1,n}(u_{x}, \widehat{h}_{1})}  \Big]\\
&=2 \E \left [ \sin^2 \left ( \frac 1 2  \left (  \Atan( \widehat{g}_{1,\widehat{h}_1}(u_{x}) , \widehat{g}_{2,\widehat{h}_2}(u_{x}))  -  \Atan ( g_{1}(u_{x}), g_{2}(u_{x}))  \right ) \right). \ind_{\widetilde{E}_{2,n}(u_{x}, \widehat{h}_{2}) \cap \widetilde{E}_{1,n}(u_{x}, \widehat{h}_{1}) } \right].
\end{align*}
We now distinguish 3 cases.\\
\underline{Case 1: $|g_1(u_x)|>0$ and $|g_2(u_x)|>0$.}\\ We denote
$$\delta_1=|g_1(u_x)|\quad\mbox{and}\quad \delta_2=|g_2(u_x)|,$$
meaning that $\delta=\min( \delta_1,\delta_2).$
First, on the event $\widetilde{E}_{2,n}(u_{x}, \widehat{h}_{2}) \cap \widetilde{E}_{1,n}(u_{x}, \widehat{h}_{1})$, for $n$ large enough satisfying $$C_{2}. \psi_{n}(\beta_{2})  <  \delta_{2}/2\quad\mbox{and}\quad C_{1}. \psi_{n}(\beta_{1})  < \delta_{1}/2.$$ we have $$\big| \widehat{g}_{2,\widehat{h}_2}(u_{x}) - g_{2}(u_{x}) \big| \leq C_{2}. \psi_{n}(\beta_{2}) < \dfrac{\big|g_{2}(u_{x})\big|}{2}$$ and $$\big| \widehat{g}_{1,\widehat{h}_1}(u_{x}) - g_{1}(u_{x}) \big| \leq C_{1}. \psi_{n}(\beta_{1}) < \dfrac{\big|g_{1}(u_{x})\big|}{2}.$$ Thus, we get $$\widehat{g}_{2,\widehat{h}_2}(u_{x}) . g_{2}(u_{x}) > 0\quad\mbox{and}\quad \widehat{g}_{1,\widehat{h}_1}(u_{x}) . g_{1}(u_{x}) > 0.$$ Therefore,
\begin{align}	
R_n= & \hspace{0.1cm} 2 \E \left [ \sin^2 \left ( \frac 1 2  \left (   \arctan \Big( \dfrac{\widehat{g}_{1,\widehat{h}_1}(u_{x})}{\widehat{g}_{2,\widehat{h}_2}(u_{x})}  \Big)  -  \arctan \Big( \dfrac{g_{1}(u_{x})}{g_{2}(u_{x})} \Big) \right ) \right). \ind_{\widetilde{E}_{2,n}(u_{x}, \widehat{h}_{2}) \cap \widetilde{E}_{1,n}(u_{x}, \widehat{h}_{1}) } \right]    
	\nonumber \\
		\leq & \hspace{0.1cm} \frac 1 2 \E \left [  \left |     \arctan \Big( \dfrac{\widehat{g}_{1,\widehat{h}_1}(u_{x})}{\widehat{g}_{2,\widehat{h}_2}(u_{x})}  \Big)  -  \arctan \Big( \dfrac{g_{1}(u_{x})}{g_{2}(u_{x})} \Big)  \right |^2. \ind_{\widetilde{E}_{2,n}(u_{x}, \widehat{h}_{2}) \cap \widetilde{E}_{1,n}(u_{x}, \widehat{h}_{1}) } \right]    
	\nonumber  \\
	\leq& \hspace{0.1cm} \E  \Big[ \Big| \arctan \Big( \dfrac{\widehat{g}_{1,\widehat{h}_1}(u_{x})}{\widehat{g}_{2,\widehat{h}_2}(u_{x})}  \Big)  -  \arctan \Big( \dfrac{g_{1}(u_{x})}{\widehat{g}_{2,\widehat{h}_2}(u_{x})} \Big) \Big|^{2} . \ind_{\widetilde{E}_{2,n}(u_{x}, \widehat{h}_{2}) \cap \widetilde{E}_{1,n}(u_{x}, \widehat{h}_{1}) } \Big]     \label{adaptive:thm:pointwise-risk.upper-bound:g:proof:control.on.E2n}
	\\
	&+   \E \Big[ \Big| \arctan \Big( \dfrac{g_{1}(u_{x})}{\widehat{g}_{2,\widehat{h}_2}(u_{x})}  \Big)  -  \arctan \Big( \dfrac{g_{1}(u_{x})}{g_{2}(u_{x})} \Big) \Big|^{2} . \ind_{\widetilde{E}_{2,n}(u_{x}, \widehat{h}_{2}) \cap  \widetilde{E}_{1,n}(u_{x}, \widehat{h}_{1}) } \Big]  \nonumber.
	\end{align}
For $n$ sufficiently large, $\big| \widehat{g}_{2,\widehat{h}_2}(u_{x}) \big| \geq \big| g_{2}(u_{x}) \big| - \big| g_{2}(u_{x}) - \widehat{g}_{2,h}(u_{x}) \big| > \delta_{2} - C_{2}. \psi_{n}(\beta_{2}) \geq \delta_{2}/2$ on the event $\widetilde{E}_{2,n}(u_{x}, \widehat{h}_{2})$, and using the $1$-Lipschitz continuity of  $\arctan$, we get for the first term in~\eqref{adaptive:thm:pointwise-risk.upper-bound:g:proof:control.on.E2n}, since on $\widetilde{E}_{1,n}(u_{x}, \widehat{h}_{1})$ one has $\big| \widehat{g}_{1,\widehat{h}_1}(u_{x}) - g_{1}(u_{x}) \big| \leq C_{1}. \psi_{n}(\beta_{1})$
	\begin{align*}
	&\hspace{-1cm}\E \Big[ \Big| \arctan \Big( \dfrac{\widehat{g}_{1,\widehat{h}_1}(u_{x})}{\widehat{g}_{2,\widehat{h}_2}(u_{x})}  \Big)  -  \arctan \Big( \dfrac{g_{1}(u_{x})}{\widehat{g}_{2,\widehat{h}_2}(u_{x})} \Big) \Big|^{2}. \ind_{\widetilde{E}_{2,n}(u_{x}, \widehat{h}_{2}) \cap \widetilde{E}_{1,n}(u_{x}, \widehat{h}_{1}) }  \Big]
	\\
	&\leq  \dfrac{4}{\delta_{2}^2}. \E \Big[ \Big| \widehat{g}_{1,\widehat{h}_1}(u_{x}) - g_{1}(u_{x}) \Big|^{2} .\ind_{\widetilde{E}_{2,n}(u_{x}, \widehat{h}_{2}) \cap \widetilde{E}_{1,n}(u_{x}, \widehat{h}_{1}) } \Big]
	\\
	&\leq  \dfrac{4}{\delta_{2}^2}. \E \Big[  C_{1}^{2}. \psi_{n}(\beta_{1})^{2} . \ind_{ \widetilde{E}_{2,n}(u_{x}, \widehat{h}_{2}) \cap  \widetilde{E}_{1,n}(u_{x}, \widehat{h}_{1}) }  \Big]   
	\\
	&\leq  \dfrac{4}{\delta_{2}^2}.  C_{1}^{2}. \psi_{n}(\beta_{1})^{2} . \mathbb{P} \big( \widetilde{E}_{2,n}(u_{x}, \widehat{h}_{2}) \cap  \widetilde{E}_{1,n}(u_{x}, \widehat{h}_{1}) \big) \\
	&\leq  \dfrac{4}{\delta_{2}^2}.  C_{1}^{2}. \psi_{n}(\beta_{1})^{2} \leq  \dfrac{4}{\delta^2}.  C_{1}^{2}. \psi_{n}(\beta_{1})^{2}.
	\end{align*}
	Moreover, for the second term in~\eqref{adaptive:thm:pointwise-risk.upper-bound:g:proof:control.on.E2n}, since $\dfrac{g_{1}(u_{x})}{\widehat{g}_{2,\widehat{h}_2}(u_{x})}. \dfrac{g_{1}(u_{x})}{g_{2}(u_{x})} >  0$ on $\widetilde{E}_{2,n}(u_{x}, \widehat{h}_{2})$, we have 
	\begin{align*}
	&\E \Big[ \Big| \arctan \Big( \dfrac{g_{1}(u_{x})}{\widehat{g}_{2,\widehat{h}_2}(u_{x})}  \Big)  -  \arctan \Big( \dfrac{g_{1}(u_{x})}{g_{2}(u_{x})} \Big) \Big|^{2} . \ind_{\widetilde{E}_{2,n}(u_{x}, \widehat{h}_{2}) \cap  \widetilde{E}_{1,n}(u_{x}, \widehat{h}_{1}) } \Big]
	\\
	&=  \E \Big[ \Big| \arctan \Big( \dfrac{\widehat{g}_{2,\widehat{h}_2}(u_{x})}{g_{1}(u_{x})}  \Big)  -  \arctan \Big( \dfrac{g_{2}(u_{x})}{g_{1}(u_{x})} \Big) \Big| ^{2} .  \ind_{\widetilde{E}_{2,n}(u_{x}, \widehat{h}_{2}) \cap  \widetilde{E}_{1,n}(u_{x}, \widehat{h}_{1}) } \Big] 	\\
	&\leq  \dfrac{1}{\big| g_{1}(u_{x}) \big|^2}. \E \Big[ \Big|  \widehat{g}_{2,\widehat{h}_2}(u_{x}) - g_{2}(u_{x})  \Big| ^{2} .  \ind_{\widetilde{E}_{2,n}(u_{x}, \widehat{h}_{2}) \cap  \widetilde{E}_{1,n}(u_{x}, \widehat{h}_{1})} \Big]
	\\
	&\leq  \dfrac{1}{\delta_{1}^2}.  \E \Big[  C_{2}^{2}. \psi_{n}(\beta_{2})^{2} . \ind_{ \widetilde{E}_{2,n}(u_{x}, \widehat{h}_{2}) \cap  \widetilde{E}_{1,n}(u_{x}, \widehat{h}_{1}) }  \Big]\\& \leq  \dfrac{1}{\delta_{1}^2}.  C_{2}^{2}. \psi_{n}(\beta_{2})^{2}\leq  \dfrac{1}{\delta^2}.  C_{2}^{2}. \psi_{n}(\beta_{2})^{2}.
	\end{align*}
	\noindent Therefore, on the event $\widetilde{E}_{2,n}(u_{x}, \widehat{h}_{2}) \cap  \widetilde{E}_{1,n}(u_{x}, \widehat{h}_{1})$, 
	for $n$ sufficiently large such that $C_{2}. \psi_{n}(\beta_{2})  \leq  \delta_{2}/2$ and $C_{1}. \psi_{n}(\beta_{1})  \leq  \delta_{1}/2$, we obtain
	\begin{align*}
	&\hspace{-2cm} \E \Big[ \big| \widehat{g}_{\widehat{h}}(u_{x}) - g(u_{x}) \big|^{2}. \ind_{\widetilde{E}_{2,n}(u_{x}, \widehat{h}_{2}) \cap  \widetilde{E}_{1,n}(u_{x}, \widehat{h}_{1})}  \Big]  
	\\
	&\leq   \dfrac{4}{\delta^2}.  C_{1}^{2}. \psi_{n}(\beta_{1})^{2}+  \dfrac{1}{\delta^2}.  C_{2}^{2}. \psi_{n}(\beta_{2})^{2}.
	\end{align*}
	On the other hand, on the complementary $\big( \widetilde{E}_{2,n}(u_{x}, \widehat{h}_{2}) \big)^{c}  \cup  \big( \widetilde{E}_{1,n}(u_{x}, \widehat{h}_{1}) \big)^{c}$, using the fact that $\big| \Atan (w_{1},w_{2}) \big| \leq \pi$, $\forall (w_{1},w_{2})$, we can simply obtain an upper-bound as follows:
\begin{align*}
	&\E \Big[ d_c( \widehat{g}_{\widehat{h}}(u_{x}) - g(u_{x}) ). \ind_{\big( \widetilde{E}_{2,n}(u_{x}, \widehat{h}_{2}) \big)^{c}  \cup  \big( \widetilde{E}_{1,n}(u_{x}, \widehat{h}_{1}) \big)^{c} }  \Big]
	\\
	&\leq \frac 1 2  \E \Big[ \Big| \Atan \big( \widehat{g}_{1,\widehat{h}_1}(u_{x}) , \widehat{g}_{2,\widehat{h}_2}(u_{x})\big)  -  \Atan \big( g_{1}(u_{x}), g_{2}(u_{x}) \big)  \Big|^{2} . \ind_{\big( \widetilde{E}_{2,n}(u_{x}, \widehat{h}_{2}) \big)^{c}  \cup  \big( \widetilde{E}_{1,n}(u_{x}, \widehat{h}_{1}) \big)^{c} }  \Big]
	\\
	&\leq  2 \pi^{2}. \mathbb{P} \left( \big( \widetilde{E}_{2,n}(u_{x}, \widehat{h}_{2}) \big)^{c} \right)   +  2 \pi^{2}. \mathbb{P} \left( \big( \widetilde{E}_{1,n}(u_{x}, \widehat{h}_{1}) \big)^{c} \right) \leq   4 \pi^{2}. 4. n^{-q}, 
\end{align*}  
by Corollary~\ref{adaptive:corollary:concentration:g2-g1}.
For $q \geq 1$, we get that $n^{-q}$ is negligible in comparison with $C_{1}^{2}. \psi_{n}(\beta_{1})^{2} = C_{1}^{2}. \Big( \dfrac{\log (n)}{n} \Big)^{\frac{2\beta_{1}}{2\beta_{1} + 1}}$ and $C_{2}^{2}. \psi_{n}(\beta_{2})^{2} = C_{2}^{2}. \Big( \dfrac{\log (n)}{n} \Big)^{\frac{2\beta_{2}}{2\beta_{2} + 1}}$. 
	
\bigskip

\noindent
\underline{Case 2: $g_1(u_x)=0$ and $|g_2(u_x)|>0$.}\\ 
In this case $\delta=|g_2(u_x)|$.\\
- If $g_2(u_x)>0$, then, as previously, on the event $\widetilde{E}_{2,n}(u_{x}, \widehat{h}_{2}) \cap \widetilde{E}_{1,n}(u_{x}, \widehat{h}_{1})$, for $n$ large enough, $\widehat{g}_{2,\widehat{h}_2}(u_{x})>0$. Then,
\begin{align*}
R_n&=2 \E \left [ \sin^2 \left ( \frac 1 2  \left (  \Atan \big( \widehat{g}_{1,\widehat{h}_1}(u_{x}) , \widehat{g}_{2,\widehat{h}_2}(u_{x})\big)  -  \Atan \big( g_{1}(u_{x}), g_{2}(u_{x})\big)  \right ) \right). \ind_{\widetilde{E}_{2,n}(u_{x}, \widehat{h}_{2}) \cap \widetilde{E}_{1,n}(u_{x}, \widehat{h}_{1}) } \right]\\
&=2 \E \left [ \sin^2 \left ( \frac 1 2  \left (  \Atan \big( \widehat{g}_{1,\widehat{h}_1}(u_{x}) , \widehat{g}_{2,\widehat{h}_2}(u_{x})\big)  -  0  \right ) \right). \ind_{\widetilde{E}_{2,n}(u_{x}, \widehat{h}_{2}) \cap \widetilde{E}_{1,n}(u_{x}, \widehat{h}_{1}) } \right]\\
&=2 \E \left [ \sin^2 \left ( \frac 1 2  \left (  \arctan \Bigg( \dfrac{\widehat{g}_{1,\widehat{h}_1}(u_{x})}{\widehat{g}_{2,\widehat{h}_2}(u_{x})}  \Bigg) -\arctan \Bigg( \dfrac{ g_{1}(u_{x})}{\widehat{g}_{2,\widehat{h}_2}(u_{x})}  \Bigg)\right ) \right). \ind_{\widetilde{E}_{2,n}(u_{x}, \widehat{h}_{2}) \cap \widetilde{E}_{1,n}(u_{x}, \widehat{h}_{1}) } \right]\\
&\leq\frac{1}{2}\E  \Big[ \Big| \arctan \Big( \dfrac{\widehat{g}_{1,\widehat{h}_1}(u_{x})}{\widehat{g}_{2,\widehat{h}_2}(u_{x})}  \Big)  -  \arctan \Big( \dfrac{g_{1}(u_{x})}{\widehat{g}_{2,\widehat{h}_2}(u_{x})} \Big) \Big|^{2} . \ind_{\widetilde{E}_{2,n}(u_{x}, \widehat{h}_{2}) \cap \widetilde{E}_{1,n}(u_{x}, \widehat{h}_{1}) } \Big],     
\end{align*}
and we conclude as for the first case.\\
- If $g_2(u_x)<0$, then, as previously, on the event $\widetilde{E}_{2,n}(u_{x}, \widehat{h}_{2}) \cap \widetilde{E}_{1,n}(u_{x}, \widehat{h}_{1})$, for $n$ large enough, $\widehat{g}_{2,\widehat{h}_2}(u_{x})<0$. Then,
\begin{align*}
R_n&=2 \E \left [ \sin^2 \left ( \frac 1 2  \left (  \Atan \big( \widehat{g}_{1,\widehat{h}_1}(u_{x}) , \widehat{g}_{2,\widehat{h}_2}(u_{x})\big)  -  \Atan \big( g_{1}(u_{x}), g_{2}(u_{x})\big)  \right ) \right). \ind_{\widetilde{E}_{2,n}(u_{x}, \widehat{h}_{2}) \cap \widetilde{E}_{1,n}(u_{x}, \widehat{h}_{1}) } \right]\\
&=2 \E \left [ \sin^2 \left ( \frac 1 2  \left (  \Atan \big( \widehat{g}_{1,\widehat{h}_1}(u_{x}) , \widehat{g}_{2,\widehat{h}_2}(u_{x})\big)  +\pi  \right ) \right). \ind_{\widetilde{E}_{2,n}(u_{x}, \widehat{h}_{2}) \cap \widetilde{E}_{1,n}(u_{x}, \widehat{h}_{1}) } \right]\\
&=2 \E \left [ \sin^2 \left ( \frac 1 2  \left (  \arctan \Bigg( \dfrac{\widehat{g}_{1,\widehat{h}_1}(u_{x})}{\widehat{g}_{2,\widehat{h}_2}(u_{x})}  \Bigg)+2\pi1_{\{\widehat{g}_{1,\widehat{h}_1}(u_{x})> 0\}}  -\arctan \Bigg( \dfrac{ g_{1}(u_{x})}{\widehat{g}_{2,\widehat{h}_2}(u_{x})}  \Bigg)\right ) \right). \ind_{\widetilde{E}_{2,n}(u_{x}, \widehat{h}_{2}) \cap \widetilde{E}_{1,n}(u_{x}, \widehat{h}_{1}) }\right]\\
&=2 \E \left [ \sin^2 \left ( \frac 1 2  \left (  \arctan \Bigg( \dfrac{\widehat{g}_{1,\widehat{h}_1}(u_{x})}{\widehat{g}_{2,\widehat{h}_2}(u_{x})}  \Bigg) -\arctan \Bigg( \dfrac{ g_{1}(u_{x})}{\widehat{g}_{2,\widehat{h}_2}(u_{x})}  \Bigg)\right ) \right). \ind_{\widetilde{E}_{2,n}(u_{x}, \widehat{h}_{2}) \cap \widetilde{E}_{1,n}(u_{x}, \widehat{h}_{1}) } \right]\\
&\leq\frac{1}{2}\E  \Big[ \Big| \arctan \Big( \dfrac{\widehat{g}_{1,\widehat{h}_1}(u_{x})}{\widehat{g}_{2,\widehat{h}_2}(u_{x})}  \Big)  -  \arctan \Big( \dfrac{g_{1}(u_{x})}{\widehat{g}_{2,\widehat{h}_2}(u_{x})} \Big) \Big|^{2} . \ind_{\widetilde{E}_{2,n}(u_{x}, \widehat{h}_{2}) \cap \widetilde{E}_{1,n}(u_{x}, \widehat{h}_{1}) } \Big],  
\end{align*}
and we conclude as for the first case.

\bigskip

\noindent
\underline{Case 3: $|g_1(u_x)|>0$ and $g_2(u_x)=0$.}\\ 
In this case $\delta=|g_1(u_x)|$.\\
- If $g_1(u_x)>0$, then, as previously, on the event $\widetilde{E}_{2,n}(u_{x}, \widehat{h}_{2}) \cap \widetilde{E}_{1,n}(u_{x}, \widehat{h}_{1})$, for $n$ large enough, $\widehat{g}_{1,\widehat{h}_1}(u_{x})>0$. Then,
\begin{align*}
R_n&=2 \E \left [ \sin^2 \left ( \frac 1 2  \left (  \Atan \big( \widehat{g}_{1,\widehat{h}_1}(u_{x}) , \widehat{g}_{2,\widehat{h}_2}(u_{x})\big)  -  \Atan \big( g_{1}(u_{x}), g_{2}(u_{x})\big)  \right ) \right). \ind_{\widetilde{E}_{2,n}(u_{x}, \widehat{h}_{2}) \cap \widetilde{E}_{1,n}(u_{x}, \widehat{h}_{1}) } \right]\\
&=2 \E \left [ \sin^2 \left ( \frac 1 2  \left (  \arctan \Bigg( \dfrac{\widehat{g}_{1,\widehat{h}_1}(u_{x})}{\widehat{g}_{2,\widehat{h}_2}(u_{x})}  \Bigg)+\pi1_{\{\widehat{g}_{2,\widehat{h}_2}(u_{x})<0\}}  -  \frac{\pi}{2}  \right ) \right). \ind_{\widetilde{E}_{2,n}(u_{x}, \widehat{h}_{2}) \cap \widetilde{E}_{1,n}(u_{x}, \widehat{h}_{1}) } \right]\\
  &=2 \E \left [ \sin^2 \left ( \frac 1 2  \left (  \arctan \Bigg( \dfrac{\widehat{g}_{2,\widehat{h}_2}(u_{x})}{\widehat{g}_{1,\widehat{h}_1}(u_{x})}  \Bigg) -\arctan \Bigg( \dfrac{ g_{2}(u_{x})}{\widehat{g}_{1,\widehat{h}_1}(u_{x})}  \Bigg)\right ) \right). \ind_{\widetilde{E}_{2,n}(u_{x}, \widehat{h}_{2}) \cap \widetilde{E}_{1,n}(u_{x}, \widehat{h}_{1}) } \right],
\end{align*}
where the last equality is obtained by using for $x\in\R^*$, 
$$\arctan(x)+\arctan(1/x)=\pi/2\times sign(x)$$ and by distinguishing the cases according to the sign of $\widehat{g}_{2,\widehat{h}_2}(u_{x})$.\\
- If $g_1(u_x)<0$, then, as previously, on the event $\widetilde{E}_{2,n}(u_{x}, \widehat{h}_{2}) \cap \widetilde{E}_{1,n}(u_{x}, \widehat{h}_{1})$, for $n$ large enough, $\widehat{g}_{1,\widehat{h}_1}(u_{x})<0$. Then, similarly,
\begin{align*}
R_n&=2 \E \left [ \sin^2 \left ( \frac 1 2  \left (  \Atan \big( \widehat{g}_{1,\widehat{h}_1}(u_{x}) , \widehat{g}_{2,\widehat{h}_2}(u_{x})\big)  -  \Atan \big( g_{1}(u_{x}), g_{2}(u_{x})\big)  \right ) \right). \ind_{\widetilde{E}_{2,n}(u_{x}, \widehat{h}_{2}) \cap \widetilde{E}_{1,n}(u_{x}, \widehat{h}_{1}) } \right]\\
&=2 \E \left [ \sin^2 \left ( \frac 1 2  \left (  \arctan \Bigg( \dfrac{\widehat{g}_{1,\widehat{h}_1}(u_{x})}{\widehat{g}_{2,\widehat{h}_2}(u_{x})}  \Bigg)-\pi1_{\{\widehat{g}_{2,\widehat{h}_2}(u_{x})<0\}}  + \frac{\pi}{2}  \right ) \right). \ind_{\widetilde{E}_{2,n}(u_{x}, \widehat{h}_{2}) \cap \widetilde{E}_{1,n}(u_{x}, \widehat{h}_{1}) } \right]\\
  &=2 \E \left [ \sin^2 \left ( \frac 1 2  \left (  \arctan \Bigg( \dfrac{\widehat{g}_{2,\widehat{h}_2}(u_{x})}{\widehat{g}_{1,\widehat{h}_1}(u_{x})}  \Bigg) -\arctan \Bigg( \dfrac{ g_{2}(u_{x})}{\widehat{g}_{1,\widehat{h}_1}(u_{x})}  \Bigg)\right ) \right). \ind_{\widetilde{E}_{2,n}(u_{x}, \widehat{h}_{2}) \cap \widetilde{E}_{1,n}(u_{x}, \widehat{h}_{1}) } \right].
\end{align*}
We conclude by using the second case since $\widehat{g}_{1,\widehat{h}_1}(u_{x})$ (resp. $g_{1}(u_{x})$) and $\widehat{g}_{2,\widehat{h}_2}(u_{x})$ (resp. $g_{2}(u_{x})$) play a symmetric role.

\bigskip

Note that under Assumption~\ref{assumption:c_{low}}, the case $g_1(u_x)=g_2(u_x)=0$ cannot occur.

\end{proof}
\subsubsection{Proof of Theorem~\ref{lower-bound}}
Before tackling the proof of Theorem \ref{lower-bound}, the next lemma shows that the von Mises density satisfies condition (\ref{condition-erreur}).
\begin{lem}\label{preuve-condition-erreur}
The von Mises density with location parameter $\mu$ and concentration parameter $\kappa$ satisfies condition (\ref{condition-erreur}).
\end{lem}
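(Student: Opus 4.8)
The plan is to evaluate the left-hand side of~\eqref{condition-erreur} \emph{exactly} for the von Mises density and to recognise that it is proportional to $1-\cos y$, which is $O(y^2)$ near $0$. First I would reduce to the case of zero mean direction. Writing $p_\zeta(u) = \frac{1}{2\pi I_0(\kappa)}\exp(\kappa\cos(u-\mu))$ as in~\eqref{vmdensity}, the change of variable $u\mapsto u-\mu$ on the circle is measure-preserving and carries $p_\zeta$ to the centred von Mises density while leaving the integral $\int p_\zeta(u)\log\frac{p_\zeta(u)}{p_\zeta(u+y)}\,du$ unchanged; hence it suffices to treat $\mu=0$, so that $p_\zeta(u)=\frac{1}{2\pi I_0(\kappa)}\exp(\kappa\cos u)$ on $[-\pi,\pi)$.

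Next I would exploit the exponential form. Since only the exponent depends on $u$, the normalising constant cancels in the ratio and one gets the exact identity $\log\frac{p_\zeta(u)}{p_\zeta(u+y)}=\kappa\big(\cos u-\cos(u+y)\big)$. Expanding $\cos(u+y)=\cos u\cos y-\sin u\sin y$ yields $\cos u-\cos(u+y)=(1-\cos y)\cos u+\sin y\,\sin u$, so that
\begin{equation*}
\int p_\zeta(u)\log\frac{p_\zeta(u)}{p_\zeta(u+y)}\,du
=\kappa(1-\cos y)\int p_\zeta(u)\cos u\,du+\kappa\sin y\int p_\zeta(u)\sin u\,du.
\end{equation*}
The second integral vanishes because $p_\zeta$ is even and $\sin$ is odd, while the first is the mean resultant length $\int p_\zeta(u)\cos u\,du=I_1(\kappa)/I_0(\kappa)\in(0,1)$, using the Bessel identity $\int_{-\pi}^{\pi}e^{\kappa\cos u}\cos u\,du=2\pi I_1(\kappa)$ (a special case of the integral representation in footnote~\ref{circular-regerssion:footnote:Bessel.func}). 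Therefore the quantity equals exactly $\kappa\,\frac{I_1(\kappa)}{I_0(\kappa)}\,(1-\cos y)$.

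Finally I would invoke the elementary bound $1-\cos y=2\sin^2(y/2)\le y^2/2$, valid for every $y$, to conclude that the integral is at most $\frac{\kappa I_1(\kappa)}{2I_0(\kappa)}\,y^2$. Condition~\eqref{condition-erreur} then holds with $p_*:=\frac{\kappa I_1(\kappa)}{2I_0(\kappa)}>0$ and any $y_0>0$ (for instance $y_0=\pi$), and the reduction above shows the same constant works for arbitrary mean direction $\mu$. I do not expect any genuine obstacle here: the log-ratio is \emph{affine} in $\cos u$ and $\sin u$, so the computation closes in one line once the first trigonometric moment and the parity symmetry are used; the only points requiring a word of care are the rotation invariance that removes $\mu$ and the identification of $\int p_\zeta\cos u\,du$ with $I_1(\kappa)/I_0(\kappa)$.
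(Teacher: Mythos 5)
Your proof is correct and follows essentially the same route as the paper's: both evaluate the integral exactly as $\kappa(1-\cos y)$ (equivalently $2\kappa\sin^2(y/2)$) times the first trigonometric moment, kill the sine term by parity, and bound $1-\cos y$ by $y^2/2$. The only cosmetic differences are your explicit reduction to $\mu=0$ and your identification of the positive constant as $I_1(\kappa)/I_0(\kappa)$, which the paper simply leaves as $c(\kappa)C(\kappa)$; the resulting $p_*$ is the same.
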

\begin{proof}[Proof of Lemma~\ref{preuve-condition-erreur}]
	We recall the expression of the von Mises density with location parameter  $\mu\in [-\pi, \pi)$ and concentration parameter $\kappa>0$:
	$$
	f_{vM(\mu, \kappa)}(\theta)=c(\kappa)e^{\kappa \cos(\theta-\mu)},\quad \theta\in[-\pi,\pi)
	$$
	with $c(\kappa)$ the normalizing constant. Let us prove that $f_{vM(\mu, \kappa)}$ satisfies:
	\begin{equation*}\label{hyp1}
	\exists\, p_*>0: \int_{-\pi}^\pi f_{vM(\mu, \kappa)}(t) \log \frac{f_{vM(\mu, \kappa)}(t)}{f_{vM(\mu, \kappa)}(t+\theta)} dt \leq p_* \theta^2,
	\end{equation*}
	for all $\theta\in\R$.
	We have that 
	\begin{eqnarray*}
		\int_{-\pi}^\pi f_{vM(\mu, \kappa)}(t) \log \frac{f_{vM(\mu, \kappa)}(t)}{f_{vM(\mu, \kappa)}(t+\theta)} dt &=& c(\kappa)\kappa \int_{-\pi}^{\pi}  e^{\kappa \cos(t-\mu)} (\cos (t-\mu) - \cos(t+\theta-\mu)) dt 
		\\
		&=&2 c(\kappa)\kappa \sin \frac \theta 2 \int_{-\pi}^{\pi}  e^{\kappa \cos (t-\mu)} \sin (t-\mu + \frac \theta 2) dt 
		\\
		&=& 2 c(\kappa)\kappa \sin \frac \theta 2 \int_{-\pi}^{\pi}  e^{\kappa \cos (t-\mu)} \Big( \sin (t-\mu) \cos \frac \theta 2 
		\\
		&\textrm{}& \hspace{4cm} + \sin \frac \theta 2 \cos (t-\mu)\Big) dt 
		\\
		&=& 2 c(\kappa)\kappa \Big(\sin \frac \theta 2\Big)^2 \underbrace{\int_{-\pi}^{\pi} e^{\kappa \cos (t-\mu)} \cos (t-\mu)dt}_{=: C(\kappa)>0} 
		\\
		& \leq & 2c(\kappa)\kappa C(\kappa)\frac{\theta^2}{4}
	\end{eqnarray*}
	for all $\theta\in\R$. Then, with $$p_*= \frac{c(\kappa)\kappa C(\kappa)}{2},$$ we have for any $\theta\in\R$,
	$$\int_{-\pi}^\pi f_{vM(\mu, \kappa)}(t) \log \frac{f_{vM(\mu, \kappa)}(t)}{f_{vM(\mu, \kappa)}(t+\theta)} dt \leq p_* \theta^2.$$
\end{proof}
\begin{proof}[Proof of Theorem~\ref{lower-bound}]
	To prove the lower bound stated in Theorem \ref{lower-bound}, we follow the lines of Section 2.5  in \cite{book:Tsybakov2009} for the regression at a point. The differences with our problem lie in the circular response and the randomness of the $X_i$'s. 
	We consider $m_0(t)=0$ and $m_1(t)=Lh_n^\beta K(\frac{t-x}{h_n})$ with $h_n=c_0n^{-\frac{1}{2\beta+1}}$
	and $K: \R \longmapsto \mathbb{S}^1$ satisfying:
	$$
	K \in \tilde \Sigma(\beta, 1/2) \cap C^{\infty}(\R), \quad K(t)>0 \iff t \in ]-1/2, 1/2[.
	$$
	Such functions $K$ exist. For instance,  for a sufficiently small $a>0$, one can take 
	$$
	K(t)=a  \exp \left (-\frac{1}{1-4t^2} \right ) 1_{[-0.5;0.5]}(t).$$
	We have now to check three points which are developed in the sequel. 
	\begin{enumerate}
		\item Let us prove that $m_1 \in \tilde \Sigma(\beta,L)$ (the function $m_0$ obviously belongs to $\tilde \Sigma(\beta,L)$).
		For $l = \lfloor \beta \rfloor$ we have 
		$$
		m_1^{(l)}(t)= L h_n^{\beta-l} K^{(l)}\left(\frac{t-x}{h_n} \right)
		$$
		then, with $u=\frac{t-x}{h_n}$ and $u'=\frac{t'-x}{h_n}$, 
	\begin{eqnarray*}
			d_c(m_1^{(l)}(t),m_1^{(l)}(t')) &=& 1-\cos(m_1^{(l)}(t)- m_1^{(l)}(t')) \\
			& \leq & 2 \sin^2((m_1^{(l)}(t)- m_1^{(l)}(t'))/2) \\
			&\leq &  \frac 1 2  |m_1^{(l)}(t)- m_1^{(l)}(t')|^2 \\
			&=& \frac{L^2}{2} h_n^{2(\beta-l)} |K^{(l)}(u) -K^{(l)}(u')|^2 \\
			& \leq & \frac{L^2}{8} h_n^{2(\beta -l)} |u-u'|^{2(\beta-l)} = \frac{L^2}{8} |t-t'|^{2(\beta-l)}.
		\end{eqnarray*}
		Then, $m_1 \in \tilde \Sigma(\beta,L)$.
		\item Let us show that  $d_c(m_0(x),m_1(x)) \geq 4 s_n^2$. \\
		We have that $m_1(x) = Lh_n^\beta K(0)=Lc_0^\beta K(0)n^{-\frac{\beta}{2\beta+1}}$, hence for $n$ sufficiently large, $m_1(x) \in [0, \frac \pi 2]$. Hence using that $\sin(t) \geq \frac 2 \pi t$ for $t \in[0, \frac \pi 2]$, we get
		\begin{eqnarray*}
		d_c(m_0(x),m_1(x)) &=& 1-\cos(m_1(x)) = 2\sin^2(m_1(x)/2) \\
		& \geq &2 \left (\frac 2 \pi \right )^2 \frac{m_1^2(x)}{4} \\
		&= & \frac {2}{\pi ^2} L^2c_0^{2\beta} K^2(0) n^{-\frac{2\beta}{2\beta +1}}
		\end{eqnarray*}
		then the condition is fulfilled with $s_n=\frac{ 1 }{ \sqrt{2} \pi } L c_0^\beta K(0) n^{-\frac{\beta}{2 \beta +1}}=: A\psi_n.$ 
		\item Using the classical reduction to a two test hypotheses problem for the pointwise regression problem, we get for any estimator $T_n$:
\begin{eqnarray}\label{reduction}
		&\textrm{}& \hspace{-1.5cm} \sup_{m\in \tilde \Sigma(\beta,L)} \E_m[\psi_n^{-2} d_c(T_n,m(x))]
		\\
		&\geq& A^2 \max_{m \in \{ m_{0}, m_{1}\}} \mathbb{P}_m ( d_c(T_n,m(x)) \geq A^2 {\psi_n}^2) \nonumber \\
		&\geq & \frac{A^2}{ 2} \E_{X_1,\dots, X_n}  \Big [ \mathbb{P}_{m_{0}}(d_c( T_n, m_0(x)) \geq A^2 \psi_n^2 | X_1, \dots, X_n) \nonumber \\
		&&\hspace{3cm}+ \mathbb{P}_{m_{1}}(d_c( T_n, m_1(x)) \geq A^2 \psi_n^2 | X_1, \dots, X_n) \Big ] \nonumber \\
		& \geq & \frac{A^2}{2}  \E_{X_1,\dots, X_n} \Big[\inf_{\psi}\Big\{\mathbb{P}_{m_0}( \psi \neq 0 | X_1, \dots, X_n)  \nonumber \\
		&&\hspace{3cm}+ \mathbb{P}_{m_1}( \psi \neq 1| X_1, \dots, X_n)\Big\}\Big], 
		\end{eqnarray}
		where $\inf_{\psi}$ denotes the infimum over all tests $\psi$ taking values in $\{0,1\}$. We have used that $\sqrt{d_c}$ is a true distance on $\mathbb{S}^1$, so that it satisfies the triangular inequality.  
		
		Now let us fix the $X_i$'s. The minimum average probability $\overline p_{e,1}$ is defined as (see page 116 in \cite{book:Tsybakov2009}):
		$$
		\overline p_{e,1}:= \frac 1 2 \inf_{\psi} \Big\{\mathbb{P}_{m_0}( \psi \neq 0 | X_1, \dots, X_n) + \mathbb{P}_{m_1}( \psi \neq 1| X_1, \dots, X_n)\Big\}.
		$$
		
		We have for the Kullback Leibler divergence (still with the $X_i$'s fixed)
		\begin{equation}\label{Kullback}
		\bold{K}(\mathbb{P}_{m_0}, \mathbb{P}_{m_1}) =\int \log \left(\frac{d\mathbb{P}_{m_0}}{d\mathbb{P}_{m_1}}\right)d\mathbb{P}_{m_0} =\sum_{i=1}^n  \int \log \frac{p_\zeta(y)}{p_\zeta(y-m_1(X_i))}p_{\zeta}(y) dy.
		\end{equation}
		
		There exists $n_0$ such that $\forall n>n_0$, $Lh_n^{\beta} K_{\max} \leq y_0$ where $K_{\max} = \max_t |K(t)|$. Using (\ref{Kullback}) and (\ref{condition-erreur}),  we have:
		\begin{eqnarray*}
			\bold{K}(\mathbb{P}_{m_0}, \mathbb{P}_{m_1})
						& \leq & p_* \sum_{i=1}^n    m_1^2(X_i) \\
			& \leq & p_*L^2  h_n^{2\beta} K^2_{\max}   \sum_{i=1}^n 1_{ \left\{\left |\frac{X_i-x}{h_n} \right | \leq \frac 1 2\right\}}. \\
		\end{eqnarray*}
		Now taking the expectation and using that the density of the $X_i$'s is bounded by $\mu_0$, we get:
		\begin{align*}
		\E_{X_1,\dots, X_n} \bold{K}(\mathbb{P}_{m_0}, \mathbb{P}_{m_1}) &\leq p_*L^2   K^2_{\max}  h_n^{2\beta} n \mathbb{P} \left (\left  |\frac{X_1-x}{h_n} \right | \leq \frac 1 2 \right  ) 
		\\
		&\leq p_*L^2 K^2_{\max} \mu_0 h_n^{2\beta+1} n.
		\end{align*}
		For $\alpha<2\log(2)$, since $h_n=c_0n^{-\frac{1}{2\beta+1}}$, setting 
		$$
		c_0 =\left ( \frac{\alpha}{p_*\mu_0L^2 K^2_{\max}}\right )^{\frac{1}{2\beta +1}},
		$$ 
		we get that 
		$$
		\E_{X_1,\dots, X_n} \bold{K}(\mathbb{P}_{m_0}, \mathbb{P}_{m_1}) \leq \alpha.
		$$
		{As in Lemma~2.10 of \cite{book:Tsybakov2009}, we introduce the function $\mathcal{H}(t)=-t\log(t)-(1-t)\log(1-t)$ for $t\in(0,1)$ and $\mathcal{H}(0)= \mathcal{H}(1)=0$. Inequality (2.70) of \cite{book:Tsybakov2009} with $M=1$ gives
			$$\E_{X_1,\dots, X_n}[\mathcal{H}(\overline p_{e,1})]\geq \log(2)-\frac{1}{2}\E_{X_1,\dots, X_n} \bold{K}(\mathbb{P}_{m_0}, \mathbb{P}_{m_1})\geq \log(2)-\frac{\alpha}{2},$$
			since $\E_{X_1,\dots, X_n} \bold{K}(\mathbb{P}_{m_0}, \mathbb{P}_{m_1}) \leq \alpha$. Since $\mathcal{H}$ is concave, $\mathcal{H}(\E_{X_1,\dots, X_n}[\overline p_{e,1})])\geq \E_{X_1,\dots, X_n}[\mathcal{H}(\overline p_{e,1})]$ and
			$$
			\E_{X_1,\dots, X_n} [  \overline p_{e,1}] \geq  \mathcal{H}^{-1}\big(\log 2 -\frac \alpha 2\big)>0,
			$$
			with, for any $t>0$, $\mathcal{H}^{-1}(t)=\min \{ p \in (0, \frac 1 2 ] : \mathcal{H}(p) \geq t \}$. Hence we deduce  using (\ref{reduction})
			\begin{eqnarray*}
				\sup_{m\in \tilde \Sigma(\beta,L)} \E_m\big[\psi_n^{-2} |d_c(T_n,m(x))\big]\geq A^2  \mathcal{H}^{-1}\big(\log 2 -\frac \alpha 2\big),
			\end{eqnarray*}  
			where the right hand side is a positive constant. This concludes the proof of Theorem~\ref{lower-bound}.
		} 
	\end{enumerate}
\end{proof}
\section{Conclusion}
Considering nonparametric regression for circular data, we derive minimax convergence rates and prove near optimal properties of our kernel estimate combined with a warping strategy on anisotropic H\"older classes of functions for pointwise estimation. The bandwidth parameter is selected by using a data-driven Goldenshluger-Lepski type procedure. After tuning hyperparameters of our estimate, we show that it remains very competitive with respect to existing methods.

As a natural extension, it could be very challenging to investigate our regression problem with a response on the sphere $\mathbb{S}^2$ or more generally on the unit hypersphere $\mathbb{S}^{d-1}$. The case of predictors $X \in \mathbb{S}^{d-1}$ and a response $\Theta \in \mathbb{S}^{d-1}$ has been tackled in~\cite{Marzio-Panzera-Taylor:2014}.
The spherical context is of course more complicated than the circular one and the arctangent function approach used here is not easily generalizable in the spherical setting. In~\cite{Marzio-Panzera-Taylor:2014}, Di Marzio et al. proposed a local constant estimator by smoothing on each component of the response. Once again no rates of convergence were obtained. Hence, in a future work, a first task would be to obtain convergence rates and then investigate adaptation issue. 

%
%
%
%
%
%
\begin{acks}[Acknowledgments]
The authors would like to warmly thank the anonymous referees for very valuable comments and suggestions.
\end{acks}
%

\end{document}